\documentclass[12pt]{amsart}
\usepackage{amssymb,amsmath,latexsym, amscd, graphicx, url}
\usepackage{amsmath}
\usepackage{ amssymb }
\usepackage{ tipa }
\usepackage{tikz}
\usepackage{tikz-cd}
\usepackage{centernot}
\usepackage{mathtools}
\usepackage{ stmaryrd }
\usepackage{ mathrsfs }
\usepackage{fullpage}
\usepackage{hyperref}
\usepackage{cancel}
\usepackage{leftidx}
\usepackage{graphicx}
\usepackage{ upgreek }
\usepackage{graphicx, subcaption}
\newcommand*\tageq{\refstepcounter{equation}\tag{\theequation}}

\usetikzlibrary{matrix,arrows}
\hypersetup{colorlinks=true, linktoc=all, linkcolor=blue}

\newtheorem{theorem}{Theorem}[section]
\newtheorem{lemma}[theorem]{Lemma}

\newtheorem{proposition}[theorem]{Proposition}
\newtheorem{definition}[theorem]{Definition}

\newtheorem{assumptions} [theorem]{Assumptions}
\newtheorem{construction}[theorem]{Construction}
\theoremstyle{definition}
\newtheorem{remark}[theorem]{Remark}
\newtheorem{example}[theorem]{Example}

    \counterwithin{figure}{section}
\title[Cluster pictures for Hitchin fibers of rank two Higgs bundles]{Cluster pictures for Hitchin fibers of rank two Higgs bundles  \\ \vspace*{10pt} }
\author{Sina Zabanfahm} 
\begin{document}
\maketitle
\begin{abstract}
    Let $\varphi\colon X\rightarrow Y$ be a degree two Galois cover of smooth curves over a local field $F$ of odd characteristic. Assuming that $Y$ has good reduction, we describe a semi-stability criterion for the curve $X$, using the data of the branch locus of the covering $\varphi$. In the case that $X$ has semi-stable reduction, we describe the dual graph of the minimal regular model of $X$ over $F.$ We do this by adopting the notion of cluster picture defined for hyperelliptic curves for the case where $Y$ is not necessarily a rational curve. Using these results, we describe the variation of the p-adic volume of Hitchin fibers over the semi-stable locus of the moduli space of rank 2 twisted Higgs bundles.        
\end{abstract}
\tableofcontents
\section{Introduction}
    Let $F$ be a local field of odd characteristic and let $v\colon \mathcal{O}_F\rightarrow \mathbb{Z}$ denote the normalized valuation of the corresponding ring of integers. Assume that $\varphi\colon X\rightarrow Y$ is a degree two Galois cover of smooth curves over $F$, with the property that $Y$ has positive genus and has good reduction over $F$. Throughout this paper, we study some of the arithmetic properties of the curve $X$ by considering the cluster picture associated to the cover $\varphi\colon X\rightarrow Y$. In particular, we determine the reduction type of $X$ over $F$, and in the case that $X$ has  semi-stable reduction, we describe the dual graph of the minimal regular model of $X$ over a maximally unramified extension $F^{ur}/F$. In the case that $X$ is semi-stable, after describing the action of the absolute Galois group $G_F$ on the irreducible components of the minimal regular model of $X_{F^{ur}}$, we then compute the Tamagawa number of $Jac(X).$ For the case that $Y$ is a rational curve, these properties are described in the work of Tim and Vladimir Dokchitser, C\'eline Maistret and Adam Morgan in \cite{DDMM}.\par
    Let $C/F$ be a hyperelliptic curve given by Weierstrass equation
    \[
        C:\quad y^2=f(x)=c_f\prod_{r\in \mathcal{R}} (x-r).
    \]
    where $\mathcal{R}\subset \overline{F}$ denotes the set of roots of $f(x)$ over an algebraic closure $\overline{F}/F$ and $c_f$ is the leading coefficient of $f(x).$ The cluster picture associated to this hyperelliptic curve, as given in \cite{DDMM} is a collection of nested subsets of $\mathcal{R}$, which describes the $p-$adic distances of roots in $\mathcal{R}.$ The combinatorial data of this cluster picture, together with the valuation of the leading coefficient and the action of the Galois group, determines the reduction type of $C$ (\cite[Theorem 1.8]{DDMM}). Moreover, in the case that $C$ has semi-stable reduction, the combinatorial data captured by the cluster picture, gives a description of the dual graph of the minimal regular model of $C$ (\cite[Propostion 5.12]{DDMM}).\par
    In this paper, we consider degree two covers $\varphi\colon X\rightarrow Y$, where $Y$ admits a proper smooth model and has non-zero genus.
    Note that the reduction type of curves are not affected by unramified extension. Hence, in Section \ref{Subsection:semi-stabilty-criterion} we assume that $F$ is maximally unramified. In particular, the residue field of $\mathcal{O}_F$ is algebraically closed. The semi-stability criterion for $X$ is given by using the notion of the cluster picture associated to a degree two covering. Let $B\subset Y$ denote the branch locus of the covering $\varphi$ and let $L=F(B)$, be the compositum of the residue fields of closed points in $B.$ In the lack of defining equation for $X$ over $Y$, the cluster picture associated to this cover is a nested collection of subsets of $B(L).$ Each point in $B(L)$, can be viewed as a horizontal divisor inside the minimal regular model of $Y$. We denote the regular model of $Y/F$ by $\mathcal{Y}^{min}.$ These horizontal divisors intersect the special fiber of $\mathcal{Y}^{min}$ at closed points. The cluster picture associated to $\varphi$, determines the number of minimal blowups necessary for separating these horizontal divisors. In the case that $X$ is semi-stable, we describe the special fiber of $X$ and consequently we describe the dual graph of the minimal regular model of $X.$\par
    Assuming that $X$ is semi-stable, in Section \ref{Section: Galois action}, we provide an algorithm for computing the Tamagawa number of $Jac(X)$. To do this, we follow the approach of A. Betts as given in \cite{BETTS}. We briefly summarize this approach here. Let $Gr(X)$ denote the dual graph of $X$. Denote the first graph homology of $Gr(X)$ by $\Lambda:=H_1(Gr(X),\mathbb{Z})$. The length pairing on the edges of $Gr(X)$ induces an injection $\Lambda\hookrightarrow \Lambda^{\vee}.$ Define $\Upphi_{Gr(X)}:=\Lambda^{\vee}/\Lambda$. The action of $G(F^{ur}/F)$ on the minimal regular model of $X$ over $F^{ur}$ induces an action $\Upphi_{Gr(X)}$. The Tamagawa number of $Jac(X)$ over $F$ is given by
    \begin{align}\label{Formula: Tamagawa number}
       \Upphi_{Gr(X),F}:= \vert \Upphi_{Gr(X)}^{Gal(F^{ur}/F)}\vert,
    \end{align}
    Where $\Upphi_{Gr(X)}^{Gal(F^{ur}/F)}$ is the set of $Gal(F^{ur}/F)$ invariant elements of $\Upphi_{Gr(X)}$. In section \ref{Section: Galois action}, we describe the action of $Gal(F^{ur}/F)$ on $Gr(X).$\par
    By Lemma \ref{Lemma:2:1-covers-are-spectral}, degree two covers of schemes over $F$ or $\mathcal{O}_F$, can be viewed as spectral covers (Definition \ref{Definition: Spectral-covers}). In particular, given that $\mathcal{X}$ is the normalization of $\mathcal{Y}^{min}_s$ over $K(X),$ there exists a line bundle $\mathcal{L}\in Pic(\mathcal{Y})$ and a section $s\in \mathcal{L}^{\otimes 2}$, such that  
    \begin{align*}
        \mathcal{X}\cong \underline {Spec}(Sym^{\bullet} \mathcal{L}^\vee /(y^2-s)).    
    \end{align*}
    We define $v_{\varphi}$ to be the order of vanishing of $s$ on the irreducible components of $\mathcal{Y}^{min}_s.$ 
    Similar to the case of hyperelliptic curves, the reduction type of $X$ is determined by the combinatorial data of the cluster picture, the action of Galois group $G_F$ on the cluster picture and the numerical invariant $v_\varphi$ (See Theorem \ref{Theorem: Semi-stability-criterion-for 2:1-covers} and Proposition \ref{Proposition:Reduction-type}). In the case that $X$ has semi-stable reduction, the same data determines the description of the minimal regular model of $X$ over $F$ (See Proposition \ref{Proposition:Description-of-minimal-regular-model}).\par
In Section \ref{Section: Galois action}, we provide an algorithm for computing the normalized p-adic volume of Hitchin fibers over the semi-stable locus of the moduli space of rank 2 twisted Higgs bundles. In Section \ref{Section: Galois action}, we assume that $F$ has finite residue field and $Y/F$ is a smooth curve, which has good reduction over $F.$ Furthermore, we fix a line bundel $\mathcal{L}\in Pic(\mathcal{Y}^{min}).$ Let
        \begin{align*}
                    h:\mathcal{M}_{\mathcal{Y}}^\mathcal{L}&(2,d)\longrightarrow \mathcal{A}_{\mathcal{Y}}^\mathcal{L}(2,d)\\
                &(E,\theta)\longmapsto (Tr(\theta),det(\theta))
    \end{align*}
    denote the Hitchin map. For generic values of $\alpha \in \mathcal{A}_{\mathcal{Y}}^\mathcal{L}(2,d)$, by spectral correspondence, we have
    \begin{align*}
        h^{-1}(\alpha)=Jac(\mathcal{Y}_\alpha),
    \end{align*}
    where $\mathcal{Y}_\alpha$ is the spectral cover of $\mathcal{Y}^{min}$, associated to the collection of sections $\alpha.$ By Lemma \ref{Lemma: Normalized-volume}, computation of the normalized $p$-adic volume of $h^{-1}(\alpha)$, amounts in computing the Tamagawa number of $Jac(Y_{\alpha})$. We compute the latter, by applying Equation \ref{Formula: Tamagawa number} to $Y_{\alpha}/F.$
    \\
    \\
    \section{Preliminaries}\label{Section: Preliminaries}
    \subsection{Model of curves}\label{Subsection: Model of curves} Throughout this section, we assume that $F$ is a discrete valuation field with its ring of integers denoted by $\mathcal{O}_F$ and normalized valuation $v_F\colon \mathcal{O}_F\rightarrow \mathbb{Z}$. We denote the maximal ideal of $\mathcal{O}_F$ by $m_F=(\pi_F)$ and its residue field $\mathcal{O}_F/m_F$ by $k_F$. We always assume that $char(k_F)$ is distinct from 2. By a curve $X/F$ we mean a smooth, projective and geometrically connected scheme of dimension 1.
    \begin{definition}
        A model of a curve $X/F$ is a projective integral scheme $\mathcal{X}$ flat over $Spec{(\mathcal{O}_F)}$, together with an isomorphism $\mathcal{X}_F:=\mathcal{X}\times_{\mathcal{O}_F} Spec(F)\cong X$. A model of $\mathcal{X}$ of X is said to be regular (resp. normal) model of $X$, if $\mathcal{X}$ is a regular (resp. normal) scheme over $Spec(\mathcal{O}_F$).
    \end{definition}
    We denote curves over $Spec(F)$ by capital letters (for example $X,Y,Z,\dots$) and we denote arithmetic surfaces over $Spec(\mathcal{O}_F$) by calligraphic letters (for example $\mathcal{X},\mathcal{Y},\mathcal{Z},\dots$). Given $\mathcal{X}$ a model of $X$, we denote its special fiber by $\mathcal{X}_s:=\mathcal{X}\times_{Spec({\mathcal{O}_F})}Spec{(k_F)}$. When $\mathcal{X}$ is normal, then for any generic point $\xi\in \mathcal{X}_s$, $\mathcal{O}_{\mathcal{X},\xi}$ is a discrete valuation ring dominating $\mathcal{O}_F.$ We denote the valuation of $\mathcal{O}_{\mathcal{X},\xi}$ extending $v_F$ by $v_{\mathcal{\xi}}\colon K(X)\rightarrow \mathbb{Q}.$ Moreover, we denote the ramification index of the extension $\mathcal{O}_{\mathcal{X},\xi}/\mathcal{O}_F$ by $e(v_\xi,v).$
    \begin{lemma}
        Let $X/F$ be a smooth curve and $\mathcal{X}$ a normal model of $X$. Let $C\subset \mathcal{X}_s$ be an irreducible component of $\mathcal{X}_s$ with generic point $\mathcal{\xi}$. Then, $C$ is reduced if and only if $e(v_{\xi},v)=1$.
    \end{lemma}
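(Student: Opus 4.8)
The plan is to reduce the statement to a computation inside the stalk of the structure sheaf of $\mathcal{X}_s$ at the generic point $\xi$ of $C$. Recall that reducedness of the component $C$ is governed by its generic point: writing the special fiber as a cycle $\mathcal{X}_s = \sum_i d_i C_i$, the component $C = C_i$ is reduced precisely when its multiplicity $d_i = \mathrm{length}_{\mathcal{O}_{\mathcal{X}_s,\xi}}\mathcal{O}_{\mathcal{X}_s,\xi}$ equals $1$, equivalently when the Artinian local ring $\mathcal{O}_{\mathcal{X}_s,\xi}$ is reduced. So the first step is to record this reduction to a condition on $\mathcal{O}_{\mathcal{X}_s,\xi}$, and the second is to identify that ring explicitly.

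Since $\mathcal{X}_s$ is the closed subscheme of $\mathcal{X}$ cut out by the ideal $\pi_F\,\mathcal{O}_{\mathcal{X}}$, taking stalks at $\xi$ yields
\[
\mathcal{O}_{\mathcal{X}_s,\xi} \;\cong\; \mathcal{O}_{\mathcal{X},\xi}/\pi_F\,\mathcal{O}_{\mathcal{X},\xi}.
\]
By the normality hypothesis recalled just before the statement, $\mathcal{O}_{\mathcal{X},\xi}$ is a discrete valuation ring dominating $\mathcal{O}_F$; fix a uniformizer $t$. By definition of the ramification index $e := e(v_\xi,v)$ one has $v_\xi(\pi_F) = e\cdot v_\xi(t)$, so $\pi_F = u\,t^{e}$ for some unit $u \in \mathcal{O}_{\mathcal{X},\xi}^{\times}$, and hence
\[
\mathcal{O}_{\mathcal{X}_s,\xi} \;\cong\; \mathcal{O}_{\mathcal{X},\xi}/(t^{e}).
\]

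The third step is to analyze this quotient directly. The ring $\mathcal{O}_{\mathcal{X},\xi}/(t^{e})$ is Artinian local of length $e$, with maximal ideal generated by the image $\bar t$ of $t$. If $e = 1$ it is the residue field $\kappa(\xi)$, hence reduced; if $e \geq 2$, then $\bar t \neq 0$ while $\bar t^{\,e} = 0$, so $\bar t$ is a nonzero nilpotent and the ring is not reduced. Combining this with the first step shows that $C$ is reduced if and only if $e(v_\xi,v) = 1$.

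I expect the delicate points to be bookkeeping rather than genuine obstacles: fixing the convention that ``$C$ reduced'' refers to $C$ occurring with multiplicity one in the cycle $\mathcal{X}_s$ (so that the question is controlled by the generic stalk), and matching the chosen normalization of $v_\xi \colon K(X)\to \mathbb{Q}$ to the relation $\pi_F = u\,t^{e}$ that encodes $e(v_\xi,v)$. Once these conventions are pinned down, the argument is just the standard identification of the multiplicity of a fibral component with the ramification index of its associated valuation.
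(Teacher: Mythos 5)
Your proof is correct and follows the same route as the paper: reduce to the multiplicity of $C$ in the cycle $\mathcal{X}_s$, identify the stalk $\mathcal{O}_{\mathcal{X}_s,\xi}$ as $\mathcal{O}_{\mathcal{X},\xi}/(\pi_F)$ with $\mathcal{O}_{\mathcal{X},\xi}$ a DVR, and read off that the multiplicity equals $v_\xi(\pi_F)=e(v_\xi,v)$. You simply spell out the length computation that the paper states in one line.
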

    \begin{proof}
        Assume that $\mathcal{X}_s=\sum n_i C_i$, where the sum ranges over the irreducible components of $\mathcal{X}_s$ and $n_i$ is the multiplicity of $C_i$ in $\mathcal{X}_s.$ Then, the irreducible component $C$ is reduced if and only if it has multiplicity 1 in $\mathcal{X}_s$. Given a uniformizer $\pi_F\in \mathcal{O}_F$, we have that the multiplicity of $C$ is given by $v_\xi(\pi_F)=e(v_\xi,v).$
    \end{proof}
    \begin{definition}
        A normal model $\mathcal{X}/\mathcal{O}_F$ of $X$ is semi-stable, if the geometric fiber of $\mathcal{X}$ is reduced with at worst nodal singularities. A curve $X/F$ is semi-stable if it admits a semi-stable model over $\mathcal{O}_F$.
    \end{definition}
    By a theorem of Deligne and Mumford, given a curve $X/F$, there exists a finite extension $L/F$ such that $X_L$ has a  semi-stable model (\cite{DM}). However, their proof of this theorem is not constructive, it provides no information on how to construct such a field extension $L/F.$ A more constructive proof of this theorem is provided in \cite{AW}, which provides an extension $L/F$ where $X$ becomes semi-stable over $L$, although $L/F$ is not necessarily minimal extension with this property. Generally, in the case that $\mathcal{X}$ has potentially tame semi-stable reduction, the combinatorics of the dual graph of a good model (see \cite[Section 1.8]{LL99}) of $X$ can be used for determining a field extension $L/F$ where $X$ becomes semi-stable over $F.$ In general, these extensions are not minimal. On the other hand, assuming that $X$ only becomes semi-stable over a wildly ramified extension, the combinatorics of the dual graph of a regular model of $X$ is not enough for determining a field extension $L/F$, where $X$ becomes semi-stable over $L$.
    \begin{remark}
        When $k_F$ is algebraically closed, a singular point $x\in \mathcal{X}_s$ is nodal if and only if $\mathcal{O}_{\mathcal{X},x}\cong \mathcal{O}_F[[u,v]]/(uv-\pi_F^r)$ for some positive integer $r$. A point $x\in \mathcal{X}_s$ is a smooth point if and only if $\mathcal{O}_{\mathcal{X},x}\cong \mathcal{O}_F[[u]].$
    \end{remark}
    \begin{remark}
        Assuming that $g(X)>0$, up to isomorphism, there exists a unique regular model denoted by $\mathcal{X}^{min}$ dominated by any other regular models. If $X$ has semi-stable reduction, then $\mathcal{X}^{min}$ is a semi-stable model of $X$.
    \end{remark}
    \begin{definition}\label{Definition: Galois covering of smooth curves}
        A finite covering of smooth curves $\varphi\colon X\rightarrow Y$ over $F$ is a Galois covering with the Galois group $G$, if $Y\cong X/G$ and $K(X)/K(Y)$ is Galois with the Galois group $Gal(K(X)/K(Y))=G.$
    \end{definition}
    Next, we are going to show that degree two Galois cover of schemes can be viewed as spectral covers.
    \begin{definition}\label{Definition: Trace map}
        Let $Y$ be a scheme and let $\mathcal{F}$ be a locally free sheaf of commutative $\mathcal{O}_Y$ algebras. By \[Tr\colon \mathcal{F}\rightarrow \mathcal{O}_Y,\]
        we mean the trace map of $\mathcal{F}$ which is locally defined as follow. Over a trivializing open set $U\subset Y$, define
        \begin{align*}
            Tr(U)\colon&\mathcal{F}(U)\rightarrow \mathcal{O}_Y(U)\\
            &a\mapsto Tr(l_a),
        \end{align*}
        where $l_a\in End_{\mathcal{O}_Y(U)}(\mathcal{F}(U),\mathcal{F}(U))$ is the endomorphism given by left multiplication by $a$, and $T r:End_{\mathcal{O}_Y(U)}(\mathcal{F}(U),\mathcal{F}(U))\rightarrow \mathcal{O}_Y(U)$ is the usual trace map defined on the endomorphism ring $End_{\mathcal{O}_Y(U)}(\mathcal{F}(U),\mathcal{F}(U))$.
    \end{definition}
    \begin{definition}\label{Definition: Spectral-covers}
        Let $L$ be a line bundle on the scheme $Y$. Let $\pi\colon TotL\rightarrow Y$ denote the projection map. The line bundle $\pi^*L$ over $TotL$ has an obvious global section, which we denote it by $y\in \pi^*L(TotL).$\par
        Given a collection of sections $\alpha:=\{\alpha_i \in H^0(Y,L^{\otimes i})\}_{1 \leq i\leq n }$, the spectral cover associated to this collection of sections is the subscheme $Y_{\alpha}:=\underline{Spec}_Y(L^{\vee}/y^n- \alpha_1 y^{n-1} + \dots + (-1)^n\alpha_n)$ of $TotL$ given by the zeros of the section
        \begin{align*}
            y^n- \alpha_1 y^{n-1} + \dots + (-1)^n\alpha_n \in (\pi^*L)^{\otimes n}(TotL).
        \end{align*}
    \end{definition}
    \begin{lemma}\label{Lemma:2:1-covers-are-spectral}
        Let $\varphi\colon X\rightarrow Y$ be a finite and flat morphism of degree 2 of schemes, and assume that 2 is invertible in $\mathcal{O}_Y^{\times}$. Then, there exists a line bundle $L\in Pic(Y)$ and a section $s\in L^{\otimes 2}$ such that $X$ is isomorphic to $\underline{Spec}(Sym^\bullet L^\vee/(y^2-s)).$
    \end{lemma}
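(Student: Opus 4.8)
The plan is to work entirely with the pushforward algebra $\mathcal{A}:=\varphi_*\mathcal{O}_X$. Since $\varphi$ is finite and flat of degree $2$, $\mathcal{A}$ is a locally free $\mathcal{O}_Y$-module of rank $2$, and it is a sheaf of commutative $\mathcal{O}_Y$-algebras containing $\mathcal{O}_Y\cdot 1$ via the structure map. The first step is to split off this copy of $\mathcal{O}_Y$. Applying the trace map of Definition \ref{Definition: Trace map} to $\mathcal{A}$, one computes $\mathrm{Tr}(1)=2$, so $\mathrm{Tr}$ restricts to multiplication by $2$ on $\mathcal{O}_Y\cdot 1$. Because $2\in\mathcal{O}_Y^{\times}$, the map $e:=\tfrac{1}{2}\mathrm{Tr}\colon \mathcal{A}\to\mathcal{O}_Y$ is a retraction of the unit, and hence
\[
\mathcal{A}=\mathcal{O}_Y\cdot 1\ \oplus\ L^{\vee},\qquad L^{\vee}:=\ker(e).
\]
As a direct summand of a locally free module of rank $2$, the sheaf $L^{\vee}$ is locally free of rank $1$; I set $L:=(L^{\vee})^{\vee}$, the line bundle I want.

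Next I would identify the remaining algebra data. Every local section $a$ of $L^{\vee}$ is trace-zero, so by the Cayley--Hamilton relation $a^2-\mathrm{Tr}(a)a+\det(a)=0$ it satisfies $a^2=-\det(a)\in\mathcal{O}_Y$. Thus multiplication $\mathcal{A}\otimes\mathcal{A}\to\mathcal{A}$ carries $L^{\vee}\otimes L^{\vee}$ into $\mathcal{O}_Y\cdot 1$: over a trivializing open set $U$ on which $L^{\vee}$ is generated by $a$, the element $a\otimes a$ generates $(L^{\vee})^{\otimes 2}$ and maps to $a^2\in\mathcal{O}_Y$, so the $L^{\vee}$-component of the product vanishes. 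This multiplication therefore defines an $\mathcal{O}_Y$-linear map $(L^{\vee})^{\otimes 2}\to\mathcal{O}_Y$, which is the same datum as a global section
\[
s\in \mathrm{Hom}_{\mathcal{O}_Y}\big((L^{\vee})^{\otimes 2},\mathcal{O}_Y\big)=H^0\big(Y,L^{\otimes 2}\big).
\]

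Finally I would reassemble the algebra. On each trivializing $U$ as above we now have $\mathcal{A}|_U=\mathcal{O}_U[a]/(a^2-s_U)$ with $s_U=a^2$, and these descriptions are compatible with the transition functions of $L^{\vee}$ precisely because $s$ is a well-defined section of $L^{\otimes 2}$. Interpreting $a$ as the tautological coordinate $y\in\pi^*L$ on $\mathrm{Tot}\,L$, this is exactly the presentation $\mathrm{Sym}^{\bullet}L^{\vee}/(y^2-s)$, so that $X=\underline{\mathrm{Spec}}_Y(\mathcal{A})\cong \underline{\mathrm{Spec}}\big(\mathrm{Sym}^{\bullet}L^{\vee}/(y^2-s)\big)$ as schemes over $Y$, which is the desired spectral description in the sense of Definition \ref{Definition: Spectral-covers}. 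I expect the main obstacle to be bookkeeping rather than conceptual: one must check that the local trivializations of $L^{\vee}$ together with the local equations $y^2=s_U$ glue to the single section $s\in H^0(Y,L^{\otimes 2})$, and that this matches the relative-$\mathrm{Spec}$ convention for the tautological section, including the sign normalization identifying $-\det(a)$ with the value of $s$. The essential input that makes all of this work, namely splitting the unit and forcing the trace-zero part to square into the base, is the invertibility of $2$.
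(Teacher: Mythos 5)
Your proposal is correct and follows essentially the same route as the paper: split $\varphi_*\mathcal{O}_X\cong\mathcal{O}_Y\oplus L^{\vee}$ using the retraction $\tfrac{1}{2}\mathrm{Tr}$, define $s$ as the restriction of the multiplication to $(L^{\vee})^{\otimes 2}$, and check the two algebra structures agree locally. Your Cayley--Hamilton remark that a trace-zero section squares into $\mathcal{O}_Y\cdot 1$ is just a more explicit justification of the step the paper handles by composing the multiplication with $\tfrac{1}{2}\mathrm{Tr}$; the content is identical.
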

    \begin{proof}
        As $\varphi$ is an affine morphism and $Y$ is locally Noetherian, the category of degree 2 covers of $Y$ is equivalent to the rank two locally free sheaves of $\mathcal{O}_Y$-algebras. Therefore, the data of a degree 2 covering $X$ of $Y$ is equivalent to the sheafified spectrum of a rank two locally free sheaf of commutative algebras  $\varphi_*\mathcal{O}_X$.\par
        Let $Tr\colon \varphi_*\mathcal{O}_X\rightarrow \mathcal{O}_Y$ denote the trace map. Since the composition $\mathcal{O}_Y\rightarrow \varphi_*\mathcal{O}_X\xrightarrow{\frac{1}{2}Tr}\mathcal{O}_Y$ is equal to $id_{\mathcal{O}_Y}$, we get that $\varphi_*\mathcal{O}_X\cong M\oplus \mathcal{O}_Y$, for a line bundle $M\in Pic(Y)$. Set $L:=M^{\vee}.$ Define $s\in L^{\otimes 2}(Y)$ to be the section defined by composition 
        \begin{align*}
            M^{\otimes{2}}\hookrightarrow(\varphi_*\mathcal{O}_X)^{\otimes 2}\rightarrow \varphi_*\mathcal{O}_X\xrightarrow{\frac{1}{2}Tr}\mathcal{O}_Y,
        \end{align*}
        where the second map is the multiplication map on $\varphi_*\mathcal{O}_X.$ We claim that 
        \begin{align*}
            \underline{Spec}_Y(Sym^\bullet L^\vee/(y^2-s))\cong \underline{{Spec}}_Y(\varphi_*\mathcal{O}_X).
        \end{align*}
        To verify this, it suffices to show that $Sym^\bullet L^\vee/(y^2-s))\cong \varphi_*\mathcal{O}_X$ as free $\mathcal{O}_Y$ algebras. As $Sym^\bullet L^\vee/(y^2-s)\cong L^{\vee}\oplus \mathcal{O}_Y$ we just need to verify that the multiplication map on these two $\mathcal{O}_Y$ algebras are the same.
        Let $U\subset Y$ be an affine open subset. Then, over this open set, the multiplication map on $Sym^\bullet L^\vee/(y^2-s)$ is given by:
        \begin{align*}
            (L^{\vee}\vert_{U}\oplus \mathcal{O}_U)&\otimes (L^{\vee}\vert_U \otimes \mathcal{O}_U)\rightarrow L^{\vee}\vert_U\oplus \mathcal{O}_U,\\
            &(l_1,r_1) \otimes (l_2,r_2)\mapsto (r_1l_2+r_2l_1,s(l_1\otimes l_2)+ r_1r_2),
        \end{align*}
            which is the same as the multiplication map on $\varphi_*\mathcal{O}_Y.$
    \end{proof}
    We fix $\varphi\colon X\rightarrow Y$ to be a degree 2 Galois covering with Galois group $G$. Given that $\mathcal{Y}/\mathcal{O}_F$ is a normal model of $Y$, we denote the normalization of $\mathcal{Y}$ in $K(X)$ by $N(\mathcal{Y},K(X))$. Note that the morphism $\varphi\colon X\rightarrow Y$ extends uniquely to a morphism of normal models, which by an abuse of notation, we also denote it by $\varphi\colon N(\mathcal{Y},K(X))\rightarrow \mathcal{Y}.$ As $\mathcal{O}_F$ is an excellent ring, the induced morphism of normal models is finite. Moreover, assuming that $\mathcal{Y}$ is regular, we get that the induced morphism $\varphi:\mathcal{X}\rightarrow \mathcal{Y}$ is flat (\cite[18.H]{Mats-Comalg}) .  \par
    Since $\varphi$ is a degree two Galois cover, by Lemma \ref{Lemma:2:1-covers-are-spectral}, there exists a line bundle $\mathcal{L}\in Pic(\mathcal{Y})$ and a section $s\in \mathcal{L}^{\otimes2}(\mathcal{Y})$, such that $\underline{Spec}_{{\mathcal{Y}}}Sym^\bullet\mathcal{L}^\vee/(y^2-s)$ is isomorphic to $N(\mathcal{Y},K(X)).$ We denote the scheme $\underline{Spec}_{{\mathcal{Y}}}Sym^\bullet\mathcal{L}^\vee/(y^2-s)$ by $S(\mathcal{Y},\mathcal{L},s)$.
    \begin{remark}\label{Remark:TheoremofRaynaud}
        Assuming that $X$ is semi-stable, a theorem of Raynaud(\cite{Raynaud}) gives that $Y$ is also semi-stable. Moreover, considering the minimal model of $X$, the action of the Galois group $G$ on $X$ extends to an action on $\mathcal{X}^{min}$ and $\mathcal{X}^{min}/G$ is a semi-stable model of $Y$. However, this quotient is not necessarily regular. In this case, a sequence of blowups centered singular points of the quotient gives a regular semi-stable model of $X$ (\cite[Proposition 10.3.48]{Liu-Alg-Geom}).
    \end{remark}
    For the remaining part of this subsection, we assume that $\varphi\colon X\rightarrow Y$ is a degree two Galois cover of curves and $\mathcal{Y}/\mathcal{O}_F$ is a normal model of $Y$ and we set $\mathcal{X}:=N(\mathcal{Y},K(X))$. We fix $\mathcal{L}\in Pic(\mathcal{Y})$ and $s\in \mathcal{L}^{\otimes 2}(\mathcal{Y})$ such that $N(\mathcal{Y},K(X))=S(\mathcal{Y},\mathcal{L},s).$
    \begin{lemma}\label{Lemma:Trivialization-of-the-covering}
        Let $C\subset \mathcal{Y}_s$ be a reduced irreducible component of $\mathcal{Y}_s$, with the generic point $\xi\in C$. Then,
        \begin{enumerate}
            \item $\mathcal{O}_{\mathcal{Y},\xi}\otimes \mathcal{O}_\mathcal{X}\cong \mathcal{O}_{\mathcal{Y},\xi}[z]/(z^2-\alpha)$ for some $\alpha \in \mathcal{O}_{\mathcal{Y},\xi}$ unique up to a multiplication by a unit in $\mathcal{O}_{\mathcal{Y},\xi}.$
            \item There are two irreducible components contained in $\varphi^{-1}(C)$ if and only if $\alpha$ is a non-zero square in $k(\xi).$ \item An irreducible component contained in $\varphi^{-1}(C)$ is reduced if and only if $v_\xi(\alpha)$ is even.
        \end{enumerate}
    \end{lemma}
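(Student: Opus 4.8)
Throughout, write $R:=\mathcal{O}_{\mathcal{Y},\xi}$. Since $C$ is a \emph{reduced} component of $\mathcal{Y}_s$, the reducedness criterion established above gives $e(v_\xi,v)=1$, so $R$ is a discrete valuation ring; fix a uniformizer $t$, and recall $\operatorname{char}k(\xi)\neq 2$. For (1) the plan is simply to localise the spectral presentation. Because $N(\mathcal{Y},K(X))=S(\mathcal{Y},\mathcal{L},s)$, we have $(\varphi_*\mathcal{O}_\mathcal{X})_\xi=(Sym^\bullet\mathcal{L}^\vee/(y^2-s))_\xi$; as $R$ is local, $\mathcal{L}_\xi$ is free of rank one, and a choice of generator trivialises $\mathcal{L}$ and sends $s$ to an element $\alpha\in R$, yielding $(\varphi_*\mathcal{O}_\mathcal{X})_\xi\cong R[z]/(z^2-\alpha)$. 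Replacing the generator by a unit multiple changes $\alpha$ by the square of a unit, so $\alpha$ is well defined up to a unit. I would record two structural facts for later use: $\alpha$ is not a square in $K(Y)=\mathrm{Frac}(R)$ (otherwise $K(X)/K(Y)$ would not be a degree-two field extension), so $S:=R[z]/(z^2-\alpha)$ is a domain; and since $S$ equals the semilocalisation of the normal scheme $\mathcal{X}$ along the fibre over $\xi$, it is integrally closed, finite and free of rank two over $R$, hence a semilocal Dedekind domain (in particular this forces $v_\xi(\alpha)\le 1$).

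For (2) the key observation is that the components of $\mathcal{X}_s$ lying over $C$—i.e.\ the irreducible components of $\varphi^{-1}(C)$ that dominate $C$—are precisely the points of $\mathcal{X}$ above the generic point $\xi$, hence the maximal ideals of $S$ (since $\varphi$ is finite, each such point is the generic point of a one-dimensional component mapping onto $C$). These biject with the points of the fibre $\operatorname{Spec}(S\otimes_R k(\xi))=\operatorname{Spec}(k(\xi)[z]/(z^2-\bar\alpha))$, where $\bar\alpha$ is the image of $\alpha$. Since $\operatorname{char}k(\xi)\neq 2$, the polynomial $z^2-\bar\alpha$ has two distinct roots in $k(\xi)$ exactly when $\bar\alpha$ is a nonzero square, and is otherwise either irreducible (for a nonzero non-square) or equal to $z^2$ (when $\bar\alpha=0$), each giving a single point. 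Therefore there are two components over $C$ if and only if $\bar\alpha$ is a nonzero square in $k(\xi)$.

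For (3) I would convert reducedness into a ramification condition and read it off the parity of $v_\xi(\alpha)$. By the reducedness criterion, a component over $C$ with generic point $\xi'$ is reduced iff $e(v_{\xi'},v)=1$; by multiplicativity of ramification indices in the tower $\mathcal{O}_F\subset R\subset\mathcal{O}_{\mathcal{X},\xi'}$ together with $e(v_\xi,v)=1$, this amounts to $e(v_{\xi'},v_\xi)=1$. Normalising the extended valuation so that $v_{\xi'}(t)=e(v_{\xi'},v_\xi)$, the relation $z^2=\alpha$ gives $2\,v_{\xi'}(z)=e(v_{\xi'},v_\xi)\,v_\xi(\alpha)$. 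If $v_\xi(\alpha)$ is odd this forces $e(v_{\xi'},v_\xi)=2$, so every component over $C$ is non-reduced; if $v_\xi(\alpha)=2m$ is even, then $w:=z/t^{m}$ satisfies $w^2=u$ for a unit $u\in R^\times$, and $R[w]/(w^2-u)$ has unit discriminant $4u$, hence is \'etale over $R$ and $e(v_{\xi'},v_\xi)=1$. Thus a component over $C$ is reduced if and only if $v_\xi(\alpha)$ is even.

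The part I expect to require the most care is the identification in (2), because $R=\mathcal{O}_{\mathcal{Y},\xi}$ is a discrete valuation ring that is \emph{not} Henselian: the splitting of $C$ into two components is controlled by the residue equation $z^2-\bar\alpha$ over $k(\xi)$, not by whether $\alpha$ itself is a square in $R$. Correctly invoking the normality of $\mathcal{X}$—to guarantee that $S$ is the full integral closure, so that its maximal ideals genuinely count the components rather than the points of a non-normal fibre—is the crucial step; once this is in place, the elementary degree-two \'etale-versus-ramified dichotomy settles both (2) and (3).
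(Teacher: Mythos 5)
Your proof is correct and follows essentially the same route as the paper: localize the spectral presentation at $\xi$ to get $\mathcal{O}_{\mathcal{Y},\xi}[z]/(z^2-\alpha)$, count components over $C$ via the residue fibre $k(\xi)[z]/(z^2-\bar{\alpha})$, and read off reducedness from the parity of $v_\xi(\alpha)$ through the ramification index; you merely make explicit the valuation identity $2\,v_{\xi'}(z)=e(v_{\xi'},v_\xi)\,v_\xi(\alpha)$ and the \'etale/discriminant computation that the paper leaves implicit in part (3). Your parenthetical remark that normality of $S$ forces $v_\xi(\alpha)\le 1$ is a correct but unused aside (and slightly at odds with how $v_\varphi(C)$ is used with larger values later in the paper, where the trivialization effectively comes from the spectral presentation $S(\mathcal{Y},\mathcal{L},s)$ rather than its normalization); your argument for (3), which handles arbitrary $v_\xi(\alpha)$, is the robust one and matches the paper's intent.
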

    \begin{proof}
        For any affine open set $\xi \in Spec(A)\subset \mathcal{Y}$, we have:
        \begin{align*}
            Spec(A)\otimes \mathcal{O}_\mathcal{X}=A[z]/(z^2-a)
        \end{align*}
        for some $a \in A.$
        Therefore:
        \begin{align*}
            \mathcal{O}_{\mathcal{Y},\xi}\otimes \mathcal{O}_{\mathcal{X}}&=\lim_{\longrightarrow}A_i\otimes \mathcal{O}_\mathcal{X}\\
            &=\lim_{\longrightarrow}(A_i\otimes\mathcal{O}_{\mathcal{X}})\\  &=\lim_{\longrightarrow}A_i[z]/(z^2-a\vert_{Spec(A_i)})\\ &=\mathcal{O}_{\mathcal{Y},\xi}[z]/(z^2-\alpha),
        \end{align*}
        for some $\alpha \in \mathcal{O}_{\mathcal{Y},\xi}$, where the limit is taken over all the affine open set $Spec(A_i)\subset Spec(A)$ containing the point $\xi.$\par
        There are two irreducible components contained in $\varphi^{-1}(C)$ if and only if $\varphi^{-1}(\xi)$ consists of two generic points of $\mathcal{X}$. The latter statement is equivalent to the existence of the following isomorphism 
        \begin{align*}
            \mathcal{O}_{\mathcal{Y},\xi} [z]/(z^2-\alpha)\otimes_{\mathcal{O}_{\mathcal{Y}.\xi}}k(\xi)\cong k(\xi)\oplus k(\xi),
        \end{align*}
        Note that such isomorphism exists if and only if $\alpha$ is a square in $k(\xi).$ \par
        Let $D\subset \varphi^{-1}(C)$ be an irreducible component of $\mathcal{X}_s$ and let $\eta$ be the generic point of $D$. If $\alpha$ is a non-zero square in $k(\xi)$, by the previous part there are two points in the pre-image of $\xi$, each of which have to be unramified over $\xi$ and as $C$ is reduced, we get that the corresponding two irreducible components in $\varphi^{-1}(C)$ are reduced. As $\alpha$ is a nonzero square in $k(\xi)$, we get that $\alpha\in \mathcal{O}_{\mathcal{Y},\xi}^{\times}$ and $v_{\xi}(\alpha)=0.$ Assuming that $\alpha$ is not a nonzero square in $k(\xi)$, then $\mathcal{O}_{\mathcal{X},\eta}\cong\mathcal{O}_{\mathcal{Y},\xi}[z]/(z^2-\alpha)$. In this case we have $e(v_\eta,v)=1$  if and only if $v_\xi(\alpha)$ is even.
    \end{proof}
    Note that the same computation as given in the proof of Lemma \ref{Lemma:Trivialization-of-the-covering} gives that for any point $y\in \mathcal{O}_\mathcal{Y}$, we have \begin{align}\label{Equation: Trivialization}    
        \mathcal{O}_{\mathcal{Y},y}\otimes \mathcal{O}_{\mathcal{X}}=\mathcal{O}_{\mathcal{Y},y}[z]/(z^2-\alpha_y),    
    \end{align}
    for some $\alpha_y\in \mathcal{O}_{\mathcal{Y},y}.$
    \begin{definition}\label{Definition:Order-of-vanishing}
        Given a point $y\in \mathcal{Y}_s$, we call $\alpha_y$ as given in Equation \ref{Equation: Trivialization}, the trivialization of the covering $\varphi\colon \mathcal{X}\rightarrow \mathcal{Y}$ at $y$. Given that $C\subset \mathcal{Y}_s$ is an irreducible component with the generic point $\xi$, we set $v_\varphi(C):=v_\xi(\alpha_\xi).$
    \end{definition}
    We can rephrase Lemma \ref{Lemma:Trivialization-of-the-covering} as follow.
    \begin{lemma}\label{Lemma:Trivialization-of-the-covering-using-vanishing-order}
        Let $C\subset \mathcal{Y}_s$ be an irreducible component of $\mathcal{Y}_s$ with the generic point $\xi.$ Then,
        \begin{enumerate}
            \item An irreducible component of $\mathcal{Y}_s$ contained in $\varphi^{-1}(C)$ is reduced if and only if $v_\varphi(C)$ is even.
            \item There are two irreducible components in $\varphi^{-1}(C)$ if and only if the trivialization of the covering $\varphi$ at $\xi$ is a square in $k(\xi)$.
        \end{enumerate}
    \end{lemma}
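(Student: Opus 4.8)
The plan is to deduce the statement directly from Lemma \ref{Lemma:Trivialization-of-the-covering} by matching the element $\alpha$ appearing there with the trivialization $\alpha_\xi$ of Definition \ref{Definition:Order-of-vanishing}. First I would observe that the isomorphism $\mathcal{O}_{\mathcal{Y},\xi}\otimes\mathcal{O}_{\mathcal{X}}\cong\mathcal{O}_{\mathcal{Y},\xi}[z]/(z^2-\alpha)$ furnished by the first part of Lemma \ref{Lemma:Trivialization-of-the-covering} is precisely Equation \ref{Equation: Trivialization} evaluated at the generic point $y=\xi$ of $C$, since the local ring at $\xi$ is the same object in both places. Hence the element $\alpha$ of Lemma \ref{Lemma:Trivialization-of-the-covering} may be taken to equal the trivialization $\alpha_\xi$, and by Definition \ref{Definition:Order-of-vanishing} we then have $v_\varphi(C)=v_\xi(\alpha_\xi)=v_\xi(\alpha)$.

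With this identification, both assertions become literal translations. For part (1), the hypothesis ``$v_\varphi(C)$ is even'' is by definition the condition ``$v_\xi(\alpha)$ is even'' appearing in the third part of Lemma \ref{Lemma:Trivialization-of-the-covering}, so an irreducible component of $\mathcal{X}_s$ lying over $C$ is reduced if and only if $v_\varphi(C)$ is even. For part (2), the second part of Lemma \ref{Lemma:Trivialization-of-the-covering} says that $\varphi^{-1}(C)$ contains two irreducible components if and only if $\alpha=\alpha_\xi$ is a nonzero square in $k(\xi)$, which is exactly the stated condition on the trivialization at $\xi$.

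I do not expect any genuine obstacle, since the statement is a notational reformulation; the only point requiring care is that the invariants used are insensitive to the choice of representative $\alpha$, which is pinned down only up to a unit of $\mathcal{O}_{\mathcal{Y},\xi}$ (indeed up to a square of a unit, a change of coordinate $z\mapsto uz$ rescaling $\alpha$ by $u^2$). For part (1) this is automatic, as any unit has valuation $0$, so the parity of $v_\xi(\alpha)$ is well defined. For part (2) the finer ambiguity by a square of a unit leaves the square class of the reduction $\bar\alpha_\xi\in k(\xi)$ unchanged, so the condition is meaningful; here I would also record, consistently with the second part of Lemma \ref{Lemma:Trivialization-of-the-covering}, the convention that ``square'' means \emph{nonzero} square, the degenerate case $v_\varphi(C)>0$ giving $\bar\alpha_\xi=0$ and a single ramified component rather than two.
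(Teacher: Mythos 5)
Your proposal is correct and matches the paper exactly: the paper offers no separate argument, introducing this lemma with the words ``We can rephrase Lemma \ref{Lemma:Trivialization-of-the-covering} as follow,'' which is precisely your identification of $\alpha$ with the trivialization $\alpha_\xi$ and of $v_\varphi(C)$ with $v_\xi(\alpha)$. Your added remarks on well-definedness up to (squares of) units and on the ``nonzero square'' convention are sensible care the paper leaves implicit.
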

    Given a degree two or three Galois cover of smooth curves $\varphi\colon X\rightarrow Y$, by \cite[Theorem 7.3]{LL99}, it is possible to find a regular model $\mathcal{Y}/{\mathcal{O}_F}$ of $Y$, with the property that its normalization in $K(X)$ is a regular model of $X.$ Lemma \ref{Lemma:Simultaneous-desingularization} states a criterion for regular models of $Y$ with the property that their normalization in $K(X)$ is a regular model of $X$. 
    \begin{lemma}(\cite[Lemma 2.1]{Srin15})\label{Lemma:Simultaneous-desingularization}
        Assume that $div(s)=\sum v_{C} C$ where $C$ varies over the irreducible components of $\mathcal{Y}$. Moreover, assume that the following two conditions hold:
        \begin{enumerate}
            \item For any two irreducible component $C_i$ where $v_{\varphi}(C_i)$ is odd, they do not intersect.
            \item Any $C_i$ for which $v_{\varphi}(C_i)$ is odd is regular.
        \end{enumerate}
        Then, $\mathcal{X}$ is a regular model of $X$.
    \end{lemma}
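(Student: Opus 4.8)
The plan is to prove regularity of $\mathcal{X}=N(\mathcal{Y},K(X))$ by a purely local analysis, since regularity is a local property and, because $\mathcal{X}$ is excellent and proper over $\mathcal{O}_F$, it suffices to check that the local rings $\mathcal{O}_{\mathcal{X},x}$ are regular at the closed points $x\in\mathcal{X}_s$ (any point specializes to such an $x$, and regularity localizes). Every such $x$ lies over a closed point $y=\varphi(x)\in\mathcal{Y}_s$, so I would fix such a pair $(x,y)$ and study the semi-local ring $\mathcal{O}_{\mathcal{Y},y}\otimes\mathcal{O}_{\mathcal{X}}$ using Equation \ref{Equation: Trivialization}.

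First I would record the local structure. Writing $R:=\mathcal{O}_{\mathcal{Y},y}$, the regularity of the model $\mathcal{Y}$ makes $R$ a two-dimensional regular local ring, hence a UFD. By Equation \ref{Equation: Trivialization} we have $R\otimes\mathcal{O}_{\mathcal{X}}\cong R[z]/(z^2-\alpha)$ with $\alpha=\alpha_y$, and since $R$ is a UFD I can factor $\alpha=u\,\beta\,\gamma^2$, where $u\in R^\times$, the element $\beta=\prod_{v_\varphi(C)\ \mathrm{odd}}\ell_C$ is the product of the local equations of the odd components through $y$ (each appearing once), and $\gamma$ absorbs the remaining even part. Substituting $w=z/\gamma$ identifies the localization of $\mathcal{X}$ over $y$ with $R[w]/(w^2-u\beta)$; this is consistent with $\mathcal{X}$ being \emph{normal}, as $u\beta$ is then square-free. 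Thus the odd components form exactly the branch divisor of the normalized cover, whereas even components contribute only a square factor that is stripped off by normalization.

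Now I would split into two cases according to whether $\beta$ is a unit. If no odd component passes through $y$, then $\beta\in R^\times$, the polynomial $w^2-u\beta$ is separable with unit constant term (recall $2\in\mathcal{O}_F^\times$), so $\mathcal{O}_{\mathcal{X},x}$ is étale over the regular ring $R$ and hence regular; here the presence of even components or of non-reduced fibre structure is harmless. If instead $\beta\in\mathfrak{m}:=\mathfrak{m}_y$, the only point over $y$ that can fail to be regular is the ramification point $w=0$, at which
\[
\mathcal{O}_{\mathcal{X},x}\cong\bigl(R[w]/(w^2-u\beta)\bigr)_{(\mathfrak{m},w)}
\]
is a hypersurface inside the three-dimensional regular local ring $R[w]_{(\mathfrak{m},w)}$. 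By the tangent-space (Jacobian) criterion this ring is regular if and only if $w^2-u\beta\notin(\mathfrak{m},w)^2$, equivalently $\beta\in\mathfrak{m}\setminus\mathfrak{m}^2$.

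Finally I would invoke the two hypotheses to force $\beta\in\mathfrak{m}\setminus\mathfrak{m}^2$ precisely in this case. Condition (1) guarantees that at most one odd component passes through $y$, so that $\beta=\ell_C$ is a single prime element of $R$ rather than a product lying in $\mathfrak{m}^2$; condition (2) guarantees that this component is regular at $y$, which is exactly the statement $\ell_C\in\mathfrak{m}\setminus\mathfrak{m}^2$. Combining these yields regularity of $\mathcal{O}_{\mathcal{X},x}$ at every closed point, and hence regularity of $\mathcal{X}$. I expect the main obstacle to be the ramification-point computation together with the bookkeeping of the square-free reduction $\alpha\mapsto u\beta$: one must verify that it is the \emph{odd}-multiplicity components that constitute the branch locus, so that (1) rules out the crossing configuration $w^2=\ell_{C}\ell_{C'}$ (an ordinary double point, where $\beta\in\mathfrak{m}^2$) and (2) rules out a single singular branch, these being exactly the two ways $\beta$ can land in $\mathfrak{m}^2$.
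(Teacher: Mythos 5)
Your proposal is correct and is, in substance, exactly the proof of \cite[Lemma 2.1]{Srin15} that the paper invokes (the paper itself only cites that lemma and notes the same argument applies to a section of $\mathcal{L}^{\otimes 2}$ in place of a rational function): extract the square part of the local trivialization in the UFD $\mathcal{O}_{\mathcal{Y},y}$, identify the normalization locally with $R[w]/(w^2-u\beta)$ for $\beta$ the product of the odd components through $y$, and apply the Jacobian criterion, with hypotheses (1) and (2) ensuring $\beta$ is either a unit or a single regular parameter. No gaps.
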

    \begin{proof}
        This is  \cite[Lemma 2.1]{Srin15} written in terms of the divisor of a section in the line bundle $\mathcal{L}^{\otimes2}$, rather than a rational function. In particular, the same proof applies.
    \end{proof}
    \begin{lemma}
        Let $\varphi\colon X\rightarrow Y$ be a degree 2 Galois cover of smooth curves and let $\mathcal{Y}$ be a regular model of $Y$ where $\mathcal{X}=N(\mathcal{Y},K(X))$ is regular. Assume that $C\subset \mathcal{Y}_s$ is an irreducible component where $C.C=-1$ and $\varphi^{-1}(C)$ consists of two irreducible components where $D_1\cap D_2\neq \emptyset$. Then $D_1$ and $D_2$ are not exceptional.
    \end{lemma}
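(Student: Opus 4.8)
The plan is to compute the self-intersection numbers $D_1^2$ and $D_2^2$ explicitly by pulling back $C$ along $\varphi$ and applying the projection formula, and then to observe that both numbers are at most $-2$; since an exceptional (i.e. $(-1)$-)curve on a regular model has self-intersection $-1$, this will show that neither $D_1$ nor $D_2$ can be exceptional.

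First I would identify the pullback divisor $\varphi^{*}C$. Since $\varphi^{-1}(C)$ consists of two irreducible components, part (2) of Lemma \ref{Lemma:Trivialization-of-the-covering-using-vanishing-order} shows that the trivialization $\alpha_\xi$ of $\varphi$ at the generic point $\xi$ of $C$ reduces to a \emph{nonzero} square in $k(\xi)$ (nonzero, since otherwise $z^2-\alpha_\xi$ would be inseparable over $k(\xi)$ and the fibre over $\xi$ would be a single point). As $\mathrm{char}(k_F)\neq 2$, the fibre $\varphi^{-1}(\xi)$ is then two reduced points, so each $D_i$ is unramified of ramification index $1$ over $C$. Consequently $\varphi^{*}C = D_1 + D_2$ as Cartier divisors on $\mathcal{X}$.

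Next I would exploit the Galois symmetry. The fibre $\varphi^{-1}(\xi)$ is a torsor under $G=\mathrm{Gal}(K(X)/K(Y))$ of order two, so the nontrivial element $\sigma\in G$ interchanges the two generic points, giving $\sigma(D_1)=D_2$. Since $\sigma$ is an automorphism of the regular model $\mathcal{X}$, it preserves intersection numbers, whence $D_1^2 = D_2^2$. Now, because $\varphi$ is finite and flat of degree $2$ and both $\mathcal{X}$ and $\mathcal{Y}$ are regular, the projection formula gives $(\varphi^{*}C)\cdot(\varphi^{*}C) = \deg(\varphi)\,(C\cdot C) = 2\cdot(-1) = -2$. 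Expanding $(D_1+D_2)^2 = 2D_1^2 + 2(D_1\cdot D_2)$ and using $D_1^2=D_2^2$ yields $D_1^2 = D_2^2 = -1-(D_1\cdot D_2)$. Finally, $D_1\cap D_2\neq\emptyset$ forces $D_1\cdot D_2\geq 1$, so $D_1^2 = D_2^2 \leq -2$, and neither component is exceptional.

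The step I expect to be the main obstacle is making the two intersection-theoretic inputs rigorous in the arithmetic-surface setting: verifying that the projection formula $(\varphi^{*}C)\cdot(\varphi^{*}C)=(\deg\varphi)(C\cdot C)$ genuinely holds for vertical divisors under the finite flat morphism $\varphi\colon\mathcal{X}\to\mathcal{Y}$ of regular models, and confirming the multiplicity computation $\varphi^{*}C = D_1+D_2$ with both coefficients equal to $1$. The remaining arithmetic is a one-line computation, and the conclusion depends only on the fact that an exceptional curve has self-intersection $-1$.
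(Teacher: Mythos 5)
Your proof is correct and takes essentially the same route as the paper: both arguments rest on the decomposition $\varphi^{*}C = D_1 + D_2$, the projection formula, and the fact that $D_1\cap D_2\neq\emptyset$ forces $D_1\cdot D_2\geq 1$. The paper applies the projection formula in the asymmetric form $D_1\cdot(D_1+D_2)=D_1\cdot\varphi^{*}C=C\cdot C=-1$ and derives the contradiction $D_1\cdot D_2=0$ from the assumption $D_1\cdot D_1=-1$, whereas you compute $(\varphi^{*}C)^{2}=2\,(C\cdot C)$ and use the Galois involution to get $D_1^2=D_2^2$ --- a harmless variant that yields the marginally stronger conclusion $D_i^2\leq -2$.
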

    \begin{proof}
        If $D_1$ is exceptional, then by \cite[Theorem 9.3.8]{Liu-Alg-Geom} $D_1.D_1=-1$. On the other hand by the projection formula \cite[Theorem 9.2.12]{Liu-Alg-Geom}, we see that $D_1.(D_1+D_2)=C_1.C_1=-1$. This implies that $D_1.D_2=0$ which contradicts the assumption that $D_1\cap D_2\neq \emptyset.$
    \end{proof}
    \begin{lemma}\label{Lemma: Extenstion-of-the-line-bundle-to-the-blowup-map}
        Let $\varphi\colon X\rightarrow Y$ be a degree 2 Galois cover of smooth curves over $F$ and let $\mathcal{Y}/\mathcal{O}_F$ be a regular model of $Y.$ Let $\pi\colon  \mathcal{Y}^{\prime}\rightarrow \mathcal{Y}$ be the blowup map centered at a closed smooth point $y\in \mathcal{Y}_s$. Given a line bundle $\mathcal{L}\in Pic(\mathcal{Y})$ and a section $s\in \mathcal{L}^{\otimes 2}(\mathcal{Y})$ such that \[N(\mathcal{Y},K(X))\cong S(\mathcal{Y},\mathcal{L},s).\] Then,
        \begin{align*}            
            N(\mathcal{Y}^{\prime},K(X))=S(\mathcal{Y}^{\prime},\pi^*\mathcal{L},\pi^*s).
        \end{align*}
    \end{lemma}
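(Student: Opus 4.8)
The plan is to turn the statement into a normality check by exploiting that the spectral construction of Definition \ref{Definition: Spectral-covers} commutes with the base change $\pi$. First I would record that $\mathcal{Y}'$ is again a regular model of $Y$: blowing up the regular arithmetic surface $\mathcal{Y}$ at the closed point $y$ of its special fibre produces a regular scheme, and since $\pi$ is proper and birational it is an isomorphism over the generic fibre, so $\mathcal{Y}'_F\cong Y$. Thus $N(\mathcal{Y}',K(X))$ makes sense and $S(\mathcal{Y}',\pi^*\mathcal{L},\pi^*s)$ is a finite cover of a regular base. The key formal input is that pullback of quasi-coherent sheaves of algebras is symmetric monoidal, giving $\pi^*\bigl(Sym^\bullet\mathcal{L}^\vee/(y^2-s)\bigr)\cong Sym^\bullet(\pi^*\mathcal{L})^\vee/(y^2-\pi^*s)$, and therefore
\[
S(\mathcal{Y}',\pi^*\mathcal{L},\pi^*s)\;\cong\;\mathcal{Y}'\times_{\mathcal{Y}}S(\mathcal{Y},\mathcal{L},s)\;=\;\mathcal{Y}'\times_{\mathcal{Y}}\mathcal{X}.
\]

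From this fibre-product description two of the three defining properties of the normalization are immediate. The first projection $\varphi'\colon \mathcal{Y}'\times_{\mathcal{Y}}\mathcal{X}\to\mathcal{Y}'$ is the base change of the finite, flat degree-two morphism $\varphi$, hence is itself finite and flat of degree two. The second projection $\psi\colon\mathcal{Y}'\times_{\mathcal{Y}}\mathcal{X}\to\mathcal{X}$ is the base change of $\pi$, hence proper, birational, and an isomorphism over $\mathcal{X}\smallsetminus\varphi^{-1}(y)$; since $\varphi'$ is finite flat of constant rank every irreducible component dominates the integral scheme $\mathcal{Y}'$, and over its generic point the fibre is $\mathrm{Spec}$ of the quadratic field $K(X)$, so $\mathcal{Y}'\times_{\mathcal{Y}}\mathcal{X}$ is irreducible with function field $K(X)$. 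Consequently, once normality is established, the universal property of the normalization of $\mathcal{Y}'$ in $K(X)$ forces $S(\mathcal{Y}',\pi^*\mathcal{L},\pi^*s)=N(\mathcal{Y}',K(X))$.

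It therefore remains to prove that $S(\mathcal{Y}',\pi^*\mathcal{L},\pi^*s)$ is normal, which I would do via Serre's criterion $R_1+S_2$. The condition $S_2$ is automatic: $\varphi'$ is finite and flat onto the regular, hence Cohen--Macaulay, scheme $\mathcal{Y}'$, and a finite flat algebra over a Cohen--Macaulay base is Cohen--Macaulay. For $R_1$, away from the exceptional divisor $E=\pi^{-1}(y)$ the map $\pi$ is an isomorphism, so there $S(\mathcal{Y}',\pi^*\mathcal{L},\pi^*s)$ agrees with the already normal scheme $\mathcal{X}$; the only new codimension-one point is the generic point $\eta_E$ of $E$. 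Using Equation \ref{Equation: Trivialization} one has $\mathcal{O}_{\mathcal{Y}',\eta_E}\otimes\mathcal{O}_{S}\cong\mathcal{O}_{\mathcal{Y}',\eta_E}[z]/(z^2-\pi^*\alpha)$, and an elementary computation over the discrete valuation ring $\mathcal{O}_{\mathcal{Y}',\eta_E}$ shows this is again a discrete valuation ring exactly when $v_{\eta_E}(\pi^*\alpha)\le 1$ (if $v_{\eta_E}(\pi^*\alpha)\ge 2$ the element $z/(\text{local equation of }E)^{\lfloor v/2\rfloor}$ is integral but not in the ring, so $R_1$ fails).

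The main obstacle is precisely this valuation computation along $E$. Because $y$ is a smooth point of $\mathcal{Y}_s$, the local ring $\mathcal{O}_{\mathcal{Y},y}$ is regular two-dimensional and $\pi$ is the blowup of its maximal ideal, so $\pi^*\mathfrak{m}_y=\mathcal{O}_{\mathcal{Y}'}(-E)$ and $v_{\eta_E}(\pi^*s)$ equals the multiplicity $\operatorname{mult}_y(s)$ of the local equation of $s$ at $y$. The real content of the lemma is thus that this multiplicity is controlled: the exceptional component $E$ must not occur in $\operatorname{div}(\pi^*s)$ with multiplicity exceeding one, equivalently (in the language of Lemma \ref{Lemma:Trivialization-of-the-covering-using-vanishing-order} and the parity conditions of Lemma \ref{Lemma:Simultaneous-desingularization}) that $E$ does not create a new odd component of multiplicity $\ge 3$ nor an unnormalized even component. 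I would therefore finish by reading off $\operatorname{mult}_y(s)$ from the trivialization $\alpha_y$ at $y$, checking that it is at most one under the standing hypotheses on the center $y$, and concluding $R_1$; normality of $S(\mathcal{Y}',\pi^*\mathcal{L},\pi^*s)$ then yields the claimed identification with $N(\mathcal{Y}',K(X))$.
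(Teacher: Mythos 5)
Your reduction of the statement to a normality check is the right frame, and everything up to the last step is sound: $S(\mathcal{Y}',\pi^*\mathcal{L},\pi^*s)\cong\mathcal{Y}'\times_{\mathcal{Y}}\mathcal{X}$ is finite flat of degree two over the regular scheme $\mathcal{Y}'$, is integral with function field $K(X)$, is $S_2$, and is regular in codimension one away from $\varphi'^{-1}(E)$. The gap is the final step: the bound $v_{\eta_E}(\pi^*s)\le 1$ that you need for $R_1$ along $\varphi'^{-1}(E)$ is not implied by the hypotheses, and there are no ``standing hypotheses on the center $y$'' beyond smoothness in $\mathcal{Y}_s$ that could supply it. Since $\pi$ is the blowup of the maximal ideal $\mathfrak{m}_y$ of a two-dimensional regular local ring, $v_{\eta_E}(\pi^*s)=\operatorname{mult}_y(s)$, and this exceeds $1$ precisely in the situations the lemma is designed for: at a critical point where a cluster of size $\ge 2$ and depth $\ge 1$ specializes (cf.\ Remark \ref{Remark: Polynomial type of the local trivialization} and Construction \ref{Construction:Y^{disc}}). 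Concretely, take $\hat t_y=w^2-\pi_F^3$ (a twin of depth $3/2$): the ring $\mathcal{O}_F[[w,z]]/(z^2-w^2+\pi_F^3)$ is normal, since its singular locus is the closed point, but on the chart $w=\pi_F v$ of the blowup one gets $z^2=\pi_F^2(v^2-\pi_F)$, and $z/\pi_F$ is integral over this ring without lying in it. So $S(\mathcal{Y}',\pi^*\mathcal{L},\pi^*s)$ is not normal there and cannot equal $N(\mathcal{Y}',K(X))$; your proposed verification that $\operatorname{mult}_y(s)\le 1$ cannot succeed.

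To your credit, you have put your finger on exactly the point the paper's own proof elides: the paper writes $\varphi^{-1}(U_i)=\mathrm{Spec}(A_{(f_i)}[f_i^{-1}f_j]_j[z]/(z^2-a))$ on the affine charts of the blowup and then asserts without argument that this ring ``is the normalization of $U_i$ in $K(X)$,'' which fails whenever $a\in\mathfrak{m}_y^2$. The statement needs repair rather than a cleverer proof: either weaken the conclusion to say that $N(\mathcal{Y}',K(X))$ is the normalization of $S(\mathcal{Y}',\pi^*\mathcal{L},\pi^*s)$, or set $m=\operatorname{mult}_y(s)$, twist to the pair consisting of $\pi^*\mathcal{L}\otimes\mathcal{O}_{\mathcal{Y}'}(-\lfloor m/2\rfloor E)$ and the section obtained from $\pi^*s$ by dividing by the $2\lfloor m/2\rfloor$-th power of a local equation of $E$; with that modification the residual multiplicity along $E$ is $0$ or $1$ and your $R_1$ argument closes. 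Either corrected form still supports the later use of the lemma, since only the parity of the multiplicity of $\operatorname{div}(\pi^*s)$ along $\Gamma_D$ enters Lemma \ref{Lemma:Vanishing-order-mod-2-on-Y^{disc}}, and that parity is unchanged by removing the square part.
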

    \begin{proof}
            Let $U=Spec(A)\subset \mathcal{Y}$, be an affine neighborhood of $y$ where the line bundle $\mathcal{L}$ trivializes. Assume that the restriction of the section $s$ on $Spec(A)$ is given by $a\in A.$ Then, we get that $K(X)=Frac(A)(\sqrt{a}),$ and $\varphi^{-1}(U)=Spec(A[z]/(z^2-a))$. Let $m\subset A$ be the maximal ideal corresponding to the point $y$ and assume that this maximal ideal is generated by $\{f_1,\dots, f_n\}\subset A.$ \par 
            We can cover $\pi^{-1}(U)$ by open affine subsets $U_i:=Spec(A_{(f_i)}[f_i^{-1}f_j]_j)$ (\cite[Lemma 8.1.2]{Liu-Alg-Geom}). As $\pi^*(\mathcal{L})\vert_{\pi^{-1}(U_i)}$ is also trivial, $\pi^*{\mathcal{L}^{\otimes 2}}(U_i)=A_{(f_i)}[f_i^{-1}f_j]_j$. Moreover, the restriction of $s$ on $U_i$ is given by $a\in Spec(A)_{(f_i)}[f_i^{-1}f_j]_j$. Therefore, 
            \begin{align*}
                \varphi^{-1}(U_i)=Spec(
                A_{(f_i)}[f_i^{-1}f_j]_j[z]/(z^2-a)),
            \end{align*}
            which is the normalization of $U_i$ in $K(X).$
    \end{proof}
    \subsection{Moduli space of Higgs bundles}\label{Subsection:Prelimeneries on the moduli space of Higgs bundles}
    In this subsection we assume that $Y/F$ is a smooth curve, $\mathcal{Y}/\mathcal{O}_F$ a smooth proper model of $Y$ and $\mathcal{L}$ is a line bundle on $\mathcal{Y}.$
    \begin{definition}
        A rank $n$ $\mathcal{L}$-twisted Higgs bundle on $\mathcal{Y}$ with coefficients in $\mathcal{L}$, is a pair $(E,\theta)$, where $E$ is a rank $n$ vector bundle and $\theta$ is a global section of $End(E)\otimes \mathcal{L}.$ A Higgs bundle $(E,\theta)$ is (semi-)stable, if for any sub-bundle $F\subset E$, such that $\theta(F)\subset F\otimes \mathcal{L}$, then:
        \begin{align*}
            \frac{deg(F)}{rank(F)}&<\frac{deg(E)}{rank(E)}.\\
            (&\leq)
        \end{align*}
    \end{definition}
    \begin{definition}\label{Definition:Hitchin fibration}
        The Hitchin map is defined by
        \begin{align*}
            h\colon \mathcal{M}_{\mathcal{Y}}^\mathcal{L}&(n,d)\longrightarrow \mathcal{A}_{\mathcal{Y}}^\mathcal{L}(n,d):=\oplus_{i=1}^rH^{0}(\mathcal{Y},\mathcal{L}^{\otimes i})\\
            &(E,\theta)\longmapsto ((-1)^i Tr(\wedge^i\theta)).
        \end{align*}
        The affine space $\mathcal{A}_{\mathcal{Y}}^\mathcal{L}(n,d)$ is called the Hitchin base.
    \end{definition}
    \begin{remark}\label{Remark: Spectral correspondence}
        For a generic choice $\alpha=(\alpha_1,\dots,\alpha_n)\in \mathcal{A}_{\mathcal{Y}}^\mathcal{L}(n,d)$, we have
        \begin{align*}
            h^{-1}(\alpha)\cong Jac(\mathcal{Y}_\alpha),
        \end{align*}
        where $\mathcal{Y}_\alpha$ is the spectral cover of $\mathcal{Y}$ given by 
            \begin{align*}
                \mathcal{Y}_\alpha:=\underline{Spec}(Sym^\bullet\mathcal{L}^\vee/(y^n-\alpha_1y^{n-1}-\dots+ (-1)^n\alpha_n)).
            \end{align*}
    \end{remark}
    \begin{remark}\label{Remark: Rank 2 Higgs bundles}
        In the case of the rank two moduli space of Higgs bundles, the Hitchin map is given by
        \begin{align*}
            h\colon\mathcal{M}_{\mathcal{Y}}^\mathcal{L}&(2,d)\longrightarrow \mathcal{A}_{\mathcal{Y}}^\mathcal{L}(2,d)\\
                &(E,\theta)\longmapsto (Tr(\theta),det(\theta)).
        \end{align*}
        The spectral curve determined by $\alpha:=(\alpha_1,\alpha_2)$ is the closed subscheme of $Tot(\mathcal{L})$ given by $\mathcal{Y}_\alpha=(Sym^\bullet\mathcal{L}^\vee/(y^2-\alpha_1 y+ \alpha_2)).$
        By a change of variable we can always assume that $\alpha_1$ is zero. Note that $\mathcal{Y}_\alpha$ is a degree two cover of $\mathcal{Y}$. Moreover, $\mathcal{Y}_{\alpha}$ admits a natural involution action, quotient of which is isomorphic to $\mathcal{Y}.$
    \end{remark}
    \subsection{p-adic volume}
    Throughout this subsection we impose the extra condition that $k_F$ is a finite field. An $F$-analytic manifold is a Hausdorff and second countable topological space together with a choice of maximal atlas. In this setting one requires transition functions to be bi-analytic. The notion of top degree differential forms over an $F$-analytic manifold, is defined similarly as in the case of complex and real manifolds. Moreover, as $F^d$ is a locally compact topological group, it admits a Haar measure. Using this measure, one defines the notion of $p$-adic integration over $F$-analytic manifolds.\par  
    Given a smooth $\mathcal{O}_F$-variety $\mathcal{X}$, induced with its analytic topology, $\mathcal{X}(\mathcal{O}_F)$, has a natural $F$-analytic manifold structure. The following theorem of Weil relates the $p$-adic volume of this manifold, to the point count of its special fiber.
    \begin{theorem}(Weil                               1982,\cite{Weil})\label{weil-canonicalmeasure}
        The $F$-analytic manifold $\mathcal{X}(\mathcal{O}_F)$ admits a canonical measure $\mu_\mathcal{X}$ such that:
        \begin{align*}
            \mbox{Vol}_{\mu_\mathcal{X}}(\mathcal{X}(\mathcal{O}_F))=(\frac{1}{Card({k})})^d \mathcal{X}_s(k),
        \end{align*}
        where $d$ denotes the dimension of $\mathcal{X}_s:=\mathcal{X}\times_{\mathcal{O}_F}Spec{(k)}.$
    \end{theorem}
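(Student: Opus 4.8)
The plan is to build the canonical measure $\mu_{\mathcal{X}}$ out of local trivializations of the relative canonical bundle and then to compute the total volume by slicing $\mathcal{X}(\mathcal{O}_F)$ along the reduction map. Since $\mathcal{X}/\mathcal{O}_F$ is smooth of relative dimension $d$, the sheaf $\Omega^d_{\mathcal{X}/\mathcal{O}_F}$ is invertible. On a trivializing open $U\subset\mathcal{X}$ I would fix a generator $\omega_U$, a nowhere vanishing relative top form, and on $U(\mathcal{O}_F)$ define $|\omega_U|$ by writing $\omega_U=g\,dz_1\wedge\cdots\wedge dz_d$ in analytic local coordinates and setting $|\omega_U|:=|g|\,d\lambda$, where $\lambda$ is the Haar measure on $F^d$ normalized so that $\mathcal{O}_F^d$ has volume $1$ and $|\cdot|$ is the normalized absolute value with $|\pi_F|=q^{-1}$, $q=\mathrm{Card}(k)$. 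The $p$-adic change of variables formula shows this is independent of the analytic chart. The decisive point for globalizing is that on an overlap two generators satisfy $\omega_V=u\,\omega_U$ with $u\in\mathcal{O}_{\mathcal{X}}^{\times}(U\cap V)$, so for every $x\in(U\cap V)(\mathcal{O}_F)$ one has $u(x)\in\mathcal{O}_F^{\times}$ and hence $|u(x)|=1$; the local measures therefore agree on overlaps and glue to a global measure $\mu_{\mathcal{X}}$.

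Next I would study the reduction map $\mathrm{red}\colon\mathcal{X}(\mathcal{O}_F)\to\mathcal{X}_s(k)$. Smoothness of $\mathcal{X}\to\mathrm{Spec}(\mathcal{O}_F)$ gives the infinitesimal lifting property, so Hensel's lemma shows every $\bar x\in\mathcal{X}_s(k)$ lifts to an $\mathcal{O}_F$-point and $\mathrm{red}$ is surjective. Consequently $\mathcal{X}(\mathcal{O}_F)$ is the disjoint union of the residue disks $\mathrm{red}^{-1}(\bar x)$, indexed by the finitely many points $\bar x\in\mathcal{X}_s(k)$ (finiteness being where $k$ finite is used).

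The heart of the computation is to evaluate $\mu_{\mathcal{X}}(\mathrm{red}^{-1}(\bar x))$. Fixing a lift $x$, smoothness yields an open $V\ni x$ together with an \'etale $\mathcal{O}_F$-morphism $\psi\colon V\to\mathbb{A}^d_{\mathcal{O}_F}$, i.e.\ a system of relative coordinates $t_1,\dots,t_d$. \'Etaleness identifies $\mathrm{red}^{-1}(\bar x)$ analytically with a residue disk in $\mathbb{A}^d(\mathcal{O}_F)=\mathcal{O}_F^d$, namely a coset $a+(\pi_F\mathcal{O}_F)^d$. Since $\psi^{*}(dt_1\wedge\cdots\wedge dt_d)$ is again a generator of $\Omega^d_{V/\mathcal{O}_F}$, it differs from $\omega_V$ by a unit, so $\psi$ is measure preserving for the canonical measures, and the standard measure of the coset is $|\pi_F|^d=q^{-d}$. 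Hence each residue disk has volume $q^{-d}$, and summing over the $\#\mathcal{X}_s(k)$ disks gives $\mathrm{Vol}_{\mu_{\mathcal{X}}}(\mathcal{X}(\mathcal{O}_F))=q^{-d}\,\#\mathcal{X}_s(k)$, as claimed.

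I expect the main obstacle to be precisely this residue-disk analysis: proving that $\mathrm{red}$ is surjective with fibers that are genuine polydisks and that an \'etale coordinate chart identifies each fiber with a standard coset in a measure-preserving way. This is where smoothness enters in an essential way, through Hensel's lemma for surjectivity and through the \'etale Jacobian being a unit in the $p$-adic change of variables formula for the volume. By contrast, the gluing step is conceptually essential but technically immediate once one notes that integral units have absolute value one, which is exactly the reason the construction must take place over $\mathcal{O}_F$ rather than over $F$.
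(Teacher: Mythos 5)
The paper states this as a classical result of Weil and gives no proof of its own, so there is nothing internal to compare against; your argument is the standard proof (glue $|\omega|$ for local generators of $\Omega^d_{\mathcal{X}/\mathcal{O}_F}$ using that units have absolute value $1$ on integral points, then fiber $\mathcal{X}(\mathcal{O}_F)$ over $\mathcal{X}_s(k)$ and show each residue disk has volume $q^{-d}$ via an \'etale chart) and it is correct. All the essential ingredients --- smoothness for invertibility of the canonical sheaf and for Hensel lifting, and the unit Jacobian in the change-of-variables formula --- are in place.
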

    Given an abelian variety, its $p$-adic volume can be understood by studying its N\'eron model (defined below).
    \begin{definition}\cite{BLR}
        Suppose $A/F$ is an abelian variety. Its N\'eron model $\mathcal{A}/\mathcal{O}_F$ is a smooth, separated and finite type group scheme with the universal property that given any smooth scheme $\mathcal{Y}/\mathcal{O}_F$, any morphism $\phi_F\colon \mathcal{Y}_F\rightarrow \mathcal{A}_F$ extends uniquely to $\phi\colon \mathcal{Y}\rightarrow \mathcal{A}. $
    \end{definition}
     As a direct consequence of the above definition, we see that $A(F)=\mathcal{A}(\mathcal{O}_F).$ The lemma given below, relates the $p$-adic volume of $A(F)$ to the Tamagawa number of its N\'eron model:
     \begin{lemma}(\cite{GM}, section 3.3)\label{Lemma: Normalized-volume}
        Suppose $\omega\in \Omega^{top}_{A/F}.$ This differential form induces a top degree differential form on $A(F)$ which we also denote by $\omega.$ Then, there exist a positive rational number $c_\omega(A)$ called the conductor, such that:
        \begin{align*}
            V_\omega(A):=\int_{A(F)}\lvert \omega \rvert = c_\omega(A).\frac{\lvert\mathcal{A}_s^0 \rvert.\lvert\Upphi_A(k_F) \rvert}{q^d}.
        \end{align*}
        Here, $\mathcal{A}^0_{s}$ denotes the connected component of the identity in $\mathcal{A}_s$ and  $\Upphi_A$ is a finite \'etale group scheme called the group of connected components of $\mathcal{A}^0_s.$
    \end{lemma}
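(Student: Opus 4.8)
The plan is to reduce the statement to Weil's computation of the canonical measure (Theorem \ref{weil-canonicalmeasure}) by transporting the integral from $A(F)$ to the $\mathcal{O}_F$-points of the N\'eron model. Applying the N\'eron mapping property to $Spec(\mathcal{O}_F)$ itself, the identity on the generic fibre extends uniquely, giving the identification $A(F)=\mathcal{A}(\mathcal{O}_F)$; since $\mathcal{A}$ is smooth, separated and of finite type and $A$ is proper over $F$, the set $\mathcal{A}(\mathcal{O}_F)$ is a compact open $F$-analytic submanifold of $A(F)$, so $\int_{A(F)}\lvert\omega\rvert$ is finite. The first step is therefore to rewrite the integral over $\mathcal{A}(\mathcal{O}_F)$.

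Next I would compare $\omega$ with an invariant generator. As $\mathcal{A}/\mathcal{O}_F$ is a smooth commutative group scheme, its relative canonical sheaf $\Omega^{top}_{\mathcal{A}/\mathcal{O}_F}$ is free of rank one, trivialized by a translation-invariant (N\'eron) differential $\omega_0$ that is nowhere vanishing on all of $\mathcal{A}$ (\cite{BLR}). On the generic fibre both $\omega$ and $\omega_0|_A$ are global translation-invariant top forms on $A$, and $H^0(A,\Omega^{top}_{A/F})$ is one-dimensional, so $\omega=\lambda\,\omega_0$ for a unique $\lambda\in F^{\times}$, where $\lambda$ is well defined modulo $\mathcal{O}_F^{\times}$ because $\omega_0$ is unique up to a unit. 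I would then set $c_\omega(A):=\lvert\lambda\rvert_F=q^{-v_F(\lambda)}$, which is manifestly a positive rational number depending only on $\omega$, and use the factorization $\int_{A(F)}\lvert\omega\rvert=\lvert\lambda\rvert_F\int_{\mathcal{A}(\mathcal{O}_F)}\lvert\omega_0\rvert$.

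Because $\omega_0$ is a nowhere-vanishing top form on the smooth scheme $\mathcal{A}$, the associated measure $\lvert\omega_0\rvert$ is precisely the Weil canonical measure $\mu_{\mathcal{A}}$ of Theorem \ref{weil-canonicalmeasure}. That theorem applies even though $\mathcal{A}$ need not be proper, since the reduction map $\mathcal{A}(\mathcal{O}_F)\to\mathcal{A}_s(k_F)$ is surjective with each fibre a ball of volume $q^{-d}$ by smoothness and Hensel's lemma, giving
\begin{align*}
    \int_{\mathcal{A}(\mathcal{O}_F)}\lvert\omega_0\rvert = \frac{\lvert\mathcal{A}_s(k_F)\rvert}{q^d}.
\end{align*}
It remains to factor the point count of the special fibre. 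The identity component sits in the exact sequence $0\to \mathcal{A}_s^0\to \mathcal{A}_s\to \Upphi_A\to 0$ of group schemes over $k_F$; since $\mathcal{A}_s^0$ is a connected smooth algebraic group over the finite field $k_F$, Lang's theorem gives $H^1(k_F,\mathcal{A}_s^0)=0$, so passing to $k_F$-points preserves exactness and $\lvert\mathcal{A}_s(k_F)\rvert=\lvert\mathcal{A}_s^0(k_F)\rvert\cdot\lvert\Upphi_A(k_F)\rvert$. Combining the three displays yields the asserted formula.

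The main obstacle, and the only genuinely non-formal point, is the interface between the N\'eron differential and Weil's canonical measure: one must verify that the invariant differential $\omega_0$ generates $\Omega^{top}_{\mathcal{A}/\mathcal{O}_F}$ \emph{integrally} and is nowhere vanishing on the (possibly non-proper) special fibre, so that $\lvert\omega_0\rvert$ equals $\mu_{\mathcal{A}}$ exactly rather than differing from it by an extra power of $q$; this is exactly what makes $c_\omega(A)$ depend only on $\omega$. The Lang-theorem splitting of the point count and the one-dimensional comparison $\omega=\lambda\,\omega_0$ are standard and present no difficulty.
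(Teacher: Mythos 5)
Your argument is correct. Note that the paper does not actually prove this lemma; it is quoted verbatim from \cite{GM}, Section 3.3, so there is no in-paper proof to compare against. Your derivation --- identifying $A(F)$ with $\mathcal{A}(\mathcal{O}_F)$ via the N\'eron mapping property, writing $\omega=\lambda\,\omega_0$ against a N\'eron differential generating the (free, rank one) sheaf $\Omega^{top}_{\mathcal{A}/\mathcal{O}_F}$ so that $c_\omega(A)=\lvert\lambda\rvert_F$, invoking Theorem \ref{weil-canonicalmeasure} for the smooth (not necessarily proper) scheme $\mathcal{A}$, and splitting $\lvert\mathcal{A}_s(k_F)\rvert=\lvert\mathcal{A}_s^0(k_F)\rvert\cdot\lvert\Upphi_A(k_F)\rvert$ via Lang's theorem --- is the standard one and is essentially what the cited reference does; the integrality of the N\'eron differential, which you correctly single out as the one non-formal point, is supplied by the canonical isomorphism $\Omega^1_{\mathcal{A}/\mathcal{O}_F}\cong\pi^*e^*\Omega^1_{\mathcal{A}/\mathcal{O}_F}$ for smooth group schemes in \cite{BLR}.
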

    The size of $\Upphi_A(k_F)$ is called the Tamagawa number of $A$. By the previous lemma, computation of the $p$-adic volume of abelian varieties can be reduced to understanding their Tamagawa number.
    In the case that $A$ is the Jacobian of a semi-stable curve $Y/F$, $\vert \Upphi_{Jac(Y)}(k_F)\vert$ can be computed by
    studying the action of the absolute Galois group on the dual graph of the minimal regular model of $Y$. The main reference for this part is \cite{BETTS}. 
    \begin{definition}\label{Definition: Dual-Graph}
        Let $\mathcal{Y}^{min}/\mathcal{O}_F$ denote the minimal regular model of $Y$ over $F$. Let $\mathcal{Y}^{min}_s$ denote the special fiber of this model. Assume that $\{\Gamma_1,\dots ,\Gamma_n\}$ are the irreducible components of $(\mathcal{Y}^{min}_s)_{\overline{k_F}}$. The dual graph associated to $Y$, which denote by $Gr(Y)$ is defined as follow:
        \begin{itemize}
            \item Vertices of $Gr(Y)$ corresponds to the irreducible components of $(\mathcal{Y}^{min}_s)_{\overline{k_F}}$.
            \item given vertices $v_i,v_j\in V(Gr(X))$, the number of edges between $v_i$ and $v_j$ corresponds to the number of intersection points between $\Gamma_{i}$ and $\Gamma_{j}$.
        \end{itemize}
    \end{definition}
    \begin{remark}\label{Remark: Dual-graph using maximally unramified}
        Alternatively, one can define the dual graph $Gr(Y)$ by considering the minimal regular model of $Y$ over $F^{ur}$. Let $\mathcal{Y}/\mathcal{O}_{F^{ur}}$, denote the minimal regular model of $Y$ over $F^{ur},$ where $F^{ur}$ is a maximally unramified extension of $F$ inside a choice of an algebraic closure. Then, vertices of $Gr(Y)$ corresponds to the irreducible components of the special fiber $\mathcal{Y}_s$. Let $v_i$ $v_j$ be vertices in $Gr(Y)$, corresponding to the irreducible components $\Gamma_i,\Gamma_j\subset \mathcal{Y}_s$. Then, the number of edges between $v_i$ and $v_j$ is given by the number of the points in the intersection $\Gamma_i\cap \Gamma_j.$ Note that $Gr(Y)$ is a connected graph, since $\mathcal{Y}_s$ is connected. It is possible for $Gr(Y)$ to have loops (edges with the same endpoints). However, we assume that $Gr(Y)$ is loopless, as deleting loops will not effect the Tamagawa number of the graph $Gr(Y).$
    \end{remark}
    The absolute Galois group $G_F$ acts on $(\mathcal{Y}^{min}_s)_{\overline{k_F}}$, which induces an action on $Gr(X)$. This action, determines the Tamagawa number of $Jac(Y).$ Note that this action is unramified, and hence it is determined by a choice of Frobenius $Frob\in G_F$. In the remaining of this section, we summarize how to compute the Tamagawa number of $\vert \upphi_{Jac(Y)}(k_F) \vert$, using the action of $Frob$ on $Gr(Y)$.\par
    Let $\Lambda:=H_1(Gr(Y),\mathbb{Z})$ denote the first homology group of graph $Gr(Y)$ (\cite[Definition 2.1.1]{BETTS}). Fixing an orintation on $Gr(Y)$, gives an integer valued intersection pairing on $\Lambda$, which in turn induces an injection $\Lambda\hookrightarrow \Lambda^\vee$ (see \cite[2.1.1]{BETTS}, or \cite[Section 2.2]{DDMM2}). The jacobian of the graph $Gr(Y)$, is defined by $Jac_{Gr(Y)}=\Lambda^{\vee}/\Lambda.$ This construction is independent of the choice of orientation and it is functorial with respect to graph isomorphisms (\cite[Proposition 2.1.3]{BETTS}). In particular, the action of $Frob$ on $Gr(Y)$, induces an action on $Jac_{Gr(Y)}$. By \cite[Theorem 2.1.8]{BETTS}, we have the following equality:
    \begin{align}\label{Equation: Computation of Tamagawa number}
        \vert \upphi_{Jac(Y)}(k_F)\vert = \vert Jac_{Gr(Y)}^{Frob}\vert,
    \end{align}
    where $Jac_{Gr(Y)}^{Frob}$ is the subgroup of $Frob$ invariant elements of $Jac_{Gr(Y)}.$
    \begin{remark}\cite{LP}
        Assume that $F=F^{ur}$ and $X/F$ is a smooth curve that has semi-stable reduction over $F$. Let $\mathcal{Y}/\mathcal{O}_F$ be the minimal regular model $X$. Then,
        \begin{align*}
            \vert \Upphi_{Jac(X)}(k_F)\vert = \vert Jac(Gr(Y))\vert,
        \end{align*}    
        which is equal to the number of maximal spanning trees of $Gr(Y).$
    \end{remark}
    \section{Semi-stability criterion}
     Throughout this section, we will always work with the assumptions and notations given in Assumption
     \ref{Assumption:Main assumption}.
    \begin{assumptions}\label{Assumption:Main assumption}
        Let $\varphi\colon X\rightarrow Y$ be a degree 2 Galois cover of smooth curves. Moreover, assume that $g(Y)>0$ and $Y$ admits a proper smooth model over $Spec(\mathcal{O}_F)$, which we denote by $\mathcal{Y}^{min}$. We let $B\subset Y$ denote the branch locus of this covering and set $L:=F(B)$, where by $F(B)$ we mean the compositum of residue fields of points in $B.$ We denote the degree of this field extension by $e:=[L:F].$ We also assume that $F=F^{ur}$, where by $F^{ur}$ we mean the maximally unramified field extension of $F$ inside an algebraic closure of $F$. We can impose this assumption as the reduction type of a curve is not affected by unramified extensions.
    \end{assumptions}   
    \subsection{Cluster pictures}
    In this subsection, we introduce the notion of the cluster picture assoicated to the covering $\varphi\colon X\rightarrow Y$. Cluster pictures for hyperelliptic curves are introduced in \cite{DDMM} using Weierstrass equations. The definition in this subsection, is an attempt to capture the properties of the cluster picture as given in \cite{DDMM}, for the case where  $Y$ is not necessarily a rational curve. Even in the case where $Y$ is a rational curve, our notion of cluster picture slightly differs from the one given in loc. cit. (see Example \ref{Example: Difference between the notion of cluster pictures}). For example, the data of the branch point at infinity is evident in the depiction of our cluster picture. However, these differences are formal, and both notions of cluster picture for hyperelliptic curves are equivalent. \par
    Consider the following sequence of maps:
    \[
        \begin{tikzcd}
            B(\mathcal{O}_L/m_L) & B(\mathcal{O}_L/m_L^2) \arrow[l]                  & \dots \arrow[l] & B(\mathcal{O}_L/m_L^n) \arrow[l] & \dots \\
                     & B(\mathcal{O}_L) \arrow[lu] \arrow[u] \arrow[rru] &                 &                                  &       \\
                     & {B(L),} \arrow[u, "\cong"]                        &                 &                                  &      
        \end{tikzcd}
    \]
    where the morphism $B(L)\rightarrow B(\mathcal{O}_L)$ is the isomorphism provided by the valuative criterion of properness. For $i\geq 1$ We denote the map from $B(\mathcal{O}_L)$ to $B(\mathcal{O}_L/{m_L^i})$, by 
    \begin{align*}
        p_i\colon B(\mathcal{O}_L)\rightarrow  B(\mathcal{O}_L/{m_L^i}).
    \end{align*}
    By a cluster we mean a set of the form $\textbf{s}=p_i^{-1}(p_i(y))$ for some $y\in B(L)$, where $\vert \textbf{s}\vert\geq 2.$
    The \textbf{depth of a cluster} $\textbf{s}$ is given by
    \begin{align}\label{Equation: Depth of a cluster}
        d_\textbf{s}:=\frac{max\{i\in \mathbb{Z}_{\geq 0}\vert \textbf{s}=p_i^{-1}(p_i(y)),\mbox{\quad for some $y\in B(L)$}\}}{e}.
    \end{align}
    We call a cluster of size $\vert\textbf{s}\vert \geq 3$ a \textbf{principal cluster}. A cluster of size two is called a \textbf{twin cluster}. We call a cluster an \textbf{odd cluster} (resp. \textbf{even cluster}), if the number of elements in $\textbf{s}$ is odd (resp. even). The \textbf{parent} of a cluster $\textbf{s}$, denoted by $P(\textbf{s})$, is the smallest cluster distinct from $\textbf{s}$ containing it. In this case we say that $\textbf{s}$ is a \textbf{child} of $P(\textbf{s})$. A cluster $\textbf{s}$ is called \"ubereven, if all of its children are even. We always assume that all clusters in $B(L)$ are contained in a cluster of depth zero. We denote this cluster by $\textbf{s}_0$. A proper cluster $\textbf{s}$ is called maximal, if $P(\textbf{s})=\textbf{s}_0.$ Given an arbitrary cluster $\textbf{s}\neq \textbf{s}_0$, we denote the maximal cluster containing $\textbf{s}$ by $\textbf{s}_{max}.$\par
    \begin{definition}\label{Definition:invariant v}
        We denote the unique irreducible component of $\mathcal{Y}^{min}$ by $C_0$ and we define $v_{\varphi}:=v_\varphi(C_0)$ as defined in Definition \ref{Definition:Order-of-vanishing}. For any cluster $\textbf{s}$ in $\Sigma_{X/Y}$, define
        \begin{align}
            v_\textbf{s}=v_\varphi + \sum_{r\in \textbf{s}_0} d_{r\wedge \textbf{s}},
        \end{align}
        where $r\wedge b$ is the smallest cluster in $\Sigma_{X/Y}$ containing $b$ and $r.$
    \end{definition}
    The collection of clusters gives a nested collection of subsets of $B(L)$. The cluster picture of the covering $\varphi\colon X\rightarrow Y$, denoted by $\Sigma_{X/Y},$ is the data of the collection of clusters together with the numerical invariants $d_\textbf{s}$ and $v_{\textbf{s}}$ associated to each cluster. For convenience, when it is clear from the context, we ignore the numerical invariants $d_\textbf{s}$ and $v_{\textbf{s}}$, and we view $\Sigma_{X/Y}$ as a set whose elements are among subsets of $B(L).$
    \begin{remark}\label{Remark:cluster-picture-for-arbitrary-proper-model.}
        We can define the notion of the cluster picture of the covering $\varphi\colon X\rightarrow Y$ with respect to an arbitrary regular proper model of $Y$. In fact, in proving the semi-stability criterion of Theorem \ref{Theorem: Semi-stability-criterion-for 2:1-covers} we need to consider the cluster picture of the covering $\varphi$, with respect to some regular models of $Y$ dominating $\mathcal{Y}^{min}.$ These regular models are constructed inductively using the cluster picture $\Sigma_{X/Y}.$
    \end{remark}
    \begin{example}\label{Example: Difference between the notion of cluster pictures}
        Let $p$ be a prime distinct from 2. Consider the hyperelliptic curve given by the Weierstrass equation:
        \begin{align*}
            C/\mathbb{Q}_p: \quad y^2=x(x-1)(x-1-p^2)(x-1+p^2)(x-p)(x-p^3)(x+p^3).
        \end{align*}
        The cluster picture associated to this hyperelliptic curve in the sense of \cite{DDMM} is given in figure (A), and the cluster picture associated to the covering $C\rightarrow \mathbb{P}^1_F$ is given in figure (B). The distinction between these two figures appear by adding the data of the poles at infinity to the cluster picture. 
        \begin{figure}[h]
            \centering
            \begin{subfigure}[h]{0.2\textwidth}
                \includegraphics[width=\linewidth]{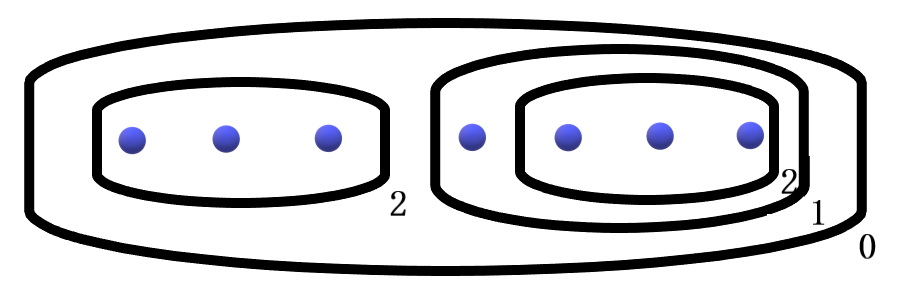}
                \caption{}
            \end{subfigure}
            \hfill
            \begin{subfigure}[h]{0.2\textwidth}
                \centering
                \includegraphics[width=\linewidth]{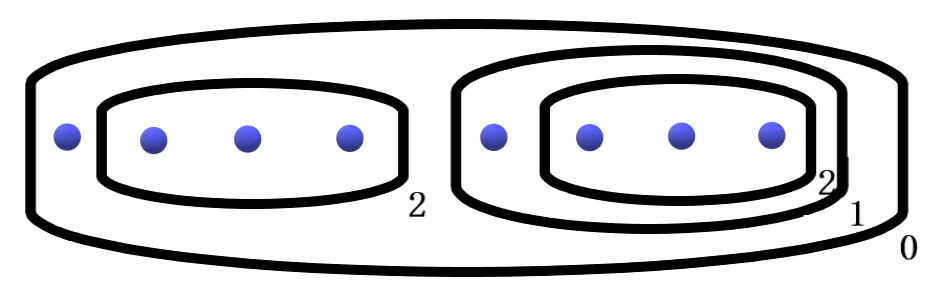}
                \caption{}
                \end{subfigure}
                \caption{Comparing different notion of cluster pictures for hyperelliptic curves.}
                \label{fig:coffee}
            \end{figure}
    \end{example}
    \begin{remark}\label{Remark:Type(A)-Type(B) clusters}
        From now on, whenever we mention the cluster picture associated to a Galois covering, we mean the cluster picture in the sense that is introduced in these notes. However, whenever we talk about a cluster picture associated to a hyperelliptic curve, we mean the cluster picture in the sense of \cite{DDMM}.
    \end{remark}
    \subsection{Semi-stability criterion}\label{Subsection:semi-stabilty-criterion}
    The goal of the remainder of this section, to give a proof of Theorem \ref{Theorem: Semi-stability-criterion-for 2:1-covers}.
    \begin{theorem}\label{Theorem: Semi-stability-criterion-for 2:1-covers}
        The curve $X$ has semi-stable reduction if and only if
        \begin{enumerate}
            \item $v_\varphi$ is even.
            \item $F(B)/F$ is of ramification index at most 2.
            \item Each principal cluster is inertia invariant.
            \item For each principal cluster $\textbf{s}$, $v_\textbf{s}\in 2\mathbb{Z}$ and $d_\textbf{s}\in \mathbb{Z}$.
        \end{enumerate}
    \end{theorem}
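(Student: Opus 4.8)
The plan is to reduce the semi-stability of $X$ to the two necessary and sufficient conditions from the theory of Section \ref{Section: Preliminaries}: first that one can find a regular model $\mathcal{Y}'$ of $Y$ (dominating $\mathcal{Y}^{min}$) whose normalization in $K(X)$ is regular (so that Lemma \ref{Lemma:Simultaneous-desingularization} applies and we actually control $\mathcal{X}$), and second that the resulting special fiber $\mathcal{X}_s$ is reduced with at worst nodal singularities. The strategy is to first pass to such a regular model by iterated blowups governed by the cluster picture $\Sigma_{X/Y}$, then read off reducedness and the nodal condition from the numerical invariants $v_\varphi$, $v_{\textbf{s}}$, $d_{\textbf{s}}$ and the Galois/inertia action, via Lemmas \ref{Lemma:Trivialization-of-the-covering} and \ref{Lemma:Trivialization-of-the-covering-using-vanishing-order}.

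First I would establish the \emph{necessity} of each of the four conditions, which is the more mechanical direction. For (1): by Lemma \ref{Lemma:Trivialization-of-the-covering-using-vanishing-order}(1), the (unique) component $C_0=\mathcal{Y}^{min}_s$ has reduced preimage in $\mathcal{X}_s$ if and only if $v_\varphi=v_\varphi(C_0)$ is even; since semi-stability forces $\mathcal{X}_s$ reduced, $v_\varphi$ must be even. For (2) and (3): these are exactly the statements that the branch divisor, viewed as horizontal divisors meeting $C_0$, can be separated by blowups over a model that remains defined over $F=F^{ur}$ after a ramified extension of degree at most $2$ — if the inertia acted with larger ramification, or permuted the roots within a principal cluster nontrivially, the normalization would acquire a non-reduced or non-nodal component; I would phrase this using the trivialization $\alpha_\xi$ on the exceptional components produced by blowing up the points of $B(L)$. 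For (4): after resolving, each principal cluster $\textbf{s}$ contributes an exceptional component $C_{\textbf{s}}$ whose trivialization has valuation congruent to $v_{\textbf{s}}\bmod 2$ (this is precisely what Definition \ref{Definition:invariant v} is engineered to compute), so reducedness of the preimage of $C_{\textbf{s}}$ forces $v_{\textbf{s}}\in 2\mathbb{Z}$, and $d_{\textbf{s}}\in\mathbb{Z}$ is forced because a non-integral depth signals a component requiring a further ramified base extension to separate, obstructing semi-stability over $F$.

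For \emph{sufficiency}, assuming (1)--(4), I would construct the regular model explicitly. Using Remark \ref{Remark:cluster-picture-for-arbitrary-proper-model.}, I build a tower of blowups $\mathcal{Y}' \to \mathcal{Y}^{min}$ inductively over the nesting of $\Sigma_{X/Y}$: each principal cluster $\textbf{s}$ of depth $d_{\textbf{s}}\in\mathbb{Z}$ dictates a chain of $d_{\textbf{s}}$ blowups separating its children, and Lemma \ref{Lemma: Extenstion-of-the-line-bundle-to-the-blowup-map} guarantees the section $s$ pulls back compatibly so that $\mathcal{X}=N(\mathcal{Y}',K(X))$ is still described as $S(\mathcal{Y}',\pi^*\mathcal{L},\pi^*s)$. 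Condition (3) ensures every such blowup center is inertia-invariant and hence defined over $F$, and condition (2) bounds the ramification so that after at most a quadratic twist the construction descends. Then I verify the two hypotheses of Lemma \ref{Lemma:Simultaneous-desingularization}: conditions (1) and (4) give that every component $C$ with $v_\varphi(C)$ odd is regular and, by the tree structure of the resolved model, odd components are pairwise non-intersecting, so $\mathcal{X}$ is regular; finally, (1) and (4) together with Lemma \ref{Lemma:Trivialization-of-the-covering-using-vanishing-order}(1) show every component of $\mathcal{X}_s$ is reduced, and a local computation of $\mathcal{O}_{\mathcal{X},x}$ at the finitely many intersection points (using the explicit $z^2-\alpha_y$ from Equation \ref{Equation: Trivialization}) shows these are nodal.

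The hard part will be the inductive blowup construction together with the bookkeeping that $v_{\textbf{s}}$ as defined in Definition \ref{Definition:invariant v} really equals the valuation $v_\varphi(C_{\textbf{s}})$ of the trivialization on the exceptional component attached to $\textbf{s}$: this requires tracking how $v_\varphi$ transforms under blowup and verifying that the contributions $\sum_{r\in\textbf{s}_0} d_{r\wedge\textbf{s}}$ correctly accumulate the multiplicities of the branch divisor along the exceptional locus. The subtlety in condition (2) — distinguishing the tame quadratic case from situations requiring genuinely larger (and in characteristic coprime to $2$, tame) ramification — is where the most care is needed, since the descent of the regular model from $F(B)$ back to $F=F^{ur}$ hinges on the ramification index being at most $2$, the degree of the cover itself.
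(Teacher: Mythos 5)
Your overall strategy (build a regular model by cluster-guided blowups, apply Lemma \ref{Lemma:Simultaneous-desingularization}, then read off reducedness and nodality from the trivializations) is genuinely different from the paper's route. The paper factors the theorem through Proposition \ref{Proposition:Semi-stable-if-and-only-if-global-and-local-condition} ($X$ is semi-stable iff $v_\varphi$ is even and every critical point is \emph{refinable}), shows via Lemma \ref{Lemma:Locally-analytic-structure-of-critical-points} that refinability depends only on the formal-local trivialization $\hat t_y\in\mathcal{O}_F[[w]]$, and then uses Lemma \ref{Lemma:Critical-point-can-be-viewed-as-points-in-hyperelliptic-curve} to realize each such trivialization on an auxiliary hyperelliptic curve $y^2=f(w)\hat t_y$, so that conditions (2)--(4) are simply imported from the hyperelliptic criterion of \cite{DDMM}. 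Your plan instead re-derives that criterion from scratch; this is doable in principle, but it is exactly where all the work of \cite{DDMM} lives, and your sketch of the necessity of (2)--(4) (``I would phrase this using the trivialization $\alpha_\xi$ on the exceptional components'') leaves that work undone rather than reducing it to something already known.

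Beyond that, there are two concrete gaps. First, your necessity argument for (1) asserts that ``semi-stability forces $\mathcal{X}_s$ reduced,'' but semi-stability of the curve $X$ only guarantees that \emph{some} model has reduced nodal special fiber; the regular model $\mathcal{X}=N(\mathcal{Y},K(X))$ you work with becomes semi-stable only after contracting exceptional divisors, so a non-reduced component over $C_0$ could a priori be exceptional and disappear in the contraction. The paper closes this by observing that since $g(Y)>0$ the components of $\varphi^{-1}(C_0)$ have positive genus, hence cannot be exceptional, hence must be reduced; Remark \ref{Remark:Classification-of-refinable-critical-points-for-hyperelliptic-curves} shows the claim genuinely fails when $g(Y)=0$, so the positive-genus input cannot be omitted. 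Second, your justification for $d_{\textbf{s}}\in\mathbb{Z}$ (``a non-integral depth signals a component requiring a further ramified base extension'') proves too much: twin clusters of half-integer depth are fully compatible with semi-stability over $F$ (they produce the nodes $uv=\pi_F^r$ of Lemma \ref{Lemma: Singularity types of X^disc} and the type (5) components in Proposition \ref{Proposition:Description-of-minimal-regular-model}), so the integrality constraint applies only to \emph{principal} clusters, and your necessity argument must distinguish twins from principal clusters rather than treat non-integral depth as a uniform obstruction.
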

    \begin{remark}
        Given a Galois cover $\varphi\colon X\rightarrow Y$, by considering the branch locus $B\subset Y$ of $\varphi$, using \cite[Theorem 3.9]{LL99} we obtain an extension $L/F$ where $X$ is semi-stable over $L.$ This extension is constructed in two steps. First, we consider the base change of $\varphi$ to a cover over the extension $F(B)/F.$ The next step is to construct a good model (\cite[Section 1.8]{LL99}) $\mathcal{Y}$ of $Y$ over $F(B)$, with the property that points in $B_{F(B)}$ specializes to distinct smooth points in $\mathcal{Y}_s$. Now the extension $L/F(B)$ is constructed by 'killing' certain vertical ramifications in $\varphi\colon N(\mathcal{Y},K(X_{F(B)}))
        \rightarrow \mathcal{Y}.$
        However, this extension is not minimal. This can be observed by considering a semi-stable hyperelliptic curve, such that its cluster picture contains a twin cluster of half integer depth. In the case that $\varphi$ is degree 2 and $Y$ admits a smooth proper model, Theorem \ref{Theorem: Semi-stability-criterion-for 2:1-covers} determines a minimal field extension $L/F$ with the property that $X_L$ has semi-stable reduction.
    \end{remark}
    Note that Definition \ref{Definition:invariant v} gives that $v_{\textbf{s}_0}=v_\varphi$.
    Therefore, Theorem \ref{Theorem: Semi-stability-criterion-for 2:1-covers} is a generalization of the semi-stability criterion for hyperelliptic curves given in \cite[Definition 1.7]{DDMM} to the case where $Y$ is not a rational curve.\par
    Using Definition
    \ref{Definition:Reduction-map}, we view the maximal clusters in $\Sigma_{X/Y}$ as points in the special fiber of $\mathcal{Y}^{min}_s.$ These points corresponds to the points in $\overline{\{B\}}\cap \mathcal{Y}^{min}_s.$
    \begin{definition}\cite[Definition 10.1.31]{Liu-Alg-Geom}\label{Definition:Reduction-map}
        Let $Y^{\circ}$ denote the set of closed points of $Y.$ The reduction map of $Y$ with respect to a proper model $\mathcal{Y}$ is:
        \begin{align*}
            r_\mathcal{Y}\colon &Y^{\circ}\rightarrow \mathcal{Y}_s\\
            &y\mapsto \overline{\{y\}} \cap \mathcal{Y}_s,
        \end{align*}
        where $\overline{\{y\}}$ is the closure of this point in $\mathcal{Y}.$
    \end{definition}
    \begin{definition}
        Let $\mathcal{Y}$ be a proper model of $Y$. We call a point $y\in \mathcal{Y}_s$, a critical point of the covering $\varphi\colon X\rightarrow Y$, if $r_\mathcal{Y}^{-1}(y)\cap B(L)$ is non-empty. We denote the set of critical points of the cover $\varphi\colon X\rightarrow Y$ with respect to $\mathcal{Y}^{min}$ by $Crit_{X/Y}.$
    \end{definition}
    \begin{remark}
        The definition of the reduction map $r_\mathcal{Y}$ is dependent on the choice of a proper model of $Y$. Consequently, the critical points of the covering $\varphi\colon X\rightarrow Y$ is also dependent on the choice of a proper model of $Y$. However, when it is clear from the context, we talk about the critical points of a covering and leaving the choice of a model implicit.
    \end{remark}
    \begin{remark}\label{Remark:critical points- as maximal clusters}
        The critical points of the covering $\varphi\colon X\rightarrow Y$ with respect to $\mathcal{Y}^{min}$, corresponds to a maximal proper sub-clusters in $\Sigma_{X/Y}.$ Note that a cluster in $B(L)$ is maximal if it can be written as $r_{\mathcal{Y}}^{-1}(y)\cap B(L)$ for some critical point $y\in \mathcal{Y}_s$. In this sense we may view maximal clusters in $\Sigma_{X/Y}$ as points in $\mathcal{Y}^{min}.$
    \end{remark}
    \begin{definition}\label{Definition:refinable-criticalpoint}
        Let $\mathcal{Y}$ be a proper, semi-stable model of $Y$ and let $y\in \mathcal{Y}_s$ be a critical point of the covering $\varphi\colon X\rightarrow Y.$ The point $y$ is \textbf{refinable}, if there exists a birational map $\pi\colon \mathcal{Y}^{\prime}\rightarrow\mathcal{Y}$ with exceptional locus $E$ satisfying:
        \begin{enumerate}
            \item $\pi(E)=y.$
            \item The blowup             $\pi\colon \mathcal{Y}^{\prime}\rightarrow \mathcal{Y}$ decomposes as a sequence of consecutive blowups centered at closed points.
            \item Denote $\mathcal{X}^{\prime}=N(\mathcal{Y},K(X))$ and let $\rho:\mathcal{X}^{\prime}\rightarrow \mathcal{X}$ be the birational map obtained by contracting every exceptional curves in $\mathcal{X}^{\prime}$. Then, points in $\mathcal{X}\cap \rho(\varphi^{-1}(E))$ are all regular points of $\mathcal{X}$ and $\mathcal{X}_s\cap \rho(\varphi^{-1}(E))$ has at worst nodal singularities. 
            \end{enumerate}
    \end{definition}
    \begin{proposition}\label{Proposition:Semi-stable-if-and-only-if-global-and-local-condition}
        The curve $X$ is semi-stable if and only if $v_\varphi$ is even and all critical points of the covering $\varphi\colon X\rightarrow Y$ with respect to $\mathcal{Y}^{min}$ are refinable.
    \end{proposition}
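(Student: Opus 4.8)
The plan is to localize the semi-stability of $X$ at the finitely many critical points of the cover, after first disposing of a global parity condition on $v_\varphi$. Throughout I use that $X$ is semi-stable if and only if its minimal regular model has reduced special fiber with at worst nodal singularities, that any regular model of $Y$ dominating $\mathcal{Y}^{min}$ factors as a composition of blowups at closed points (factorization of birational morphisms of regular arithmetic surfaces), and that along such blowups the normalization in $K(X)$ behaves as in Lemma \ref{Lemma: Extenstion-of-the-line-bundle-to-the-blowup-map}.

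First I would establish the global statement that the special fiber of any regular model of $X$ is reduced if and only if $v_\varphi$ is even. Since $Y$ has good reduction, $\mathcal{Y}^{min}_s=C_0$ is irreducible, so every component of the special fiber of $\mathcal{X}=N(\mathcal{Y}^{min},K(X))$ lies over $C_0$ and, by Lemma \ref{Lemma:Trivialization-of-the-covering-using-vanishing-order}(1), is reduced exactly when $v_\varphi$ is even. The strict transform of $C_0$ persists in every model dominating $\mathcal{Y}^{min}$ with the same generic local ring, so $v_\varphi(C_0)$ — and hence the multiplicity of the corresponding component upstairs — is unchanged by blowups at closed points. As this component is never exceptional and so can never be contracted, $v_\varphi$ even is both necessary for semi-stability and already detected on $\mathcal{X}$ itself.

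Next I would show that, assuming $v_\varphi$ even, $\mathcal{X}$ is smooth away from the critical points, so that all remaining issues are local. Away from $Crit_{X/Y}$ the closure of the branch locus is disjoint from $\mathcal{Y}^{min}_s$, hence $div(s)=v_\varphi\, C_0$ locally and $s=\pi_F^{v_\varphi}u$ with $u$ a unit; writing $v_\varphi=2k$ and $z=\pi_F^{k}w$ turns the defining equation into $w^2=u$, which is \'etale since $2$ is invertible and $u$ is a unit. Therefore $\mathcal{X}\to\mathcal{Y}^{min}$ is \'etale there and $\mathcal{X}$ is smooth over $\mathcal{O}_F$ away from the critical points. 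Consequently the special fiber of $\mathcal{X}$ is reduced everywhere (second paragraph) and smooth outside the finitely many critical points, so the only obstruction to semi-stability is the singularity type of $\mathcal{X}$ — equivalently, of its minimal resolution — over each critical point.

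Finally I would match the local picture with refinability. For each critical point $y$, semi-stability of $X$ over $y$ means precisely that the minimal regular model is regular with at worst nodal special fiber over $y$, which is exactly the content of Definition \ref{Definition:refinable-criticalpoint}(3), read through the birational map obtained by resolving $\mathcal{X}$ over $y$ via blowups of $\mathcal{Y}^{min}$ (Lemma \ref{Lemma: Extenstion-of-the-line-bundle-to-the-blowup-map}) and then contracting exceptional curves. For the forward direction, by Remark \ref{Remark:TheoremofRaynaud} the $G$-action on a semi-stable model of $X$ descends to a semi-stable model of $Y$ which, after resolution, dominates $\mathcal{Y}^{min}$ by blowups; restricting over $y$ supplies the sequence witnessing refinability. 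For the converse, refinability produces a local resolution with nodal special fiber over $y$, and since the critical points are distinct closed points these resolutions are supported over disjoint opens and can be performed simultaneously, yielding a global regular model whose normalization, after contracting exceptional curves, is reduced, smooth away from $Crit_{X/Y}$, and nodal over each critical point — hence semi-stable. I expect the main obstacle to be this last reconciliation: controlling the contraction of exceptional curves in $\mathcal{X}'$ against the local semi-stability of $\mathcal{X}$ over $y$, where the earlier lemma — that a $(-1)$-component of $\mathcal{Y}_s$ whose preimage consists of two intersecting components cannot be exceptional — is needed to determine which curves get contracted and to ensure the contraction neither destroys regularity nor creates a non-nodal singularity.
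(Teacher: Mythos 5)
Your overall strategy coincides with the paper's: reduce semi-stability to the parity of $v_\varphi$ (detected on the component over $C_0$, which has positive genus and therefore can never be contracted) plus a local condition at each critical point, and in the converse direction perform the refining blowups simultaneously over the disjoint critical points and contract exceptional curves. Your converse is in fact argued in more detail than the paper's one-line ``by construction, $\mathcal{X}$ is semi-stable after contracting all of its exceptional divisors'': the observation that away from $Crit_{X/Y}$ the divisor of $s$ is $v_\varphi C_0$, so that after the substitution $z=\pi_F^{k}w$ the normalization is \'etale over $\mathcal{Y}^{min}$ and hence smooth there, is useful bookkeeping that the paper leaves implicit. (Your opening claim that the special fiber of \emph{any} regular model of $X$ is reduced if and only if $v_\varphi$ is even is overstated --- blowups can introduce non-reduced exceptional components even when $v_\varphi$ is even --- but you only use the correct direction, namely that the non-contractible component over $C_0$ is reduced if and only if $v_\varphi$ is even, so this is a phrasing issue rather than an error.)

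The one genuine soft spot is in your forward direction. Definition \ref{Definition:refinable-criticalpoint}(3) requires a model $\mathcal{Y}'\to\mathcal{Y}^{min}$ whose normalization $N(\mathcal{Y}',K(X))$ becomes regular with nodal special fiber over $E$ after contracting exceptional curves; since contraction creates rather than removes singularities, this effectively requires $N(\mathcal{Y}',K(X))$ to already be regular over $E$. Your route --- descend a semi-stable model of $X$ to a semi-stable model of $Y$ via Raynaud (Remark \ref{Remark:TheoremofRaynaud}) and then resolve --- produces a regular $\mathcal{Y}'$ dominating $\mathcal{Y}^{min}$, but gives no control on the singularities of $N(\mathcal{Y}',K(X))$: the normalization of a resolution of $\mathcal{X}^{min}/G$ in $K(X)$ need not be regular (it can acquire $A_n$-type singularities where odd-multiplicity vertical components meet, or where the branch divisor is tangent to the special fiber). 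The paper avoids this by invoking \cite[Theorem 7.3]{LL99}, which directly produces a regular model $\mathcal{Y}$ of $Y$ whose normalization in $K(X)$ is a regular model of $X$ and which dominates $\mathcal{Y}^{min}$ by a sequence of point blowups; restricting that model over each critical point then witnesses refinability. Substituting that citation for the Raynaud detour closes the gap, and the rest of your argument stands.
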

    \begin{proof}
        Assume that $X$ has semi-stable reduction. As $\varphi$ is a degree 2 Galois cover, by \cite[Theorem 7.3]{LL99} there exists a regular model of $\mathcal{Y}$ of $Y$, where its normalization $\mathcal{X}:=N(\mathcal{Y},K(X))$ is a regular model of $X$. Since $g(Y)>0$, there exists a birational map $\pi\colon\mathcal{Y}\rightarrow \mathcal{Y}^{min}$, which decomposes into a sequence of consecutive blowups centered at some closed points. As $X$ is semi-stable and $\mathcal{X}$ is regular, after contracting all exceptional divisors in $X$ we get a semi-stable model of $X$. In particular, all critical points of this covering are refinable. Note that as $\mathcal{Y}$ dominates $\mathcal{Y}^{min}$, we can view $C_0=(\mathcal{Y}^{min})_s$ as an irreducible component of $\mathcal{Y}_{min}$. As the irreducible components in $\varphi^{-1}(C_0)\subset \mathcal{X}_s$ are with positive genus, by \cite[Theorem 9.3.8]{Liu-Alg-Geom} we must have that these components are reduced, which by Lemma \ref{Lemma:Trivialization-of-the-covering-using-vanishing-order} we get that $v_\varphi$ is even. \par Now assume that all the critical points of the covering $\varphi:X\rightarrow Y$ are refinable and $v_\varphi$ is even. Applying a sequence of blowups satisfying the conditions given in Definition \ref{Definition:refinable-criticalpoint}, starting from each critical point, we construct a model $\mathcal{Y}$. Denote $\mathcal{X}=N(\mathcal{Y},K(X))$. By construction, we get that $\mathcal{X}$ is semi-stable after contracting all of its exceptional divisors. 
    \end{proof}
    \begin{remark}\label{Remark:Classification-of-refinable-critical-points-for-hyperelliptic-curves}
        Proposition \ref{Proposition:Semi-stable-if-and-only-if-global-and-local-condition} is not true under the assumption that $g(Y)=0$. As it is outlined in \cite{DDMM}, it is possible to construct a semi-stable model $\mathcal{Y}$ of $Y$ where $\mathcal{X}=N(\mathcal{Y},K(X))$ is a regular model of $X.$ In this case $\mathcal{Y}$ is constructed from $\mathcal{Y}^{min}$ be a sequence of consecutive blowups centered at smooth closed points on the special fiber. Similar to the proof of Proposition \ref{Proposition:Semi-stable-if-and-only-if-global-and-local-condition}, we can view $(\mathcal{Y}^{min})_s$ as an irreducible component of $\mathcal{Y}_s$ denoted by $C_0$. As $g(C_0)=0$, the irreducible components of $\mathcal{X}_s$ contained in $\varphi^{-1}(C_0)$ can be of arithmetic genus zero, and in particular, they can be exceptional in $\mathcal{X}_s$. This gives rise to cases where $v_{\varphi}$ is not necessarily even and yet $X$ is semi-stable.
    \end{remark}
    \begin{remark}
        The statement of Proposition \ref{Proposition:Semi-stable-if-and-only-if-global-and-local-condition} remains true for Galois cover of curves $\varphi\colon X\rightarrow Y$, such that $\varphi$ has simultaneous resolution of singularities \cite[Section 6]{LL99}. The cover $\varphi$ has simultaneous resolution of singularities, if given an arbitrary finite morphism of normal models $\varphi\colon \mathcal{X}\rightarrow \mathcal{Y}$, there exists a regular model of $Y$ dominating $\mathcal{Y}$, where its normalization is a regular model of $X$. In particular, for the case of degree 3 Galois cover of smooth curves, a similar statement to Proposition \ref{Proposition:Semi-stable-if-and-only-if-global-and-local-condition} remains true.
    \end{remark}
    A morphism of normal models $\varphi\colon \mathcal{X}\rightarrow \mathcal{Y}$, induces a morphism on the formal completion of stalks at any closed point $y\in \mathcal{Y}$,   
    \begin{align*}
        \hat{\varphi}_y\colon \hat{\mathcal{O}}_{\mathcal{Y},y}\rightarrow \hat{\mathcal{O}}_{\mathcal{Y},y}\otimes \mathcal{O}_{\mathcal{X}}.
    \end{align*}
    By Lemma \ref{Lemma:2:1-covers-are-spectral}, there exists a line bundle $\mathcal{L}\in Pic(\mathcal{Y})$ and a section $s\in \mathcal{L}^{\otimes2}(\mathcal{Y})$, such that 
    \begin{align*}
        \mathcal{X}=N(\mathcal{Y},K(X))=S(\mathcal{Y},\mathcal{L},s).
    \end{align*}
    Assume that $y\in \mathcal{Y}_s$ is a smooth closed point. Let $Spec(A)\subset \mathcal{Y}$ be an affine neighborhood of $y\in Y$, such that the line bundle $\mathcal{L}$ trivializes. Let $m\subset A$ be the maximal ideal corresponding to the point $y$. Then,
    \begin{align*}
        \hat{\mathcal{O}}_{\mathcal{Y},y}\otimes _{\mathcal{O}_\mathcal{Y}} \mathcal{\mathcal{O}_\mathcal{X}}&\cong A[x]/(z^2-s)\otimes_A (\lim_{\substack{\longleftarrow}}A/m^n)\\
        &\cong \lim_{\substack{\longleftarrow}}(A[z]/(z^2-s)\otimes A/m^n)\\
        &\cong (\lim_{\substack{\longleftarrow}}A/m^n)[z]/(z^2-s)\\
        &\cong \mathcal{O}_F[[w]][z]/(z^2-\hat{t}_y)\tageq\label{Equation:Local-Trivialization-isomorphism_0}
    \end{align*}
    For some $\hat{t}_y\in \mathcal{O}_F[[w]]$.
    Assuming that $\hat{t}_y\in \hat{\mathcal{O}}_{\mathcal{Y},y}$ is not a unit, using Equation \ref{Equation:Local-Trivialization-isomorphism_0}, we get that
    \begin{align*}           
        \hat{\mathcal{O}}_{\mathcal{Y},y}\otimes _{\mathcal{O}_\mathcal{Y}} \mathcal{\mathcal{O}_\mathcal{X}}\cong \mathcal{O}_F[[w,z]]/(z^2-\hat{t}_y)\tageq\label{Equation:Local-Trivialization-isomorphism}
     \end{align*}
     \begin{remark}
        In deriving equation \ref{Equation:Local-Trivialization-isomorphism}, we are using the fact that $y\in \mathcal{Y}_s$ is smooth and $k_F$ is algebraically closed. In this case, we have that $\hat{\mathcal{O}}_{Y,y}\cong \mathcal{O}_F[[z]].$ Moreover, note that the definition of local trivialization $\hat{t}_y\in \mathcal{O}_F[[w]]$ is well defined up to     multiplication by a unit in $\mathcal{O}_F[[w]].$
    \end{remark}
    \begin{lemma}\label{Lemma: Units-are-squares}
        Different choices of trivialization at a critical point $y\in \mathcal{Y}$, yields isomorphic rings in Equation \ref{Equation:Local-Trivialization-isomorphism}. In other words, given that $f(w)\in{\mathcal{O}}_{F}[[w]]$ is a unit, we get an isomorphism
        \begin{align*}
            \mathcal{O}_F[[w,z]]/(z^2-\hat{t}_y)\cong \mathcal{O}_F[[w,z]]/(z^2-\hat{t}_yf(w)).  
        \end{align*}
    \end{lemma}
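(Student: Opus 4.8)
The plan is to reduce the entire statement to the single observation that a unit of $\mathcal{O}_F[[w]]$ is automatically a square, after which the desired isomorphism is nothing more than a rescaling of the variable $z$. So I would proceed in two stages: first produce a square root $g(w)\in\mathcal{O}_F[[w]]$ with $g(w)^2=f(w)$, and then write down the change of variables it induces and verify it is an isomorphism.

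For the square root I would invoke Hensel's lemma. The ring $\mathcal{O}_F[[w]]$ is a complete Noetherian local ring, with maximal ideal $\mathfrak{m}=(\pi_F,w)$ and residue field $k_F$; being complete, it is Henselian. Since $f(w)$ is a unit, its image $\bar{f}\in k_F$ is nonzero, and because $k_F$ is algebraically closed (Assumption \ref{Assumption:Main assumption}) there is $\bar{g}\in k_F^{\times}$ with $\bar{g}^2=\bar{f}$. Applying Hensel's lemma to $h(T)=T^2-f(w)\in\mathcal{O}_F[[w]][T]$, for which $h(\bar{g})\equiv 0$ and $h'(\bar{g})=2\bar{g}\neq 0$ in $k_F$ (the latter precisely because $\mathrm{char}(k_F)\neq 2$ and $\bar{g}\neq 0$), the simple root $\bar{g}$ lifts to a genuine root $g(w)\in\mathcal{O}_F[[w]]$, i.e. $g(w)^2=f(w)$. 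As $f(w)$ is a unit, $g(w)$ is a unit as well.

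With $g(w)$ in hand I would define
\[
    \Phi\colon \mathcal{O}_F[[w,z]]/(z^2-\hat{t}_y)\longrightarrow \mathcal{O}_F[[w,z]]/(z^2-\hat{t}_y f(w))
\]
by $w\mapsto w$ and $z\mapsto g(w)^{-1}z$. This descends to the quotients because the image of $z^2-\hat{t}_y$ is $g(w)^{-2}z^2-\hat{t}_y=f(w)^{-1}(z^2-\hat{t}_y f(w))$, which vanishes in the target. Its inverse is $w\mapsto w$, $z\mapsto g(w)z$; the analogous computation $g(w)^2 z^2-\hat{t}_y f(w)=f(w)(z^2-\hat{t}_y)$ shows it too is well defined, and the two maps are mutually inverse since $g(w)g(w)^{-1}=1$. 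This gives the claimed isomorphism.

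The only genuine point requiring care is the production of $g(w)$, and specifically the assertion that $\mathcal{O}_F[[w]]$ is Henselian; once this is granted, the remainder is a formal change of variable. I would therefore devote the bulk of the write-up to recording the completeness (hence Henselianness) of $\mathcal{O}_F[[w]]$, and to emphasizing that the hypotheses $\mathrm{char}(k_F)\neq 2$ and $k_F$ algebraically closed are exactly what make $T^2-\bar{f}$ factor with a simple root, so that Hensel applies.
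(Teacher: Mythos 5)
Your proof is correct and follows essentially the same route as the paper's: both reduce the lemma to the fact that units of $\mathcal{O}_F[[w]]$ are squares and then conclude with the change of variables $z\mapsto z/\sqrt{f(w)}$. The only difference is organizational --- the paper factors the unit as a unit of $\mathcal{O}_F$ times an element of $1+w\,\mathcal{O}_F[[w]]$ and handles the two factors separately (Hensel for the first, invertibility of $2$ for the second), whereas you apply Hensel's lemma once to the Henselian local ring $\mathcal{O}_F[[w]]$; both are valid.
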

    \begin{proof}
        The units of $\mathcal{O}_F[[w]]$ are squares in this ring. To prove this,  We consider two cases. Assume that $u\in \mathcal{O}_F^{\times}$. By applying Hensel's Lemma and considering the fact $k_F$ is an algebraically closed field with characterisitic not equal to 2, we get that $u$ is a square in $\mathcal{O}_F.$ Now let the unit $u\in \mathcal{O}_F[[w]]^*$ be of the form $u=1+wg$, for some $g\in \mathcal{O}_F[[w]]\setminus \{0\}.$ Since 2 is invertible in $\mathcal{O}_F[[w]]$, we get that $u$ is a square in $\mathcal{O}_F[[w]]$. Any element in $\mathcal{O}_F[[w]]^\times$ can be presented as multiplication of units considered in these two cases. Therefore all units of $\mathcal{O}_F[[w]]$ are squares in this ring. The isomorphism in this lemma is given by the change of variables $w=w$ and $z={z}/{\sqrt{f(w)}}.$
    \end{proof}
    \begin{remark}\label{Remark: Polynomial type of the local trivialization}
        Let $\mathcal{Y}$ be an arbitrary proper regular model of $Y$. Let $y\in \mathcal{Y}_s$ be a non-singular critical point of the covering $\varphi\colon X\rightarrow Y$. Set $r=\vert r_{\mathcal{Y}}^{-1}(y)\cap B(L)\vert$. By a change of variable, if need be, we can assume that
        \begin{align*}
            \hat{t}_y= w^r - \pi_F g(w),
        \end{align*}
        where $g(w)\in \mathcal{O}_F[[w]]$ is a polynomial of degree at most $r-1.$
    \end{remark}
    In the remaining part of this section we are going to show that the property of being a refinable critical point is local in the sense of Lemma \ref{Lemma:Locally-analytic-structure-of-critical-points}.
    \begin{lemma}\label{Lemma:Locally-analytic-structure-of-critical-points}
        For $i=1,2$, let $\varphi_i\colon X_i\rightarrow Y_i$ be a degree two Galois cover of smooth curves and let $\mathcal{Y}_i$ be a regular model of $Y_i$. Define $\mathcal{X}_i=N(\mathcal{Y}_i,K(X_i))$. Let $y_i\in \mathcal({Y}_i)_s$ be a smooth critical point of the covering $\varphi_i\colon X_i\rightarrow Y_i$ with respect to the model $\mathcal{Y}_i$. Assume that there exists a vertical isomorphism making the following diagram commute:
        \begin{equation}\label{Diagram:Same-trivialization-square}
            \begin{tikzcd}
                {\hat{\mathcal{O}}_{\mathcal{Y}_1,y_1}} \arrow[r, "(\hat{\varphi}_1)_{y_1}"] \arrow[d, "\cong"] & {\hat{\mathcal{O}}_{\mathcal{Y}_1,y_1}\otimes_{\mathcal{O}_{\mathcal{Y}_1,y_1}}\mathcal{O}_{\mathcal{X}_1}} \arrow[d, "\cong"] \\
                    {\hat{\mathcal{O}}_{\mathcal{Y}_2,y_2}} \arrow[r, "(\hat{\varphi}_2)_{y_2}"]                    & {\hat{\mathcal{O}}_{\mathcal{Y}_2,y_2}\otimes_{\mathcal{O}_{\mathcal{Y}_2,y_2}}\mathcal{O}_{\mathcal{X}_2}}.   
            \end{tikzcd}
        \end{equation}
        Then,
        \begin{enumerate}
            \item $y_1$ is a refinable critical point for $\varphi_1$ if and only if $y_2$ is a refinable critical point for $\varphi_2.$
            \item Given that $y_1$ is refinable and $\pi_1\colon \mathcal{Y}^{\prime}_1\rightarrow \mathcal{Y}_1$ is a birational map of regular models with exceptional locus $E_1$ satisfying the conditions of Definition \ref{Definition:refinable-criticalpoint}, there exists a birational map of regular models $\pi_2\colon \mathcal{Y}^{\prime}_2\rightarrow \mathcal{Y}_2$ with exceptional locus $E_2$ satisfying the conditions of Definition \ref{Definition:refinable-criticalpoint} such that $\varphi^{-1}_1(E_1)\cap N(\mathcal{Y}^{\prime}_1,K(X_1))_s$ is isomorphic to $\varphi^{-1}_2(E_2)\cap N(\mathcal{Y}^{\prime}_2,K(X_2))_s$.
        \end{enumerate}
        In this case we say that $\varphi_1$ has the same trivialization at $y_1\in \mathcal{Y}_1$ as $\varphi_2$ at $y_2\in\mathcal{Y}_2$.
    \end{lemma}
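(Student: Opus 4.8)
The plan is to show that every ingredient in Definition \ref{Definition:refinable-criticalpoint} --- the tower of blowups over the critical point, the normalization of each blown-up model, the contraction of exceptional curves, and the verifications of regularity and of nodality --- is governed entirely by the formal-local data recorded in the two rows of diagram \ref{Diagram:Same-trivialization-square}. Since the vertical isomorphisms identify this data for $\varphi_1$ at $y_1$ with that for $\varphi_2$ at $y_2$, the two refinement problems become the same problem, which yields both assertions simultaneously. Concretely, because each $y_i$ is a smooth point and $k_F$ is algebraically closed, Equation \ref{Equation:Local-Trivialization-isomorphism} gives $\hat{\mathcal{O}}_{\mathcal{Y}_i,y_i}\otimes_{\mathcal{O}_{\mathcal{Y}_i}}\mathcal{O}_{\mathcal{X}_i}\cong \mathcal{O}_F[[w,z]]/(z^2-\hat{t}_{y_i})$, and the commutativity of \ref{Diagram:Same-trivialization-square} together with Lemma \ref{Lemma: Units-are-squares} shows that the left vertical isomorphism $\hat{\mathcal{O}}_{\mathcal{Y}_1,y_1}\cong\hat{\mathcal{O}}_{\mathcal{Y}_2,y_2}\cong\mathcal{O}_F[[w]]$ carries $\hat{t}_{y_1}$ to $\hat{t}_{y_2}$ up to a unit, which is a square. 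Thus after fixing these identifications I may assume $\hat{t}_{y_1}=\hat{t}_{y_2}$.

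First I would record that a sequence of blowups centered at closed points lying over $y$, together with its exceptional locus $E$ and any chosen formal neighborhood of $E$, is determined by $\hat{\mathcal{O}}_{\mathcal{Y},y}$. This follows by induction on the length of the tower: the blowup of a regular arithmetic surface at a closed point commutes with completion along that point, so the first exceptional divisor and its formal neighborhood depend only on $\hat{\mathcal{O}}_{\mathcal{Y},y}$, and each subsequent blowup is centered at a closed point of the exceptional locus whose completed local ring is again part of this formal data. Transporting the blowup sequence $\pi_1\colon\mathcal{Y}_1'\to\mathcal{Y}_1$ through the isomorphism $\hat{\mathcal{O}}_{\mathcal{Y}_1,y_1}\cong\hat{\mathcal{O}}_{\mathcal{Y}_2,y_2}$ then produces a blowup sequence $\pi_2\colon\mathcal{Y}_2'\to\mathcal{Y}_2$ with exceptional locus $E_2$ and an isomorphism of formal neighborhoods of $E_1$ and $E_2$ over $y_1$, $y_2$.

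Next I would pass to the covers. By Lemma \ref{Lemma: Extenstion-of-the-line-bundle-to-the-blowup-map}, writing $\mathcal{X}_i=S(\mathcal{Y}_i,\mathcal{L}_i,s_i)$ as in Lemma \ref{Lemma:2:1-covers-are-spectral}, one has $N(\mathcal{Y}_i',K(X_i))=S(\mathcal{Y}_i',\pi_i^*\mathcal{L}_i,\pi_i^*s_i)$, so the cover is cut out locally by $z^2-\pi_i^*s_i$; on the formal neighborhood of $E_i$ this section is the pullback of $\hat{t}_{y_i}$. Since the formal neighborhoods of $E_1$ and $E_2$ are identified and $\hat{t}_{y_1}=\hat{t}_{y_2}$, the normalizations $N(\mathcal{Y}_i',K(X_i))$ are formally isomorphic along $\varphi_i^{-1}(E_i)$, compatibly with the covering maps. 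The remaining data of Definition \ref{Definition:refinable-criticalpoint}(3) --- the contraction $\rho\colon\mathcal{X}_i'\to\mathcal{X}_i$ of the exceptional curves, the regularity of the points of $\mathcal{X}_i\cap\rho(\varphi_i^{-1}(E_i))$, and the at-worst-nodality of $(\mathcal{X}_i)_s\cap\rho(\varphi_i^{-1}(E_i))$ --- are all properties of completed local rings, hence invariant under this formal isomorphism. Therefore condition (3) holds for $\pi_1$ exactly when it holds for the transported $\pi_2$, giving both the equivalence in part (1) and the isomorphism $\varphi_1^{-1}(E_1)\cap N(\mathcal{Y}_1',K(X_1))_s\cong\varphi_2^{-1}(E_2)\cap N(\mathcal{Y}_2',K(X_2))_s$ of part (2).

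The hard part will be making rigorous the slogan that blowups, normalization, and contraction, once restricted to a formal neighborhood of the exceptional locus, are controlled solely by $\hat{\mathcal{O}}_{\mathcal{Y},y}$. The delicate point is that blowing up and especially contracting are a priori global operations; what is actually needed is that the contraction of a configuration of exceptional curves produces a point whose completed local ring is read off from a formal neighborhood of that configuration, and that normalization commutes with completion, which holds since $\mathcal{O}_F$ is excellent. Granting these compatibilities, regularity and the property of being nodal are detected on completed local rings --- via the descriptions $\mathcal{O}_F[[u,v]]/(uv-\pi_F^r)$ of a node and $\mathcal{O}_F[[u]]$ of a smooth point --- so the verification of Definition \ref{Definition:refinable-criticalpoint}(3) transfers unchanged between the two covers.
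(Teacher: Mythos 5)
Your proposal is correct and follows essentially the same route as the paper's proof: extract from the diagram an isomorphism of completed local rings carrying $\hat{t}_{y_1}$ to $\hat{t}_{y_2}$ (up to a unit, handled by Lemma \ref{Lemma: Units-are-squares}), use that blowing up at a closed point is formal-local to propagate this identification to the closed points of the exceptional loci, and conclude via Lemma \ref{Lemma: Extenstion-of-the-line-bundle-to-the-blowup-map} that the normalizations agree along $\varphi_i^{-1}(E_i)$. You are somewhat more explicit than the paper about iterating through the full tower of blowups and about why the contraction and the regularity/nodality checks in Definition \ref{Definition:refinable-criticalpoint}(3) are detected on completed local rings, but the underlying argument is the same.
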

    \begin{proof}
        The commutative Diagram \ref{Diagram:Same-trivialization-square} implies the existence of an isomorphism
        \begin{align*}
            \rho\colon\hat{\mathcal{O}}_{\mathcal{Y}_1,y_1}\rightarrow \hat{\mathcal{O}}_{\mathcal{Y}_2,y_2},
        \end{align*}
        mapping $\hat{t}_{y_1}$ to $\hat{t}_{y_2}.$
        Let $\pi_i\colon\mathcal{Y}_i^{\prime}\rightarrow \mathcal{Y}_i$ be the blowup centered at $y_i$ with exceptional locus $E_i$ and let $\mathcal{X}^{\prime}_i=N(\mathcal{Y}^{\prime}_i,K(X_i))$ for $i=1,2$. As blowing up at a closed point is local, the morphism $\rho\colon \hat{\mathcal{O}}_{\mathcal{Y}_1,y_1}\rightarrow \hat{\mathcal{O}}_{\mathcal{Y}_2,y_2}$ induces a bijection between closed points in $E_1\cap \mathcal{Y}^{\prime}_1$ and $E_2\cap \mathcal{Y}^{\prime}_2$, also denoted by $\rho$, with the property that for any closed point $y^{\prime}_1\in E_1\cap \mathcal{Y}^{\prime}_1$ we have \[\hat{\mathcal{O}}_{\mathcal{Y}^{\prime}_1,y^{\prime}_1}\otimes_{\mathcal{O}_{\mathcal{Y}^{\prime}_1,y^{\prime}_1}}\mathcal{O}_{\mathcal{X}^{\prime}_1}\cong \hat{\mathcal{O}}_{\mathcal{Y}^{\prime}_2,\rho(y^{\prime}_1)}\otimes_{\mathcal{O}_{\mathcal{Y}^{\prime}_2,\rho(y^{\prime}_1)}}\mathcal{O}_{\mathcal{X}^{\prime}_2}.\]
        Denote the irreducible component $E_i\cap \mathcal{Y}^{\prime}_i$ by $C_i$ for $i=1,2$. Then, we note that $v_{\varphi_1}(C_1)$ is equal to $v_{\varphi_2}(C_2)$ (see Definition \ref{Definition:Order-of-vanishing}). In particular, we see that $\varphi_1^{-1}(E_1)\cap(\mathcal{X}^{\prime}_1)_s$ is isomorphic with $\varphi_1^{-1}(E_2)\cap(\mathcal{X}^{\prime}_2)_s$.
    \end{proof}
    \begin{remark}
        In the setting of Lemma \ref{Lemma:Locally-analytic-structure-of-critical-points}, assume that $\hat{t}_{y_i}\in \hat{\mathcal{O}}_F[[w_i]]$ is the trivialization of the covering $\varphi_i$ at $y_i$ for $i=1,2.$ Using the isomorphism given in \ref{Equation:Local-Trivialization-isomorphism}, we can rewrite the commutative diagram \ref{Diagram:Same-trivialization-square} as follow
        \begin{equation*}
            \begin{tikzcd}
                {\mathcal{O}_F[[z]]} \arrow[r, hook] \arrow[rd, hook] & {\mathcal{O}_F[[z,w_1]]/(z^2-\hat{t}_{y_1})} \arrow[d, "\cong"] \\                      &{\mathcal{O}_F[[z,w_2]]/(z^2-\hat{t}_{y_2}}).
            \end{tikzcd}
        \end{equation*}
        Note the data of the commutative diagram is equivalent to the existence of an isomorphism $\hat{\mathcal{O}}_F[[w_1]]\rightarrow \hat{\mathcal{O}}_F[[w_2]]$ mapping $\hat{t}_{y_1}$ to $\hat{t}_{y_2}.$
    \end{remark}
    Using Lemma \ref{Lemma:Critical-point-can-be-viewed-as-points-in-hyperelliptic-curve}, we view the critical points of the cover $\varphi$, as critical points on a hyperelliptic curve. This allows us to relate the arithmetic properties of $X$, to the arithmetic properties of hyperelliptic curves.
    \begin{lemma}\label{Lemma:Critical-point-can-be-viewed-as-points-in-hyperelliptic-curve}
        Let $y\in \mathcal{Y}^{min}_s$ be a critical point of the covering $\varphi\colon X\rightarrow Y$. Then there exists a hyperelliptic curve $\rho_y\colon C\rightarrow \mathbb{P}^1_F$ together with a critical point $y^{\prime}$, such that $\varphi$ has the same trivialization at $y$ as $\rho$ at $y^{'}.$ In particular, $y$ is a refinable critical point for $\varphi$ if and only if $y^{\prime}$ is a refinable critical point for $\rho.$
    \end{lemma}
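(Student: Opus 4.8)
The plan is to make the local picture of $\varphi$ at $y$ coincide literally with that of an explicitly chosen hyperelliptic Weierstrass cover, and then transfer refinability using the locality result of Lemma \ref{Lemma:Locally-analytic-structure-of-critical-points}. Since $\mathcal{Y}^{min}$ is smooth, $y$ is a smooth closed point of $\mathcal{Y}^{min}_s$, so Equation \ref{Equation:Local-Trivialization-isomorphism} yields a trivialization $\hat{t}_y\in\mathcal{O}_F[[w]]$ with
\[
\hat{\mathcal{O}}_{\mathcal{Y}^{min},y}\otimes_{\mathcal{O}_{\mathcal{Y}^{min}}}\mathcal{O}_{\mathcal{X}}\cong\mathcal{O}_F[[w,z]]/(z^2-\hat{t}_y).
\]
First I would apply Remark \ref{Remark: Polynomial type of the local trivialization}: writing $r=\vert r_{\mathcal{Y}^{min}}^{-1}(y)\cap B(L)\vert$, after a change of variable we may assume $\hat{t}_y=w^r-\pi_F\,g(w)$ with $g\in\mathcal{O}_F[[w]]$ a polynomial of degree at most $r-1$. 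The point is that $f(w):=w^r-\pi_F g(w)$ is already an honest polynomial in $\mathcal{O}_F[w]$ of degree $r$, which I will read as a Weierstrass equation.

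Next I would construct the comparison curve directly from $f$. Define $C/F$ by $z^2=f(w)$ and let $\rho\colon C\to\mathbb{P}^1_F$ be the degree two cover given by the $w$-coordinate, with smooth proper model $\mathbb{P}^1_{\mathcal{O}_F}$ and $y'$ the closed point $w=0$ of its special fiber. Two things must be checked. Since the $r$ points of $r_{\mathcal{Y}^{min}}^{-1}(y)\cap B(L)$ are distinct branch points, $\hat{t}_y$ has $r$ distinct roots, so $f$ is separable and $C$ is a genuine smooth degree two cover of $\mathbb{P}^1_F$. Moreover the Newton polygon of $f$ has only positive slopes, its leading coefficient being a unit while all lower coefficients are divisible by $\pi_F$; hence every root of $f$ has positive valuation and reduces to $w=0$, so $y'$ is a critical point of $\rho$ whose fibre under the reduction map again meets the branch locus in exactly $r$ points.

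Finally I would match trivializations and conclude. As $\hat{\mathcal{O}}_{\mathbb{P}^1_{\mathcal{O}_F},y'}\cong\mathcal{O}_F[[w]]$ and the normalization of $\mathbb{P}^1_{\mathcal{O}_F}$ in $K(C)=F(w)(\sqrt{f})$ is locally $\mathcal{O}_F[[w,z]]/(z^2-f(w))$, the trivialization of $\rho$ at $y'$ is exactly $\hat{t}_{y'}=f(w)=\hat{t}_y$. Thus the identity map on $\mathcal{O}_F[[w]]$ carries $\hat{t}_y$ to $\hat{t}_{y'}$, which is precisely the data making Diagram \ref{Diagram:Same-trivialization-square} commute; in the terminology of Lemma \ref{Lemma:Locally-analytic-structure-of-critical-points}, $\varphi$ has the same trivialization at $y$ as $\rho$ at $y'$. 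Part (1) of that lemma then gives the final equivalence: $y$ is refinable for $\varphi$ if and only if $y'$ is refinable for $\rho$. I expect the main obstacle to be the two verification steps of the previous paragraph, namely deducing separability of $f$ from distinctness of the branch points and confirming via the Newton polygon that $r$ is preserved under reduction; the ambiguity of $\hat{t}_y$ up to a unit is harmless here by Lemma \ref{Lemma: Units-are-squares}, so no loss occurs in passing from the trivialization to the literal polynomial $f$.
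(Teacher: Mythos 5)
Your overall strategy coincides with the paper's: read the local trivialization $\hat{t}_y\in\mathcal{O}_F[[w]]$ as the Weierstrass polynomial of an explicit degree two cover of $\mathbb{P}^1_F$, check that $w=0$ on $\mathbb{P}^1_{\mathcal{O}_F}$ is a critical point with the same completed local picture, and transfer refinability via Lemma \ref{Lemma:Locally-analytic-structure-of-critical-points}. Your two verification steps (separability of $f$ from distinctness of the branch points, and the Newton polygon argument that all roots of $f$ reduce to $w=0$) are correct and in fact spelled out in more detail than in the paper, and you correctly identify the locality lemma as the engine of the final equivalence.

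There is, however, one genuine gap. You take $C\colon z^2=f(w)$ with $f=\hat{t}_y$ of degree $r=\vert r_{\mathcal{Y}^{min}}^{-1}(y)\cap B(L)\vert$, and nothing forces $r$ to be large: for $r\le 2$ the curve $C$ is rational, and for $r\le 4$ its genus is at most one, so $C$ need not be a hyperelliptic curve. The statement asserts the existence of a hyperelliptic curve, and this is not cosmetic --- the whole point of the lemma is to feed these comparison curves into the semi-stability and cluster-picture machinery of \cite{DDMM}, which is stated for hyperelliptic curves. The paper repairs exactly this by setting $C\colon z^2=f(w)\hat{t}_y$ with $f$ an auxiliary monic cubic having no root in common with $\hat{t}_y$ modulo $m_F$. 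This raises the genus while changing nothing locally at $y'$: since $\hat{t}_y\equiv w^r$ modulo $m_F$, the condition on $f$ forces $f(0)\in\mathcal{O}_F^{\times}$, so $f$ is a unit in $\mathcal{O}_F[[w]]$, hence a square by Lemma \ref{Lemma: Units-are-squares}, and the trivialization of the new cover at $y'$ is still $\hat{t}_y$ up to the allowed unit ambiguity. With that single modification inserted before your final paragraph, your matching of trivializations via the identity on $\mathcal{O}_F[[w]]$ and the appeal to part (1) of Lemma \ref{Lemma:Locally-analytic-structure-of-critical-points} go through verbatim and reproduce the paper's proof.
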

    \begin{proof}
        Let $\hat{t}_\textbf{y}\in\hat{\mathcal{O}}_{\mathcal{Y}^{min},y}\cong \mathcal{O}_F[[w]]$ be the trivialization of the covering $\varphi$ at $y\in \mathcal{Y}^{min}$. Let $f(w)$ be a monic polynomial of degree 3, with no roots in common with $\hat{t}_y$ modulo $m_F.$ Define $\rho\colon C\rightarrow \mathbb{P}_F^1$ be the hyperelliptic curve given by the equation \begin{align*}
        C: \quad y^2=f(w)\hat{t}_{\textbf{s}}.
        \end{align*}The extra factor in the definition of $C$, guarantees that it is a hyperelliptic curve. The covering $\rho\colon C\rightarrow \mathbb{P}^1_F$ has a critical point $y^{\prime}$ with respect to $\mathbb{P}^1_{\mathcal{O}_F}$ such that
        \[\hat{\mathcal{O}}_{\mathbb{P}^1_{\mathcal{O}_F},\textbf{s}^\prime}
        \otimes_{\mathcal{O}_{\mathbb{P}^1_{\mathcal{O}_F}}}\mathcal{C}\cong \mathcal{O}_F[[z,w]]/(z^2-\hat{t}_\textbf{s}),\]
        where $\mathcal{C}=N(\mathbb{P}_{\mathcal{O}_F}^1,K(C)).$ Now we can apply Lemma \ref{Lemma:Critical-point-can-be-viewed-as-points-in-hyperelliptic-curve} to the critical points $y\in \mathcal{Y}^{min}_s$ and $y^{\prime}\in \mathbb{P}^{1}_{\mathcal{O}_F}.$ 
    \end{proof}
    \begin{definition}\label{Definition: hyperelliptic associated to maximal subclusters}
        Let $\textbf{s}\in \Sigma_{X/Y}$ be a maximal cluster corresponding to a critical point $y$ in $\mathcal{Y}^{min}_s$. We denote the corresponding hyperelliptic curve given in Lemma \ref{Lemma:Critical-point-can-be-viewed-as-points-in-hyperelliptic-curve} by $C_\textbf{s}$ or $C_{y}$. 
    \end{definition}
    \subsection{Proof of theorem \ref{Theorem: Semi-stability-criterion-for 2:1-covers}}
    By Proposition \ref{Proposition:Semi-stable-if-and-only-if-global-and-local-condition}, $X$ is semi-stable if and only if $v_\varphi$ is even and all of the critical points of the covering $\varphi\colon X\rightarrow Y$ with respect to the model $\mathcal{Y}^{min}$ are refinable. By Lemma \ref{Lemma:Critical-point-can-be-viewed-as-points-in-hyperelliptic-curve}, we can view critical points of the covering $\varphi\colon X\rightarrow Y$ as critical points arising for some hyperelliptic curves. Now the conditions (2)-(4) given in Theorem \ref{Theorem: Semi-stability-criterion-for 2:1-covers} follows from the semi-stability criterion for hyperelliptic curve as given in \cite[Definition 1.7]{DDMM}.
    \subsection{A description of $\mathcal{X}^{min}_s$  assuming that $X$ has semi-stable reduction}
     Throughout this subsection we assume that $\varphi\colon X\rightarrow Y$ satisfies the conditions given in Assumption \ref{Assumption:Main assumption}. In other words, we assume that $X$ has semi-stable reduction over $F$.\par
     Following the construction given in \cite{DDMM}, we construct a semi-stable model  $\mathcal{Y}^{disc}/\mathcal{O}_F$ of $Y$, where its normalization in $K(X)$ is a regular model of $X.$ Using the results given in Subsection \ref{Subsection: Model of curves}, we study the irreducible components of $\mathcal{X}_s$, after identifying all of the exceptional divisors and contracting these components, we get a semi-stable model of $X$. To describe $\mathcal{X}^{min}_s$ we adopt and use the notion of admissible collection of discs as given \cite{DDMM}.
    \begin{definition}
        Let $A$ be a finite set. An abstract disc on $A$ is the data of a a subset $D\subset A$ together with a rational invariant $d_D$ assigned to it, called the \textbf{depth} of $D.$ A disc is \textbf{integral}, if it has integer depth. We say that $D$ is a subdisc of $D^{\prime}$, if $D\subset D^{\prime}$ and $d_D>d_{D^{\prime}}$. We denote this relation by $D< D^{\prime}$. An \textbf{admissible collection of discs on $A$} is a finite collection of integral discs $\mathcal{D}=\{D_i\}_{i\in I}$ on $A$ satisfying
        \begin{enumerate}
            \item $D_i=A$ for some $i\in A$.
            \item for any $i\neq j$ if $D_i\cap D_j\neq \emptyset$ then either $D_i\leq D_j$ or $D_j\leq D_i.$ In other words, the collection of discs in $\mathcal{D}$, gives a nested collection of subsets of $A$.
            \item The collection of discs in $\mathcal{D}$ is \textbf{complete}. That is to say given that $D_1 < D_2$ in $\mathcal{D}$, where $d_{D_1}>d_{D_2}$, there exists a disc $D\in \mathcal{D}$ such that $D_1<D<D_2.$ 
        \end{enumerate}
    \end{definition}
    \begin{definition}\label{Definition: Isomorphism of admissible collection}
        For $i\in\{1,2\}$, let $\mathcal{D}_i$ be a collection of admissible discs on the underlying set $A_i$. An isomorphism of admissible discs between $\mathcal{D}_1$ and $\mathcal{D}_2$, is a bijection $f\colon A_1\rightarrow A_2$ satisfying the following conditions.
        \begin{enumerate}
        \item Given $D_1\in \mathcal{D}_1$, then $f(D_1)\in \mathcal{D}_2$ and $d_{D}=d_{f(D)}.$
        \item $f$ induces a bijection from discs in $\mathcal{D}_1$ to discs on $\mathcal{D}_2.$
    \end{enumerate}
    We denote the isomorphism of discs by $f\colon \mathcal{D}_1\xrightarrow{\cong} \mathcal{D}_2.$ The group of automorphisms of collection of admissible discs on $D_1$ is denoted by $Aut(\mathcal{D}_1)$. We say that a group $G$ acts on $\mathcal{D}_1$ if there exists a homomorphism $G\rightarrow Aut(\mathcal{D}_1).$
    \end{definition}
    Since $X$ is semi-stable, all clusters on $\Sigma_{X/Y}$ are with integral depth, possibly with the exception of some twin clusters. This fact follows from \cite[Proposition C.7]{DDMM}. In particular, a cluster with non-integer depth has no proper sub cluster.
    \begin{definition}
        The collection of admissible disc $\mathcal{D}_{X/Y}$ on $B(L)$ is defined as follow.
        \begin{enumerate}
            \item To each cluster $\textbf{s}\in \Sigma_{X/Y}$ with integral depth, we assign a disc denoted by $D(\textbf{s})$, equal to $\textbf{s}$ as a set and with depth $d_\textbf{s}.$ The disc $D(\textbf{s})$ is called a \textbf{ defining disc} in $\mathcal{D}_{X/Y}$.
            \item Given a cluster $\textbf{s}$ with non-integer depth $d_\textbf{s}>1$, define $D(\textbf{s})$ to be the disc with depth $\lfloor d_\textbf{s}\rfloor$ and equal to $\textbf{s}$ as a set.
            \item Given a cluster $\textbf{s}\neq \textbf{s}_0,$ 
            the collection $\mathcal{D}_{X/Y}$ contains all integral discs $D \subset B(L)$ satisfying $D(P(\textbf{s})<D<D(\textbf{s})$, such that $d_{P(\textbf{s})}<d_D<d_{\textbf{s}}$
        \end{enumerate}
        We set $D_0$ to be the disc of depth zero containing $B(L).$ Given a disc $D\subset \mathcal{D}_{X/Y}$ where $D\neq D_0$, set $P(D)$ to be the  be the unique disc in $\mathcal{D}_{X/Y}$ such that \[D<P(D) \mbox{ and } d_{p(D)}=d_{D}-1.\] When it is clear from the context, we drop the subscript and denote the collection of admissible disc associated to the cover $\varphi\colon X\rightarrow Y$ by $\mathcal{D}.$
    \end{definition}
     Note that $\mathcal{D}_{X/Y}$ is an admissible collection of integral discs on $B(L).$
    \begin{definition}\label{Definition: Subcollection associated to a disc}
        Let $y\in \mathcal{Y}^{min}_s$ be a critical point of the covering $\varphi\colon X\rightarrow Y.$ Then, this critical point corresponds to a maximal disc $D\in \mathcal{D}_{X/Y}.$ Set $\mathcal{D}_y\subset \mathcal{X}_{X/Y}$ to be the collection of discs consists of $D$ and all discs in $\mathcal{D}_{X/Y}$ that are contained in $D$.
    \end{definition}
    \begin{remark}
        Let $y\in \mathcal{Y}^{min}$ be a critical point of the covering $\varphi\colon X\rightarrow Y$. Let the pair $\rho_y\colon C_y\rightarrow \mathbb{P}^1_F=:Z$ and $z\in \mathcal{Z}^{min}$ be as they are given in Lemma \ref{Lemma:Critical-point-can-be-viewed-as-points-in-hyperelliptic-curve}. Then, there exists an isomorphism of collection of admissible discs $\mathcal{D}_z\cong \mathcal{D}_y.$ 
    \end{remark}
    \begin{definition}
        A disc $D\in \mathcal{D}_{X/Y}$ is called even (resp. odd), if $\vert D\cap B(L)\vert $ is even (resp. odd). A disc $D$ is called \"ubereven, if it is even and all immediate sub-disc of $D$ are also even.
    \end{definition}
    Using the admissible collection of discs $\mathcal{D}_{X/Y}$, we construct a minimal regular model of $Y$, denoted by $\mathcal{Y}^{disc}$, with the property that its normalization in $K(X)$ is a regular model of $X$. 
    \begin{definition}
        Define $T_{X/Y}$ to be the rooted tree with vertices $v_D$ corresponding to discs $D\in \mathcal{D}_{X/Y}$, with edges ${D_i}{D_j}$ if there is a parent/child relation between $D_i$ and $D_j$. We set the roots of this tree to be the vertex corresponding to the disc $D_0$.
    \end{definition}
    \begin{example}
    Let $\varphi:X\rightarrow Y$ be the hyperelliptic curve given in Example \ref{Example: Difference between the notion of cluster pictures}. Then $T_{X/Y}$ is given by the following graph.
        \begin{center}
            \begin{tikzpicture}[x=0.75pt,y=0.75pt,yscale=-1,xscale=1]

\draw  [fill={rgb, 255:red, 0; green, 0; blue, 0 }  ,fill opacity=1 ] (279.06,27.08) .. controls (279.06,25.38) and (280.09,24) .. (281.36,24) .. controls (282.63,24) and (283.66,25.38) .. (283.66,27.08) .. controls (283.66,28.79) and (282.63,30.17) .. (281.36,30.17) .. controls (280.09,30.17) and (279.06,28.79) .. (279.06,27.08) -- cycle ;
\draw  [fill={rgb, 255:red, 0; green, 0; blue, 0 }  ,fill opacity=1 ] (312.33,60.86) .. controls (312.33,59.16) and (313.36,57.78) .. (314.63,57.78) .. controls (315.9,57.78) and (316.93,59.16) .. (316.93,60.86) .. controls (316.93,62.56) and (315.9,63.95) .. (314.63,63.95) .. controls (313.36,63.95) and (312.33,62.56) .. (312.33,60.86) -- cycle ;
\draw  [fill={rgb, 255:red, 0; green, 0; blue, 0 }  ,fill opacity=1 ] (279.06,60.22) .. controls (279.06,58.52) and (280.09,57.14) .. (281.36,57.14) .. controls (282.63,57.14) and (283.66,58.52) .. (283.66,60.22) .. controls (283.66,61.93) and (282.63,63.31) .. (281.36,63.31) .. controls (280.09,63.31) and (279.06,61.93) .. (279.06,60.22) -- cycle ;
\draw  [fill={rgb, 255:red, 0; green, 0; blue, 0 }  ,fill opacity=1 ] (245.32,60.86) .. controls (245.32,59.16) and (246.35,57.78) .. (247.62,57.78) .. controls (248.89,57.78) and (249.92,59.16) .. (249.92,60.86) .. controls (249.92,62.56) and (248.89,63.95) .. (247.62,63.95) .. controls (246.35,63.95) and (245.32,62.56) .. (245.32,60.86) -- cycle ;
\draw    (281.36,27.08) -- (314.63,60.86) ;
\draw    (281.36,27.08) -- (247.62,60.86) ;
\draw    (281.36,27.08) -- (281.36,60.22) ;
\draw    (281.36,60.22) -- (281.29,90.08) ;
\draw  [fill={rgb, 255:red, 0; green, 0; blue, 0 }  ,fill opacity=1 ] (278.99,90.08) .. controls (278.99,88.37) and (280.02,86.99) .. (281.29,86.99) .. controls (282.56,86.99) and (283.59,88.37) .. (283.59,90.08) .. controls (283.59,91.78) and (282.56,93.16) .. (281.29,93.16) .. controls (280.02,93.16) and (278.99,91.78) .. (278.99,90.08) -- cycle ;
\draw  [fill={rgb, 255:red, 0; green, 0; blue, 0 }  ,fill opacity=1 ] (312.33,90.18) .. controls (312.33,88.48) and (313.36,87.09) .. (314.63,87.09) .. controls (315.9,87.09) and (316.93,88.48) .. (316.93,90.18) .. controls (316.93,91.88) and (315.9,93.26) .. (314.63,93.26) .. controls (313.36,93.26) and (312.33,91.88) .. (312.33,90.18) -- cycle ;
\draw    (314.63,60.86) -- (314.63,90.18) ;
\draw  [fill={rgb, 255:red, 0; green, 0; blue, 0 }  ,fill opacity=1 ] (341.79,91.45) .. controls (341.79,89.75) and (342.82,88.37) .. (344.09,88.37) .. controls (345.36,88.37) and (346.39,89.75) .. (346.39,91.45) .. controls (346.39,93.16) and (345.36,94.54) .. (344.09,94.54) .. controls (342.82,94.54) and (341.79,93.16) .. (341.79,91.45) -- cycle ;
\draw    (314.63,60.86) -- (344.09,91.45) ;
\draw  [fill={rgb, 255:red, 0; green, 0; blue, 0 }  ,fill opacity=1 ] (341.79,123.96) .. controls (341.79,122.25) and (342.82,120.87) .. (344.09,120.87) .. controls (345.36,120.87) and (346.39,122.25) .. (346.39,123.96) .. controls (346.39,125.66) and (345.36,127.04) .. (344.09,127.04) .. controls (342.82,127.04) and (341.79,125.66) .. (341.79,123.96) -- cycle ;
\draw  [fill={rgb, 255:red, 0; green, 0; blue, 0 }  ,fill opacity=1 ] (318.98,124.59) .. controls (318.98,122.89) and (320.01,121.51) .. (321.28,121.51) .. controls (322.55,121.51) and (323.58,122.89) .. (323.58,124.59) .. controls (323.58,126.3) and (322.55,127.68) .. (321.28,127.68) .. controls (320.01,127.68) and (318.98,126.3) .. (318.98,124.59) -- cycle ;
\draw    (344.09,91.45) -- (321.28,124.59) ;
\draw    (344.09,91.45) -- (344.09,123.96) ;
\draw  [fill={rgb, 255:red, 0; green, 0; blue, 0 }  ,fill opacity=1 ] (368.4,123.32) .. controls (368.4,121.62) and (369.43,120.24) .. (370.7,120.24) .. controls (371.97,120.24) and (373,121.62) .. (373,123.32) .. controls (373,125.02) and (371.97,126.4) .. (370.7,126.4) .. controls (369.43,126.4) and (368.4,125.02) .. (368.4,123.32) -- cycle ;
\draw    (344.09,91.45) -- (370.7,123.32) ;
\draw  [fill={rgb, 255:red, 0; green, 0; blue, 0 }  ,fill opacity=1 ] (304.25,124.59) .. controls (304.25,122.89) and (305.28,121.51) .. (306.55,121.51) .. controls (307.82,121.51) and (308.85,122.89) .. (308.85,124.59) .. controls (308.85,126.3) and (307.82,127.68) .. (306.55,127.68) .. controls (305.28,127.68) and (304.25,126.3) .. (304.25,124.59) -- cycle ;
\draw    (281.29,90.08) -- (306.55,124.59) ;
\draw  [fill={rgb, 255:red, 0; green, 0; blue, 0 }  ,fill opacity=1 ] (279.54,123.32) .. controls (279.54,121.62) and (280.57,120.24) .. (281.84,120.24) .. controls (283.11,120.24) and (284.14,121.62) .. (284.14,123.32) .. controls (284.14,125.02) and (283.11,126.4) .. (281.84,126.4) .. controls (280.57,126.4) and (279.54,125.02) .. (279.54,123.32) -- cycle ;
\draw    (281.29,90.08) -- (281.84,123.32) ;
\draw  [fill={rgb, 255:red, 0; green, 0; blue, 0 }  ,fill opacity=1 ] (253.4,122.68) .. controls (253.4,120.98) and (254.43,119.6) .. (255.7,119.6) .. controls (256.97,119.6) and (258,120.98) .. (258,122.68) .. controls (258,124.38) and (256.97,125.77) .. (255.7,125.77) .. controls (254.43,125.77) and (253.4,124.38) .. (253.4,122.68) -- cycle ;
\draw    (281.29,90.08) -- (255.7,122.68) ;

\end{tikzpicture}
    \end{center}
    \end{example}
    \begin{remark}\label{Remark: Cluster pictures as a subset of L}
        Let $y\in Crit_{X/Y}$ be a critical point. Then, we can view the cluster picture $\mathcal{D}_y$ as a nested collection of subsets inside $Y_{+}(y)(L)$, where $Y_{+}(y)$ is the formal fiber of $\mathcal{Y}^{min}$ over $y$ (\cite[Definition 10.1.39]{Liu-Alg-Geom}. In particular, since $y$ is a smooth point of $\mathcal{Y}^{min}$, the cluster picture $\mathcal{D}_y$, can be viewed as a collection of nested subsets inside $m_L\subset L$ (\cite[Proposition  10.1.40]{Liu-Alg-Geom}).
    \end{remark}
    \begin{definition}\label{Definition: Center of a cluster}
        Let $y\in \mathcal{Y}$ be a critical point of the cover $\varphi.$ Given a proper cluster $\textbf{s}$ in $\mathcal{D}_y$, the center of this cluster is a point $z_\textbf{s}$, satisfying
        $v_{\overline{F}}(z_\textbf{s}-r)\geq d_\textbf{s}$, for all $r\in \textbf{s}$. Here, $v_{\overline{F}}$ is the valuation of the field $\overline{F}$ extending $v_F$, and $\mathcal{D}_y$ is viewed as a collection of nested open subsets of $L$, as explained in Remark \ref{Remark: Cluster pictures as a subset of L}.
    \end{definition}
    \begin{lemma}\label{Lemma:Existence of branch separating model}
        There exists a semi-stable model $\mathcal{Y}/\mathcal{O}_F$ of $Y$, such that the closure of the branch locus $\mathcal{B}=\overline{\{B\}}\subset \mathcal{Y}$, satisfies the following properties.
        \begin{enumerate}
            \item Points in $\mathcal{B}\cap \mathcal{Y}_s$ consist of smooth points of $\mathcal{Y}_s.$
            \item Horizontal component in  $\mathcal{B}$ do not intersect each other, with the exception of pair of elements contained in a cluster with non-integer depth.
            \end{enumerate}
    \end{lemma}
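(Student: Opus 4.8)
The plan is to construct $\mathcal{Y}$ by a sequence of blowups of $\mathcal{Y}^{min}$ centered at smooth closed points of the special fiber, guided by the admissible collection of discs $\mathcal{D}_{X/Y}$. First I would record the local picture: the branch points of $B(L)$ specialize, under the reduction map $r_{\mathcal{Y}^{min}}$, to the finite set of critical points $Crit_{X/Y}\subset \mathcal{Y}^{min}_s$, and two distinct branch points share a specialization exactly when they lie in a common cluster. Since $Y$ has good reduction, $\mathcal{Y}^{min}_s=C_0$ is smooth, so every critical point is a smooth closed point. By Lemma \ref{Lemma:Critical-point-can-be-viewed-as-points-in-hyperelliptic-curve} the covering near each critical point $y$ has the same trivialization as a hyperelliptic curve $\rho_y\colon C_y\rightarrow \mathbb{P}^1_F$ at an associated point, so the problem of separating the branch points specializing to $y$ is identified with the separation of the roots of a hyperelliptic polynomial. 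This is precisely the situation handled in \cite{DDMM} by the disc combinatorics, and the isomorphism $\mathcal{D}_z\cong \mathcal{D}_y$ transports that construction back to $\mathcal{Y}^{min}$.

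The blowup construction then proceeds inductively over $\mathcal{D}_y$, ordered by depth. For each disc $D\in \mathcal{D}_y$ I would blow up the current model at the closed point determined by the center $z_\textbf{s}$ (Definition \ref{Definition: Center of a cluster}) of the corresponding cluster, viewing $\mathcal{D}_y$ as a nested collection of subsets of $m_L\subset L$ as in Remark \ref{Remark: Cluster pictures as a subset of L}. The completeness axiom of an admissible collection of discs guarantees that consecutive discs differ in depth by exactly $1$; this is what ensures that passing from one disc to the next is a single blowup at a smooth closed point, creating one new $\mathbb{P}^1$-component meeting the previous special fiber transversally. Blowing up a regular arithmetic surface at a smooth closed point of its reduced special fiber again yields a regular arithmetic surface whose special fiber is reduced with at worst nodal crossings, so semi-stability is preserved at each stage. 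Performing these blowups at every critical point independently, which is possible since the critical points are distinct closed points of $C_0$, produces the desired semi-stable model $\mathcal{Y}$.

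It then remains to verify properties (1) and (2). After the blowups, each branch point $r\in B(L)$ specializes to a point lying on the deepest component of the chain of discs containing it; because every blowup was centered at a smooth closed point, this specialization is a smooth point of $\mathcal{Y}_s$, which gives (1). Two branch points specialize to the same point of $\mathcal{Y}_s$ precisely when they lie in a common cluster that was never separated by a disc of integer depth. Here the hypothesis that $X$ is semi-stable enters through Theorem \ref{Theorem: Semi-stability-criterion-for 2:1-covers}: condition $(4)$ forces every principal cluster (of size at least $3$) to have integer depth, so each such cluster contributes a defining disc and its members are separated. Consequently the only clusters that can fail to be separated at integer depth are twin clusters of non-integer depth, whose two elements remain together, and this is exactly the exception in $(2)$.

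The main obstacle I anticipate is the bookkeeping needed to confirm that at every stage the prescribed center $z_\textbf{s}$ is a smooth closed point of the current special fiber, and that no earlier blowup has introduced a singularity there. This is controlled by the nesting and completeness of $\mathcal{D}_{X/Y}$ together with the local hyperelliptic model: on the component created by the parent disc $P(D)$, the center $z_\textbf{s}$ is the reduction of a point of $B(L)$ distinct from the finitely many nodes already present, so it is smooth. Verifying this cleanly, and checking that the resulting crossings are genuinely nodal rather than worse, is where the argument requires the most care; but via Lemma \ref{Lemma:Critical-point-can-be-viewed-as-points-in-hyperelliptic-curve} it reduces to the corresponding hyperelliptic computation of \cite{DDMM}, since the trivialization data that governs the singularity type is preserved under the local identification.
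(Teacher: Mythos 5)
Your proposal is correct in substance but takes a genuinely different route from the paper. The paper's proof is soft: it first replaces each twin cluster of non-integer depth by a rational center $z_{\textbf{s}}$ (whose existence is supplied by \cite[Lemma B.1]{DDMM}, using the semi-stability hypotheses), obtaining a modified divisor $B'$ whose horizontal components are pairwise disjoint sections, and then invokes \cite[Lemma 1.9]{LL99} to produce in one stroke a semi-stable model separating the components of $\overline{\{B'\}}$. You instead build the model explicitly by an inductive chain of blowups at smooth closed points indexed by the admissible discs $\mathcal{D}_{X/Y}$ — which is essentially Construction \ref{Construction:Y^{disc}}, introduced in the paper immediately after this lemma and observed there to satisfy its conclusions. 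Your approach buys more: it yields the specific model $\mathcal{Y}^{disc}$ with dual graph $T_{X/Y}$ rather than a bare existence statement, at the cost of the bookkeeping you flag (centers remaining smooth at each stage, crossings staying nodal), which the paper defers to Lemmas \ref{Lemma:Vanishing-order-mod-2-on-Y^{disc}}--\ref{Lemma: Singularity types of X^disc}. One small tightening: your claim that condition (4) of Theorem \ref{Theorem: Semi-stability-criterion-for 2:1-covers} (integrality for principal clusters) already separates all members of a principal cluster is slightly quick — what is actually needed is that \emph{every} cluster other than possibly a twin has integer depth, so that the full chain of defining discs down to the minimal cluster containing a given pair eventually separates them; this stronger statement is what the paper extracts from \cite[Proposition C.7]{DDMM} just before defining $\mathcal{D}_{X/Y}$, and with it your argument for property (2) closes correctly.
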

    \begin{proof}
        If the cluster picture associated to the covering $\varphi\colon X\rightarrow Y$ contains no cluster with non-integer depth, then each horizontal component in $\overline{\{B\}}\subset \mathcal{Y}^{min}$ is smooth. Now the proof of this lemma follows from \cite[Lemma 1.9]{LL99}.\par
        Assume that that the cluster picture of $\varphi\colon X\rightarrow Y$ contains clusters with non-integer depth. Let $\textbf{s}\subset B(L)$ be a twin cluster with non-integer depth. Since the cluster picture $\Sigma_{X/Y}$ satisfies the conditions given in Theorem \ref{Theorem: Semi-stability-criterion-for 2:1-covers}, by applying \cite[Lemma B.1]{DDMM}, we can find a rational point $z_\textbf{s}\in Y(F)$, where ${z}_{\textbf{s}}$ is the center of the cluster $\textbf{s}.$\par
        Construct a divisor $B^{\prime} \subset Y(F)$ by substituting each twin cluster $\textbf{s}$ with non-integer depth in $B(L)$ with a rational center $z_\textbf{s}\in Y(F)$. By applying \cite[Lemma 1.9]{LL99}, we can construct a semi-stable model $\mathcal{Y}$ separating horizontal component of $\overline{\{B^{\prime}\}}.$ By the way that $B^{\prime}$ is constructed, it follows that $\mathcal{Y}$ is a model satisfying the conditions given in this lemma.
    \end{proof}
    Next, we give an explicit construction of a model of $Y$, denoted by $\mathcal{Y}^{disc}$, and we show that it is a regular model of $Y$ with dual graph isomorphic to $T_{X/Y}.$ In particular, evident from Construction \ref{Construction:Y^{disc}}, one can see that $\mathcal{Y}^{disc}$ satisfies the conditions given in Lemma \ref{Lemma:Existence of branch separating model}.
    \begin{construction}\label{Construction:Y^{disc}}
        Starting from $\mathcal{Y}^{min}$, define $\pi_1\colon \mathcal{Y}_1\rightarrow \mathcal{Y}^{min}$ to be the blowup map centered at a critical point $\textbf{s}\subset \mathcal{Y}^{min}$ where $d_\textbf{s}>\frac{1}{2}$. Assuming that $\mathcal{Y}_i$ is constructed, we define $\pi_{i+1}\colon \mathcal{Y}_{i+1}\rightarrow \mathcal{Y}_i$ to be the blowup at a critical point $\textbf{s}\subset \mathcal{Y}_i$ where $\textbf{s}$ is a critical point of the covering $\varphi\colon X\rightarrow Y$ with respect to the proper model $\mathcal{Y}_i$ and $d_{\textbf{s}}>\frac{1}{2}.$ This process terminates as there are finitely many clusters in $\Sigma_{X/Y}.$ We denote the model of $Y$ constructed above by $\mathcal{Y}^{disc}$.
    \end{construction}
    \begin{definition}\label{Definition: pi^{Disc}}
        By Construction \ref{Construction:Y^{disc}}, the model $\mathcal{Y}^{disc}$ is birational to $\mathcal{Y}^{min}$. We denote this birational map by $\pi^{disc}\colon \mathcal{Y}^{disc}\rightarrow\mathcal{Y}^{min}$.
    \end{definition}
    It follows from Construction \ref{Construction:Y^{disc}},  the birational map $\pi^{disc}$ decomposes as a sequence of consecutive blowups
    \begin{align}
        \mathcal{Y}^{disc}\cong\mathcal{Y}_n\xrightarrow{\pi_{n}}\mathcal{Y}_{n-1}\xrightarrow{\pi_{n-1}}\dots \mathcal{Y}_1\xrightarrow{\pi_1}\mathcal{Y}_0=\mathcal{Y}^{min},\tageq\label{Equation:Decompostion of pi^disc}
    \end{align}
    where each birational map $\pi_i$ is a blowup centered at a smooth closed point of $(\mathcal{Y}_ {i-1})_s.$
    \begin{remark}\label{Remark: Disk-Component correspondence}
        We can identify the irreducible components of $\mathcal{Y}^{disc}$ with discs in $\mathcal{D}_{X/Y}.$ Critical points of the covering $\varphi\colon X\rightarrow Y$ with respect to the model $\mathcal{Y}^{min}$ are in bijection with maximal discs in $\mathcal{D}_{X/Y}$. Assume that $\pi_1\colon \mathcal{Y}_1\rightarrow \mathcal{Y}^{min}$ is the blowup centered at the critical point corresponding to the maximal disc $D$. Denote the irreducible component of $\mathcal{Y}$ contained in the exceptional locus of $\pi_1$ by $\Gamma_D.$ Define $\mathcal{D}_{X/Y}^{\mathcal{Y}_1}$ from $\mathcal{D}_{X/Y}$, by deleting $D$ from this collection and subtracting one from the depth of all subdisc of $D$. Now the critical point of the covering $\varphi\colon X\rightarrow Y$ with respect to $\mathcal{Y}_1$ corresponds to maximal discs in $\mathcal{D}_{X/Y}^{\mathcal{Y}_1}.$ Following this process, the birational map $\pi_{i+1}\colon \mathcal{Y}_{i+1}\rightarrow \mathcal{Y}_i$ is given by the blowup centered at a critical point corresponding to a maximal disc in $\mathcal{D}_{X/Y}^{\mathcal{Y}_{i}}$. Given a disc $D\subset \mathcal{D}_{X/Y}$, we denote the corresponding irreducible component to this disc in $\mathcal{Y}^{disc}_s$ by $\Gamma_D.$
    \end{remark}
    \begin{definition}
        For any disc $D\in \mathcal{D}_{X/Y}$ define:
        \begin{align}\label{Equation: vD}
            v_D=v_\varphi+ \Sigma_{r\in D_{max}}d_{r\wedge D}.
        \end{align}
        If $D=D_0$, we set $v_D=v_\varphi.$
    \end{definition}
    \begin{remark}
        For $\textbf{s}\subset B(L)$ a principal cluster, we have $v_\textbf{s}=v_{D(\textbf{s})}.$
    \end{remark} 
    \begin{lemma}\label{Lemma:Vanishing-order-mod-2-on-Y^{disc}}
        The model $\mathcal{Y}^{disc}$ is a semi-stable, regular proper model of $Y$ with dual graph isomorphic $T_{X/Y}.$ Given $D\in \mathcal{D}_{X/Y}$, we have:
        \begin{align}
            v_\varphi({\Gamma_D})=v_D \mbox{ (mod 2)} \tageq\label{Equation: Mod 2 congruency}.
        \end{align}
    \end{lemma}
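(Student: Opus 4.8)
\emph{Strategy.} I would argue by induction along the blowup decomposition of Equation~\ref{Equation:Decompostion of pi^disc},
$$\mathcal{Y}^{disc}=\mathcal{Y}_n\xrightarrow{\pi_n}\cdots\xrightarrow{\pi_1}\mathcal{Y}_0=\mathcal{Y}^{min},$$
establishing all three assertions simultaneously. By Construction~\ref{Construction:Y^{disc}} each $\pi_i\colon\mathcal{Y}_i\to\mathcal{Y}_{i-1}$ is the blowup of the special fibre at a single smooth closed point, and by Lemma~\ref{Lemma: Extenstion-of-the-line-bundle-to-the-blowup-map} the covering over $\mathcal{Y}_i$ is again of the form $S(\mathcal{Y}_i,\pi_i^*\mathcal{L},\pi_i^*s)$, so the defining section merely pulls back at each stage. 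Properness is automatic, since $\mathcal{Y}^{min}$ is proper and blowups preserve properness; regularity follows because $\mathcal{Y}^{min}$ is regular (being smooth over $\mathcal{O}_F$) and the blowup of a regular two–dimensional scheme at a closed point is again regular (\cite[Section 8.1]{Liu-Alg-Geom}).

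\emph{Semi-stability and the dual graph.} Here I would use the local picture of a single blowup. At a smooth point $y$ of $(\mathcal{Y}_{i-1})_s$ one has $\widehat{\mathcal{O}}_{\mathcal{Y}_{i-1},y}\cong\mathcal{O}_F[[w]]$ with maximal ideal $(\pi_F,w)$ and special fibre $V(\pi_F)$; blowing up $(\pi_F,w)$ produces an exceptional $\mathbb{P}^1$ meeting the strict transform of the old component transversally in the single node $\mathcal{O}_F[[u,v]]/(uv-\pi_F)$. Thus the special fibre stays reduced with only nodal singularities, so $\mathcal{Y}^{disc}$ is semi-stable. For the dual graph, each blowup introduces exactly one new component, which by Remark~\ref{Remark: Disk-Component correspondence} is the component $\Gamma_D$ attached to a maximal disc $D$, and this component meets only the strict transform of $\Gamma_{P(D)}$ (the centre $y$ being a smooth point of $\Gamma_{P(D)}$ away from every other component). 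Since no new incidences among previously existing components are created, the incidence graph is built up exactly as the rooted tree of parent/child relations, i.e. it is isomorphic to $T_{X/Y}$.

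\emph{The congruence.} For the numerical statement I would compute $v_\varphi(\Gamma_D)$ as the order of vanishing of $\pi_i^*s$ along the exceptional divisor $\Gamma_D$. By Definition~\ref{Definition:Order-of-vanishing} this equals the coefficient of $\Gamma_D$ in $\operatorname{div}(\pi_i^*s)=\pi_i^*\operatorname{div}(s)$, which by the blowup formula is the multiplicity $\operatorname{mult}_y\operatorname{div}(s)$. Near $y$ the local trivialization factors as
$$\hat t_y=\pi_F^{\,v_\varphi(\Gamma_{P(D)})}\cdot u(w)\cdot\prod_{r\in D\cap B(L)}(w-w_r),$$
with $u$ a unit, the first factor recording the vertical order along $\Gamma_{P(D)}$, and the branch points $w_r$ through $y$ having positive valuation. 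Since $w\notin(\pi_F,w)^2$ each factor $w-w_r$ has order $1$ in $\mathcal{O}_F[[w]]$, and orders add because $\operatorname{gr}_{(\pi_F,w)}\mathcal{O}_F[[w]]$ is a domain; this gives the recursion $v_\varphi(\Gamma_D)=v_\varphi(\Gamma_{P(D)})+|D\cap B(L)|$. Telescoping along the chain $D_0=D^{(0)}>D^{(1)}>\dots>D^{(k)}=D$ of discs with $d_{D^{(j)}}=j$, and using the base case $v_\varphi(\Gamma_{D_0})=v_\varphi$, yields $v_\varphi(\Gamma_D)=v_\varphi+\sum_{j=1}^{k}|D^{(j)}\cap B(L)|$. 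Finally the combinatorial identity $\sum_{r\in D_{max}}d_{r\wedge D}=\sum_{j=1}^{k}|D^{(j)}\cap B(L)|$—obtained by counting, for each $j$, those $r$ with $d_{r\wedge D}\ge j$, which are precisely the $r\in D^{(j)}$—matches this with the definition of $v_D$ in Equation~\ref{Equation: vD}.

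\emph{Main obstacle.} The technical heart is the local identity $\operatorname{mult}_y\operatorname{div}(s)=v_\varphi(\Gamma_{P(D)})+|D\cap B(L)|$ together with the matching combinatorial identity above. Two points require care: branch points defined over $L=F(B)$ appear in Galois orbits cut out over $\mathcal{O}_F$ by Weierstrass polynomials, so one must check that such an orbit contributes an order equal to its cardinality in $B(L)$ (each geometric factor $w-\sigma(w_r)$ having order $1$, since $k_F$ is algebraically closed); and the trivialization $\hat t_y$ is only well defined up to a unit, which is harmless by Lemma~\ref{Lemma: Units-are-squares}. Only the parity of $v_\varphi(\Gamma_D)$ is needed downstream—it decides, via Lemma~\ref{Lemma:Trivialization-of-the-covering-using-vanishing-order}, whether the components of $\mathcal{X}_s$ over $\Gamma_D$ are reduced—so it is natural to record the result as the congruence \eqref{Equation: Mod 2 congruency}. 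If one prefers to sidestep the bookkeeping, the computation may instead be transported to a hyperelliptic model via Lemma~\ref{Lemma:Critical-point-can-be-viewed-as-points-in-hyperelliptic-curve}, reducing the parity claim to the order-of-vanishing computation for hyperelliptic curves in \cite{DDMM}; the only new ingredient, the constant $v_\varphi=v_\varphi(\Gamma_{D_0})$, is additive and is supplied by the base case of the induction.
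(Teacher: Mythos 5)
Your proposal is correct and follows essentially the same route as the paper: induction along the blowup decomposition of $\pi^{disc}$, with semi-stability, regularity and the tree structure read off from the local picture of a point blowup, and the congruence obtained from the multiplicity of the pulled-back section (via Lemma \ref{Lemma: Extenstion-of-the-line-bundle-to-the-blowup-map}) along each exceptional divisor. The only differences are presentational: the paper invokes \cite[Theorem 9.2.23]{Liu-Alg-Geom} for the recursion $v_\varphi(\Gamma_D)=v_\varphi(\Gamma_{P(D)})+\sum_{y}[k(y):F]$ and reduces mod $2$ at each inductive step, where you derive the same recursion from a local factorization of $\hat t_y$ and telescope to an exact formula before reducing mod $2$.
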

    \begin{proof}
        The fact that $\mathcal{Y}^{disc}$ is semi-stable follows from the construction of this model. Note that in Construction \ref{Construction:Y^{disc}}, $\mathcal{Y}_{i+1}$ is constructed from $\mathcal{Y}_{i}$ by a blowup centered at a smooth closed point in $(\mathcal{Y}_i)_s$, therefore assuming $\mathcal{Y}_i$ is a regular proper semi-stable model of $Y$, then so is $\mathcal{Y}_{i+1}$. By an inductive argument we see that $\mathcal{Y}^{disc}$ is a proper semi-stable model of $Y$ with the dual graph $T_{X/Y}$.\par
        The birational map $\pi^{disc}\colon \mathcal{Y}^{disc}\rightarrow \mathcal{Y}^{min}$ decomposes as a sequence of consecutive blowups 
        \begin{align*}
            \mathcal{Y}^{disc}\cong\mathcal{Y}_n\xrightarrow{\pi_{n}}\mathcal{Y}_{n-1}\xrightarrow{\pi_{n-1}}\dots \mathcal{Y}_1\xrightarrow{\pi_1}\mathcal{Y}_0=\mathcal{Y}^{min}.
        \end{align*}
        Note that each blowup map corresponds to a disc $D\in \mathcal{D}_{X/Y}.$ For any $i\leq n$, we can view $(\mathcal{{Y}}_i)_s$ as a subset of $\mathcal{Y}^{disc}_s.$ Assume that the statement of Equation \ref{Equation: Mod 2 congruency} is true for all the irreducible components in $\mathcal{Y}_i$ for some $i<n.$ This is true for $\mathcal{Y}_0$, since by definition, we have that $v_{D_0}=v_{\varphi}.$ \par
        Suppose that $\pi_{i+1}\colon \mathcal{Y}_{i+1}\rightarrow \mathcal{Y}_{i}$ is the blowup map corresponding to the disc $D\in \mathcal{D}_{X/Y}.$ Note that the map $\pi_{i+1}$ is given by a blowup centered at a point in $\Gamma_{P(D)}$. To complete the proof, it suffices to show that
        \begin{align*}
            v_{D}-v_{P(D)}=v_{\varphi}(\Gamma_D)-v_\varphi({\Gamma_{P(D)}}) \mbox{\quad (mod 2)}
        \end{align*}
        By the Equation \ref{Equation: vD}, we have the equality $v_D-v_{P(D)}=\vert  D \vert $ $(mod$ $2)$. Let $\mathcal{L}\in Pic(\mathcal{Y}_i)$ and $s\in \mathcal{L}^{\otimes 2} (\mathcal{Y}_i)$ be such that
        \begin{align*}
            N(\mathcal{Y}_i,K(X))=S(\mathcal{Y}_i,\mathcal{L},s).   
        \end{align*}
        By Lemma \ref{Lemma: Extenstion-of-the-line-bundle-to-the-blowup-map}, we see that \[N(\mathcal{Y}_{i+1},K(X))=S(\mathcal{Y}_{i+1},\pi_{i+1}^*(\mathcal{L}),\pi_{i+1}^*s).\]
        Therefore, $v_\varphi(\Gamma_D)$ is equal to the multiplicity of $div(\pi_{i+1}^*s)$ at $\Gamma_D$. By applying \cite[Theorem 9.2.23]{Liu-Alg-Geom}, we get
        \begin{align*}
            v_\varphi({\Gamma(D)})=v_\varphi({\Gamma_{P(D)}})+\sum_{y\in r_{\mathcal{Y}_i}^{-1}(D)}[k(y):F],
        \end{align*}
        where $r_{\mathcal{Y}_i}$ is the reduction map with respect to the proper model $\mathcal{Y}_i$ and we view $D$ as a subset of $Y^\circ.$ Since except possibly twin clusters with depth in $\frac{1}{2}\mathbb{Z}$, all clusters are with integer depth, we see that \[v_\varphi(\Gamma(D))-v_\varphi(P(\Gamma(D))= \vert D \vert\quad \mbox{(mod 2)}.\]
    \end{proof}
    \begin{definition}
        Define $\mathcal{X}^{disc}:=N(\mathcal{Y}^{disc},K(X))$.
    \end{definition}
    \begin{lemma}\label{Lemma:Number-of-ireducible-components-after-normalization}
        Let $D$ be a disc in ${D}_{X/Y}$, and let $\Gamma_D$ denote the corresponding irreducible component in $\mathcal{Y}^{disc}_s$. Then,
        \begin{enumerate}
            \item If $v_D$ is odd, then $D$ is an odd disc and  it is not a defining disc.
            \item There are two irreducible components contained in $\varphi^{-1}(\Gamma_D)$ if and only if $D$ is an \"ubereven disc.
            \end{enumerate}
    \end{lemma}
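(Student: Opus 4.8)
The plan is to read off both statements from two facts already in hand: Lemma~\ref{Lemma:Trivialization-of-the-covering-using-vanishing-order}, which detects both the reducedness and the number of components above $\Gamma_D$ from the trivialization $\alpha_\xi$ at the generic point $\xi$ of $\Gamma_D$, and Lemma~\ref{Lemma:Vanishing-order-mod-2-on-Y^{disc}}, which supplies $v_\varphi(\Gamma_D)\equiv v_D\pmod 2$ together with the recursion $v_D-v_{P(D)}\equiv|D\cap B(L)|\pmod 2$. Throughout I use the standing hypotheses: by Assumption~\ref{Assumption:Main assumption} the residue field $k_F$ is algebraically closed, so every unit is a square, and since $X$ is semi-stable $v_\varphi$ is even (condition (1) of Theorem~\ref{Theorem: Semi-stability-criterion-for 2:1-covers}). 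I also record that a cluster of odd size has size $\geq 3$, hence is principal, while twins (including the floors of non-integer-depth clusters) are even.

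For part (1) I would prove, by induction down the tree $T_{X/Y}$, the equivalent statement that \emph{every even disc and every defining disc has even $v_D$}. The base case is $D_0$, where $v_{D_0}=v_\varphi$ is even. For the inductive step I use $v_D\equiv v_{P(D)}+|D\cap B(L)|\pmod 2$. A defining odd disc is $D(\mathbf{s})$ for a principal $\mathbf{s}$, so $v_D=v_{\mathbf{s}}$ is even by condition (4) of Theorem~\ref{Theorem: Semi-stability-criterion-for 2:1-covers}; hence defining discs split into the principal case (even $v_D$ by condition (4)) and the twin/root case (which are even discs). The combinatorial heart is the observation that an odd non-defining disc is necessarily an intermediate disc lying over an odd principal cluster, and that such an intermediate disc has a \emph{unique} child, carrying the same (odd) branch set. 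Consequently the parent of an even disc is never an odd non-defining disc: it is either even (inductive hypothesis gives even $v$) or an odd defining disc (condition (4) gives even $v$). In both cases $v_{P(D)}$ is even, so for $D$ even we get $v_D\equiv v_{P(D)}\equiv 0$. Thus $v_D$ can be odd only for odd intermediate discs, which is exactly the assertion of part (1).

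For part (2) I would compute $\alpha_\xi$ along $\Gamma_D$. For $D\neq D_0$ the component $\Gamma_D$ is an exceptional $\mathbb{P}^1_{k_F}$ from Construction~\ref{Construction:Y^{disc}}, so $k(\xi)=k_F(t)$. Tracking $s$ through the blowups via Lemma~\ref{Lemma: Extenstion-of-the-line-bundle-to-the-blowup-map}, and using $v_{D'}-v_D=|D'\cap B(L)|$ at the point $t_{D'}$ where each immediate subdisc $D'$ is attached, the reduced trivialization equals, up to squares and up to a factor in $k_F^\times$ (which is a square as $k_F$ is algebraically closed), the rational function $\prod_{D'}(t-t_{D'})^{|D'\cap B(L)|}$, the product running over immediate subdiscs of $D$. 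By Lemma~\ref{Lemma:Trivialization-of-the-covering-using-vanishing-order}(2) there are two components over $\Gamma_D$ exactly when this is a nonzero square in $k_F(t)$, i.e.\ when every exponent is even, i.e.\ when every immediate subdisc of $D$ is even; the order at the parent point $\infty$ is then automatically even by the degree-zero relation on $\mathbb{P}^1$, so the condition is precisely überevenness, and part (1) guarantees $v_\varphi(\Gamma_D)$ is even so that $\alpha_\xi$ really is a unit. The divisor computation is cleanest when transported, via Lemma~\ref{Lemma:Critical-point-can-be-viewed-as-points-in-hyperelliptic-curve}, to the associated hyperelliptic curve $C_{\mathbf{s}}$, where it becomes the classical computation of \cite{DDMM}.

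I expect the main obstacle to be precisely this divisor computation, and above all the base component $D=D_0$. There $\Gamma_{D_0}=C_0$ has genus $g(Y)>0$, so $k(\xi_0)=k_F(C_0)$ is the function field of a positive-genus curve, and a trivialization with even divisor need \emph{not} be a square: it can define a connected unramified double cover. Thus überevenness of $D_0$ only forces the cover over $C_0$ to be \'etale, and concluding that it actually \emph{splits} requires extra input about the class of $\mathcal{L}|_{C_0}$ and $s|_{C_0}$ in $\mathrm{Pic}(C_0)/2$. The argument for the exceptional components does not encounter this difficulty because $\mathrm{Pic}(\mathbb{P}^1)$ is $2$-torsion-free, so the square test reduces to parity of exponents; the genuine work is isolating and disposing of the $2$-torsion obstruction on the single genus-carrying component $C_0$.
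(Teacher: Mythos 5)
Your part (1) is correct and in fact more complete than what the paper records: the paper simply cites \cite[Lemma 5.14]{DDMM} together with Lemma \ref{Lemma:Locally-analytic-structure-of-critical-points}, whereas your induction down $T_{X/Y}$ using $v_D\equiv v_{P(D)}+\vert D\cap B(L)\vert \pmod 2$, condition (4) of Theorem \ref{Theorem: Semi-stability-criterion-for 2:1-covers}, and the observation that an odd non-defining disc is an intermediate disc whose unique child carries the same odd branch set, is a self-contained proof of the same assertion. For part (2) on the exceptional components $D\neq D_0$ your route is essentially the paper's: the paper runs the computation pointwise, showing that $\mathrm{Spec}(\hat{\mathcal{O}}_{\mathcal{Y}_i,y}\otimes \mathcal{O}_{\mathcal{X}_i}\otimes k_F)$ splits at a critical point $y$ exactly when the corresponding subdisc is even, while you package the same information as the parity of the divisor of the reduced trivialization on $\mathbb{P}^1_{k_F}$; the two are equivalent because $\mathrm{Pic}(\mathbb{P}^1_{k_F})$ has no $2$-torsion and $k_F^\times=(k_F^\times)^2$.

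The obstruction you isolate at $D_0$ is a genuine gap, and it is a gap in the paper's proof as well, not only in yours. The paper's argument rests on the claim that $\varphi_i^{-1}(\Gamma_{D_i})$ has two components if and only if the cover splits in the complete local ring at every critical point of $\Gamma_{D_i}$; the ``if'' direction is precisely the local-to-global implication that fails on a component of positive genus. Concretely, when $D_0$ is \"ubereven the reduced trivialization $\bar\alpha$ at the generic point of $C_0$ has even divisor, so $\varphi^{-1}(C_0)\rightarrow C_0$ is everywhere locally split, but $\bar\alpha$ is a square in $k_F(C_0)$ only if the $2$-torsion class $[\tfrac{1}{2}\mathrm{div}(\bar s)]-[\mathcal{L}\vert_{C_0}]\in \mathrm{Pic}^0(C_0)[2]$ vanishes; otherwise $\varphi^{-1}(C_0)$ is the connected \'etale double cover classified by that class. \"Ubereven-ness constrains only the multiplicities of $\mathrm{div}(\bar s)$, not this class, so for $g(Y)\geq 1$ one can arrange (for instance by twisting $\mathcal{L}$ by a nontrivial $2$-torsion line bundle) an \"ubereven $D_0$ with irreducible $\varphi^{-1}(C_0)$. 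Part (2) for $D=D_0$ therefore needs either the additional hypothesis that this class is trivial, or a corrected statement recording the class as an extra invariant; your instinct that further input on $\mathcal{L}\vert_{C_0}$ and $s\vert_{C_0}$ in $\mathrm{Pic}(C_0)/2$ is required is exactly right. Note that Lemma \ref{Lemma:Critical-point-can-be-viewed-as-points-in-hyperelliptic-curve} cannot repair this, since it only transports the formal local structure at a single critical point and forgets the global $2$-torsion data of $C_0$.
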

    \begin{proof}  
        Part (1), follows from \cite[Lemma 5.14]{DDMM} and also Lemma \ref{Lemma:Locally-analytic-structure-of-critical-points}.\par
        Let 
        \begin{align*}
            \mathcal{Y}^{disc}\cong\mathcal{Y}_n\xrightarrow{\pi_{n}}\mathcal{Y}_{n-1}\xrightarrow{\pi_{n-1}}\dots \mathcal{Y}_1\xrightarrow{\pi_1}\mathcal{Y}_0=\mathcal{Y}^{min},
        \end{align*}
        be the decompostion of $\pi^{disc}\colon \mathcal{Y}^{disc}\rightarrow \mathcal{Y}^{min}$ as a sequence of blowups centered at smooth closed points as it is given in \ref{Equation:Decompostion of pi^disc}. Note that each blowup map $\pi_i\colon\mathcal{Y}_i\rightarrow \mathcal{Y}_{i-1}$ corresponds to a disc  $D_i\in \mathcal{D}_{\mathcal{X/Y}},$ therefore, the decomposition above gives an enumeration of discs in $\mathcal{D}_{X/Y}.$ For each $i\leq n$, $\Gamma_{D_i}$ is contained in the exceptional locus of the map $\pi_{i}$. Moreover, following the Construction \ref{Construction:Y^{disc}}, we can view $\Gamma_{D_i}$ as a subset of $\mathcal{Y}_j$, for $j\geq i.$ \par
        By Lemma \ref{Lemma: Extenstion-of-the-line-bundle-to-the-blowup-map}, we can find a collection of line bundles $\mathcal{L}_i\in Pic(\mathcal{Y}_i)$ and sections $s_i\in \mathcal{L}_i^{\otimes 2}$, satisfying the following conditions.
        \begin{itemize}
            \item  For each $i\leq n$, let $\varphi_i\colon \mathcal{X}_i:=(\mathcal{Y}_i,K(X))\rightarrow \mathcal{Y}_i$ be the normalization map, then we have $\mathcal{X}_i\cong S(\mathcal{Y}_i,\mathcal{L}_i,s_i)$.   \item For any $i \leq n-1$, $\mathcal{L}_{i+1}=\pi_{i+1}^*\mathcal{L}_i$, and $s_{i+1}=\pi_{i+1}^*{s_i}.$ 
        \end{itemize}
        Using the above collection $\{(\mathcal{L}_i,s_i)\}$ and applying Lemma \ref{Lemma:Trivialization-of-the-covering-using-vanishing-order}, we get that the number of irreducible components of $(\mathcal{X}_i)_s$ contained in $\varphi_i^{-1}(\Gamma_{D_i})$, is equal to the number of irreducible components of $(\mathcal{X})_s$ contained contained in $\varphi^{-1}(\Gamma_{D_i}).$ If $v_{D_i}$ is odd, then $\Gamma_{D_i}$ is in the branch locus of the covering $\varphi\colon \mathcal{X}^{disc}\rightarrow \mathcal{Y}^{disc}$. So we can assume that $v_{D_i}$ is even. In particular, by Lemma \ref{Lemma:Trivialization-of-the-covering-using-vanishing-order}, $\Gamma_{D_i}$ is not in the branch locus of $\varphi_i$ and $\varphi_i\colon \varphi^{-1}(\Gamma_{D_i})\rightarrow \Gamma_{D_i}$ is a degree two cover of curves over $k_F$. Note that, $\varphi_i^{-1}(\Gamma_{D_i})$ has two irreducible components, if and only if for any critical point $y\in \mathcal{Y}_i$ contained in $\Gamma_{D_i}$,
        \begin{align*}
            Spec(\hat{\mathcal{O}}_{\mathcal{Y}_i,y}\otimes\mathcal{O}_{\mathcal{X}_i}\otimes k_F)
        \end{align*}
        has two irreducible components. Assume that $y$ corresponds to a subdisc $D\subset D_i$ and let $r=\vert D_i\vert$. Given that $\hat{t}_y\in \mathcal{O}_F[[w]]$ is the trivialization of the cover $\varphi_i$ at $y$, Equation \ref{Equation:Local-Trivialization-isomorphism} and Remark \ref{Remark: Polynomial type of the local trivialization} gives that
        \begin{align*}
        \hat{\mathcal{O}}_{\mathcal{Y}_i,y}\otimes\mathcal{O}_{\mathcal{X}_i}\cong \mathcal{O}_F[[w,z]]/(z^2-\hat{t}_y)\cong \mathcal{O}_F[[w,z]]/(z^2-(w^r-\pi_Fg(w)),
        \end{align*}
        for some polynomial $g(w)\in \mathcal{O}_F[w]$ of degree less than $r.$ We note that $Spec(\hat{\mathcal{O}}_{\mathcal{Y}_i,y}\otimes\mathcal{O}_{\mathcal{X}_i}\otimes k_F)$ consists of two irreducible components if and only if $r$ is even. Consequently, $\varphi^{-1}(\Gamma_{D_i})$ contains two irreducibe component of $\mathcal{X}^{disc}_s$ if and only if $D_i$ is \"ubereven.
    \end{proof}
    \begin{proposition}\label{X_disc-is-regular}
        The model $\mathcal{X}^{disc}$ is regular.
    \end{proposition}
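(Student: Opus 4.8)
The plan is to deduce regularity of $\mathcal{X}^{disc}$ from the simultaneous desingularization criterion of Lemma \ref{Lemma:Simultaneous-desingularization}, applied to the regular semi-stable model $\mathcal{Y}^{disc}$. First I would fix, via Lemma \ref{Lemma: Extenstion-of-the-line-bundle-to-the-blowup-map} applied inductively along the tower in \ref{Equation:Decompostion of pi^disc}, a line bundle $\mathcal{L}\in Pic(\mathcal{Y}^{disc})$ and a section $s\in\mathcal{L}^{\otimes 2}(\mathcal{Y}^{disc})$ with $\mathcal{X}^{disc}=S(\mathcal{Y}^{disc},\mathcal{L},s)$, so that $div(s)=\sum_{D\in\mathcal{D}_{X/Y}}v_\varphi(\Gamma_D)\,\Gamma_D$. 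It then suffices to verify the two hypotheses of Lemma \ref{Lemma:Simultaneous-desingularization} for the components $\Gamma_D$ with $v_\varphi(\Gamma_D)$ odd.

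Condition (2), the regularity of each such component, I expect to be immediate. By Lemma \ref{Lemma:Vanishing-order-mod-2-on-Y^{disc}} the model $\mathcal{Y}^{disc}$ is regular and semi-stable with dual graph the tree $T_{X/Y}$; since a tree has no loops, no component of $\mathcal{Y}^{disc}_s$ self-intersects, and hence every $\Gamma_D$ is a smooth, and in particular regular, curve over $k_F$. So condition (2) holds for all components, regardless of the parity of $v_\varphi$.

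The main obstacle is condition (1): that two components with odd $v_\varphi$ never meet. Here I would argue by contradiction. Suppose $\Gamma_{D_1}\cap\Gamma_{D_2}\neq\emptyset$ with both $v_\varphi(\Gamma_{D_1})$ and $v_\varphi(\Gamma_{D_2})$ odd. Using the congruence $v_\varphi(\Gamma_D)\equiv v_D\pmod 2$ of Lemma \ref{Lemma:Vanishing-order-mod-2-on-Y^{disc}}, both $v_{D_1}$ and $v_{D_2}$ are odd; since $v_{D_0}=v_\varphi$ is even (as $X$ is semi-stable, by Proposition \ref{Proposition:Semi-stable-if-and-only-if-global-and-local-condition}), neither disc is the root $D_0$, so each has a parent. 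Because the edges of $T_{X/Y}$ record precisely the parent/child relations among discs, the two intersecting components are adjacent, so I may assume $D_1=P(D_2)$. The identity $v_{D_2}-v_{P(D_2)}\equiv |D_2|\pmod 2$ established in the proof of Lemma \ref{Lemma:Vanishing-order-mod-2-on-Y^{disc}} then forces $|D_2|$ to be even, i.e. $D_2$ is an even disc. This contradicts Lemma \ref{Lemma:Number-of-ireducible-components-after-normalization}(1), which asserts that $v_{D_2}$ odd forces $D_2$ to be an odd disc (and not a defining disc). With both hypotheses verified, Lemma \ref{Lemma:Simultaneous-desingularization} yields that $\mathcal{X}^{disc}=N(\mathcal{Y}^{disc},K(X))$ is regular, as claimed.
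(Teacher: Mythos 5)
Your overall strategy --- reduce to Lemma \ref{Lemma:Simultaneous-desingularization} applied to $\mathcal{Y}^{disc}$ --- is exactly the paper's, and your treatment of the vertical components is sound: condition (2) follows as you say from $T_{X/Y}$ being a tree, and your parity argument for condition (1) (adjacency in $T_{X/Y}$ means parent/child, $v_{D_2}-v_{P(D_2)}\equiv |D_2| \pmod 2$ forces $D_2$ even, contradicting Lemma \ref{Lemma:Number-of-ireducible-components-after-normalization}(1)) is a clean alternative to the paper's bare appeal to the semi-stability criterion.

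There is, however, a genuine gap: your formula $div(s)=\sum_{D}v_\varphi(\Gamma_D)\,\Gamma_D$ omits the horizontal part of the divisor. The section $s$ cuts out the branch locus on the generic fiber, so $div(s)$ contains every horizontal component $\overline{\{y\}}$, $y\in B(L)$, each with multiplicity $1$ --- which is odd. The hypotheses of Lemma \ref{Lemma:Simultaneous-desingularization} must therefore be verified for these components too, and this is where the real content of the proposition lies: one must check (i) that distinct horizontal components do not meet each other, which holds because Construction \ref{Construction:Y^{disc}} separates the branch points into distinct smooth points of $\mathcal{Y}^{disc}_s$ \emph{except} for twin clusters of non-integer depth, a case the paper has to flag and handle separately; and (ii) that no horizontal component meets a vertical component $\Gamma_D$ with $v_D$ odd --- here the ``not a defining disc'' half of Lemma \ref{Lemma:Number-of-ireducible-components-after-normalization}(1), which you quote but do not use, is precisely what guarantees that no branch point reduces onto such a $\Gamma_D$ (again up to the non-integer-depth twins). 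Without these checks the lemma does not apply: for instance, two branch points specializing to the same point give a local equation $z^2=w^2-c$ with $v(c)\geq 2$, whose normalization is not regular. The paper's proof records the congruence $div(s)\equiv \sum_D v_D\Gamma_D+\sum_{y\in B(L)}\overline{\{y\}} \pmod 2$ and devotes its argument exactly to this horizontal bookkeeping; you should add it.
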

    \begin{proof}
        Find $\mathcal{L}\in Pic(\mathcal{Y}^{disc})$ and $s\in \mathcal{L}^{\otimes 2}(\mathcal{Y}^{disc})$ so that $\mathcal{X}^{disc}=S(\mathcal{Y},\mathcal{L},s).$ Then by Lemma \ref{Lemma:Vanishing-order-mod-2-on-Y^{disc}} we see that
        \begin{align}
            div(s)\equiv \sum_{D\in \mathcal{D}_{X/Y}} v_D\Gamma_D + \sum_{y\in B(L)}\overline{\{y\}}\quad \mbox{(mod 2)}.
        \end{align}
        As $\varphi\colon X\rightarrow Y$ satisfies the semi-stability criterion of Theorem \ref{Theorem: Semi-stability-criterion-for 2:1-covers}, no two vertical divisors with odd multiplicity intersect. By Lemma \ref{Lemma:Number-of-ireducible-components-after-normalization}, any disc $D\in \mathcal{D}_{X/Y}$, if $v_D$ is odd, then $D$ is not a defining disc. Therefore, if $v_D$ is odd, the only horizontal components intersecting $\Gamma_D$, are horizontal components corresponding to twins with non-integer depth. Now by applying Lemma \ref{Lemma:Simultaneous-desingularization} we see that $\mathcal{X}^{disc}$ is regular.
    \end{proof}
    As $\mathcal{X}^{disc}$ is regular, it dominates the minimal regular model of $X$ over $F$. We denote this birational map by $\pi^{disc}\colon \mathcal{X}^{disc}\rightarrow \mathcal{X}^{min}$.
    \begin{lemma}\label{Lemma: Singularity types of X^disc}
        Let $x\in \mathcal{X}^{disc}$ be a singular point of $(\mathcal{X}^{disc})_s$. Set $y=\varphi(x).$ Then, the point $y$ is either the intersection point of two irreducible components of $(\mathcal{Y}^{disc})_s$ or, it is a critical point corresponding to a twin cluster with non-integer depth. In both cases, $x$ is a nodal singularity of $(\mathcal{X}^{disc})_s.$ Moreover, $\mathcal{X}^{disc}_s$ is semi-stable after contracting all of its exceptional curves.
    \end{lemma}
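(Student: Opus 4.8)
The plan is to reduce everything to a computation in the complete local ring at $y=\varphi(x)$, using that $\mathcal{X}^{disc}$ is regular (Proposition \ref{X_disc-is-regular}) and that $\mathcal{Y}^{disc}$ is semi-stable with its branch locus in good position (Lemma \ref{Lemma:Vanishing-order-mod-2-on-Y^{disc}} and Lemma \ref{Lemma:Existence of branch separating model}). First I would locate the singularities: over any point of $\mathcal{Y}^{disc}_s$ that is both a smooth point of the special fiber and disjoint from the branch locus, the induced map of special fibers is finite \'etale of degree two onto a smooth curve, so $\mathcal{X}^{disc}_s$ is smooth there. Hence a singular point $x$ can only lie over a point $y$ that is singular in $\mathcal{Y}^{disc}_s$, i.e. a transverse intersection of two components (a node, since $\mathcal{Y}^{disc}$ is semi-stable), or over a critical point; by Lemma \ref{Lemma:Existence of branch separating model} the two cases are disjoint, because every branch point specializes to a smooth point of $\mathcal{Y}^{disc}_s$.

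Next I would treat the critical points. Writing $\hat t_y=w^{r}-\pi_F g(w)$ as in Remark \ref{Remark: Polynomial type of the local trivialization}, with $r=\lvert r_{\mathcal{Y}^{disc}}^{-1}(y)\cap B(L)\rvert$, I claim $r\le 2$ on $\mathcal{Y}^{disc}$: a cluster of size $\ge 3$ is principal, hence of integer depth by Theorem \ref{Theorem: Semi-stability-criterion-for 2:1-covers}, while Construction \ref{Construction:Y^{disc}} has already blown up every cluster of depth $>\tfrac12$, so no three branch points can still specialize to one smooth point. For $r=1$, completing the square gives $\hat{\mathcal{O}}_{\mathcal{Y}^{disc},y}\otimes\mathcal{O}_{\mathcal{X}^{disc}}\cong\mathcal{O}_F[[z]]$ via $z^2=w$, so $x$ is a smooth point. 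For $r=2$ the two branch points form a twin whose residual depth on $\mathcal{Y}^{disc}$ is exactly $\tfrac12$ (integer-depth twins have been resolved), so after centering one gets $z^2=w^2-\pi_F u$ with $u$ a unit, and the complete local ring is $\mathcal{O}_F[[a,b]]/(ab-\pi_F u)$, an ordinary double point; this is precisely a twin of non-integer depth, and $x$ is nodal.

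Then I would analyse the nodes $y=\Gamma_{D}\cap\Gamma_{D'}$, where $\hat{\mathcal{O}}_{\mathcal{Y}^{disc},y}\cong\mathcal{O}_F[[u,v]]/(uv-\pi_F)$ and the section is $s=u^{v_{D'}}v^{v_{D}}\cdot(\text{unit})$ by Lemma \ref{Lemma:Vanishing-order-mod-2-on-Y^{disc}}. Since $\mathcal{X}^{disc}$ is the normalization of $z^2=u^{v_{D'}}v^{v_{D}}$, I would split into cases according to the parities of $v_D,v_{D'}$, which by Theorem \ref{Theorem: Semi-stability-criterion-for 2:1-covers} and Lemma \ref{Lemma:Number-of-ireducible-components-after-normalization} are never both odd and are even on defining discs. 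If both are even the square root separates the two sheets and the normalization is two disjoint copies of $\mathcal{O}_F[[u,v]]/(uv-\pi_F)$, so each of the two points over $y$ is a node. If exactly one is odd, absorbing the even square factor into a new variable shows the normalization is $\mathcal{O}_F[[u,\hat z]]/(u\hat z^{2}-\pi_F)$, whose reduced special fiber $u\hat z=0$ is a node while the preimage of the odd component appears with multiplicity two; by Lemma \ref{Lemma:Trivialization-of-the-covering-using-vanishing-order} and Lemma \ref{Lemma:Number-of-ireducible-components-after-normalization} this multiplicity-two component is exactly the genus zero preimage of an odd, non-defining disc.

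Finally, for the passage to $\mathcal{X}^{min}_s$: the only non-reduced, respectively genus zero, components of $\mathcal{X}^{disc}_s$ are these preimages of odd non-defining discs, which form chains of exceptional curves; contracting all exceptional curves of $\mathcal{X}^{disc}$ removes them and joins their reduced neighbours, leaving a reduced fiber with only nodal singularities, which is therefore semi-stable. I expect the main obstacle to be the bookkeeping in the node case: tracking the parities $v_D\bmod 2$ along the exceptional chains via Lemma \ref{Lemma:Vanishing-order-mod-2-on-Y^{disc}}, verifying that every multiplicity-two component is genuinely exceptional, and checking that contracting them produces transverse (nodal) crossings rather than worse singularities. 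The cleanest route for this, and for the $r=2$ critical-point computation, is to transport the statement to the hyperelliptic model of Lemma \ref{Lemma:Critical-point-can-be-viewed-as-points-in-hyperelliptic-curve}, where the corresponding assertions are the contents of the regular-model description in \cite{DDMM}.
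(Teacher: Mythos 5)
Your proposal is correct and follows essentially the same route as the paper: reduce to the complete local ring at $y$ using the branch-separating property of $\mathcal{Y}^{disc}$, note that only nodes of $\mathcal{Y}^{disc}_s$ and residual twins of depth $\tfrac12$ can contribute, compute $z^2=w^2-\pi_F u$ at the latter, handle the parity cases of $v_D$ at component intersections, and defer the contraction step to the hyperelliptic model and the results of \cite{DDMM}. The only real divergence is cosmetic: at intersection points $\Gamma_D\cap\Gamma_{D'}$ you normalize $z^2=u^{a}v^{b}$ over $uv-\pi_F$ explicitly, whereas the paper deduces the transversality $C_i.C_1=1$ from $C_1.\mathcal{X}^{disc}_s=0$ and the projection formula; both arguments are sound and the bookkeeping you flag as the main obstacle is exactly what the paper outsources to \cite[Lemma 4.10]{DDMM} and \cite[10.3.35]{Liu-Alg-Geom}.
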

    \begin{proof}
        Note that by Lemma \ref{Lemma:Existence of branch separating model}, $\overline{\{B(L)\}}\cap \mathcal{Y}^{disc}_s$ is contained in the smooth locus of $\mathcal{Y}^{disc}_s$. Consequently, we only need to consider the critical points of $\mathcal{\varphi}$ with respect to $\mathcal{Y}^{disc}$, which are contained in the smooth locus $\mathcal{Y}^{disc}_{sm}$ and the intersection points between irreducible components which are contained in the branch locus $\varphi\colon \mathcal{X}^{disc}\rightarrow \mathcal{Y}^{disc}$. 
        \par 
        Assume that $y$ is a critical point. Then, by Construction \ref{Construction:Y^{disc}} and Theorem \ref{Theorem: Semi-stability-criterion-for 2:1-covers}, we get that $m:=\vert (r_{\mathcal{Y}^{disc}})^{-1}(y)\cap B(L))\vert \in \{1,2\}$. If $m=1$, then $\varphi^{-1}(y)$ is a smooth point of $\mathcal{X}^{disc}_s$. If $m=2$, by Remark \ref{Remark: Polynomial type of the local trivialization},  we get the the local trivialization $\hat{t}_y\in \mathcal{O}_F[[w]]$ is given by 
        \[\hat{t}_y=w^2-\pi_F^r.\] 
        for some odd positive integer $r$. Using Equation \ref{Equation:Local-Trivialization-isomorphism}, we have 
        \begin{align*}
            \mathcal{O}_{\mathcal{X}^{disc}}\otimes_{\mathcal{O}_{\mathcal{Y}^{disc}}} \hat{\mathcal{O}}_{\mathcal{Y}^{disc},y}&\cong {\mathcal{O}}_{F}[[z,w]]/(z^2-(w^2-\pi^r))\\
            &\cong \mathcal{O}_F[[u,v]]/(uv- \pi^r),
        \end{align*}
        where the last isomorphism is provided by a change of variables. We prove the remaining statements of this lemma, by following the proof of \cite[Lemma 5.14]{DDMM}. \par
        Assume that $y\in \Gamma_{D_1}\cap \Gamma_{D_2}$. If both of these components are not in the branch locus, then points in $\varphi^{-1}(y)$ are nodal. Assume that $v_{D_1}$ is odd. By Lemma \ref{Lemma:Number-of-ireducible-components-after-normalization}, we get that $D_1$ is an odd disc which is not a defining disc. Assume that $D_1=P(D_2)$ and set $D_0:=P(D_1)$. In the case that $D_2=P(D_1),$ define $D_0$ to be the unique immediate subdisc of $D_1$. For $i=0,2$, $v_{D_i}$ is even and $\varphi^{-1}(\Gamma_{D_i})\cap \mathcal{X}^{disc}_s$ is consists of one irreducible component, as they are not \"ubereven. Define $C_i:=\varphi^{-1}(\Gamma_{D_i})\cap \mathcal{X}^{disc}_s$, for $i\in\{0,1,2\}$. 
        Note that by Lemma \ref{Lemma:Trivialization-of-the-covering-using-vanishing-order}, $C_0$ is an irreducible component of multiplicity 2. Considering the fact that $C_0.\mathcal{X}^{disc}_s=0$ (\cite[Theorem 9.1.21]{Liu-Alg-Geom}, and by applying the projection formula
        \cite[Theorem 9.2.12]{Liu-Alg-Geom}, we see that $C_0.C_1=C_2.C_1=1$. Therefore, $C_1$ intersects other irreducible components of $\mathcal{X}^{disc}_s$ transversally. \par
        This proves that $(\mathcal{X}^{disc}_s)_{red}$ is semi-stable. Note that by \cite[Lemma 4.10]{DDMM}, any component $\Gamma_D$ where $v_{D}$ is odd, intersect other components of $\mathcal{X}^{disc}_s$ in two smooth points contained in distinct reduced irreducible components. Therefore, by \cite[10.3.35]{Liu-Alg-Geom}, after contracting the exceptional curves if $\mathcal{X}^{disc}_s$, singularities of $\mathcal{X}^{min}$ remain nodal.
    \end{proof}
    Using Lemma \ref{Lemma:Locally-analytic-structure-of-critical-points} and  \cite[Proposition 5.12]{DDMM}, we can describe the special fiber $\mathcal{X}^{disc}$.
    \begin{proposition}\label{Proposition:Description-of-minimal-regular-model}
        For any disc $D\in \mathcal{D}_{X/Y}$, the irreducible components of $(\mathcal{X}^{disc})_s$ contained in $\varphi^{-1}(\Gamma_D)$ are given by
        \begin{enumerate}
        \item If $D=D(\textbf{s}_0)$ is the depth zero disc and $\textbf{s}_0$ is not \"ubereven , then $\varphi^{-1}(\Gamma_D)\subset \mathcal{X}^{disc}_s$ consists of one irreducible component intersecting itself for each twin cluster with depth $\frac{1}{2}.$
        \item  If $D=D(\textbf{s}_0)$ is the depth zero disc and $\textbf{s}_0$ is  \"ubereven, then $\varphi^{-1}(\Gamma_D)\subset \mathcal{X}^{disc}_s$ consists of two irreducible components intersecting each other for each twin cluster with depth $\frac{1}{2}$.
        \item If $D=D(\textbf{s})\neq D(\textbf{s}_0)$ for some principal cluster $\textbf{s}\subset B(L)$, and $\textbf{s}$ is \"ubereven, then $\varphi^{-1}(\Gamma_D)$ consists of two smooth curves intersecting each other for any twin clusters $\textbf{s}^{\prime}\leq \textbf{s}$ where $d_{\textbf{s}^{\prime}}=d_\textbf{s}+\frac{1}{2}.$
        \item If $D=D(\textbf{s})\neq D(\textbf{s}_0)$ for some non-\"ubereven cluster $\textbf{s}\subset B$, then $\varphi^{-1}(\Gamma_D)$ contains one irreducible component intersecting itself for each twin cluster $\textbf{s}^{\prime}\leq \textbf{s}$, such that $d_{\textbf{s}^{\prime}}=d_{\textbf{s}}+\frac{1}{2}.$
        \item If $D\cap B(L)=\textbf{s}$ where $\textbf{s}$ is a twin cluster such that $d_\textbf{s}=d_D+\frac{1}{2}$, then $\varphi^{-1}(\Gamma_D)$ consists of  two copies of $\mathbb{P}_{k_F}^1$ intersecting each other at a single point.
        \item if $v_D$ is odd then $\varphi^{-1}(\Gamma_D)$ consists of one irreducible component isomorphic to $\mathbb{P}^1_{k_F}$ and with self intersection -1.
        \item If $D$ does not satisfy any of the conditions given in (1)-(6) and if $\vert D \cap B(L) \vert $ is even (resp. odd) then $\varphi^{-1}(\Gamma_D)$ consists of two irreducible component (resp. one component) isomorphic to $\mathbb{P}^1_{k_F}.$
        \end{enumerate}
        Moreover, all singularities of $\mathcal{X}^{disc}_s$ are transversal and with the exception of type (6) irreducible components all irreducible components of $\mathcal{X}^{min}_s$ are reduced. 
    \end{proposition}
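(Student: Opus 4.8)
The plan is to reduce the whole description, component by component, to the hyperelliptic case of \cite[Proposition 5.12]{DDMM} via the locality principle of Lemma \ref{Lemma:Locally-analytic-structure-of-critical-points}, and to treat the depth-zero component separately since it is the only piece the hyperelliptic picture does not see. The construction of $\mathcal{Y}^{disc}$ (Construction \ref{Construction:Y^{disc}}) realizes $\pi^{disc}$ as a sequence of blowups at smooth closed points, so every disc $D\neq D_0$ produces an exceptional component $\Gamma_D\cong\mathbb{P}^1_{k_F}$, while $D_0$ corresponds to the positive-genus component $C_0=\mathcal{Y}^{min}_s$. I would first record the two structural inputs that control the answer for \emph{every} $D$: by Lemma \ref{Lemma:Number-of-ireducible-components-after-normalization}(2) the preimage $\varphi^{-1}(\Gamma_D)$ has two irreducible components exactly when $D$ is \"ubereven and one otherwise, and by Lemma \ref{Lemma:Vanishing-order-mod-2-on-Y^{disc}} the parity of $v_\varphi(\Gamma_D)$ equals that of $v_D$, which by Lemma \ref{Lemma:Trivialization-of-the-covering-using-vanishing-order}(1) decides both whether $\Gamma_D$ lies in the branch locus and whether its preimage is reduced. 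These two facts already pin down the number of components and their multiplicities in all seven cases.

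Next I would treat the exceptional discs $D\neq D_0$. For the maximal disc through $\Gamma_D$, Lemma \ref{Lemma:Critical-point-can-be-viewed-as-points-in-hyperelliptic-curve} together with Lemma \ref{Lemma:Locally-analytic-structure-of-critical-points} gives an isomorphism between an \'etale-local neighborhood of $\Gamma_D$ in $\mathcal{X}^{disc}$ and the corresponding neighborhood in the normalized model of the associated hyperelliptic curve $C_\textbf{s}$ of Definition \ref{Definition: hyperelliptic associated to maximal subclusters}; since the local trivializations agree, the double cover $\varphi^{-1}(\Gamma_D)\to\Gamma_D\cong\mathbb{P}^1_{k_F}$ is branched over the same configuration of points. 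Cases (3), (4), (5) and (7) then follow from \cite[Proposition 5.12]{DDMM}: \"ubereven principal discs split into two smooth curves, non-\"ubereven ones stay irreducible, and the auxiliary discs in a chain yield copies of $\mathbb{P}^1_{k_F}$. The nodes recorded in (3)--(5) are precisely the half-integer twin clusters $\textbf{s}'\leq\textbf{s}$ with $d_{\textbf{s}'}=d_\textbf{s}+\tfrac12$; by Lemma \ref{Lemma: Singularity types of X^disc} each such twin produces a node $\mathcal{O}_F[[u,v]]/(uv-\pi_F^r)$, and whether its two analytic branches lie on one self-meeting component or on two distinct components is decided by the component count above, i.e.\ by \"ubereven-ness.

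For case (6), where $v_D$ is odd, Lemma \ref{Lemma:Number-of-ireducible-components-after-normalization}(1) shows $D$ is an odd, non-defining disc, so $\Gamma_D$ lies in the branch locus and $\varphi^{-1}(\Gamma_D)$ is a single ramified copy of $\mathbb{P}^1_{k_F}$ occurring with multiplicity two in $\mathcal{X}^{disc}_s$. The self-intersection $-1$ I would read off from the projection formula \cite[Theorem 9.2.12]{Liu-Alg-Geom} applied to $\varphi^*\Gamma_D$, exactly as in \cite[Lemma 5.14]{DDMM}. The reducedness assertion of the final sentence is then immediate from Lemma \ref{Lemma:Trivialization-of-the-covering-using-vanishing-order}(1): the only non-reduced (multiplicity-two) components among those described are of type (6), where $v_D$ is odd, and all remaining components have $v_D$ even and hence are reduced. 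Transversality of every singularity is the content of Lemma \ref{Lemma: Singularity types of X^disc}.

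It remains to handle the depth-zero disc $D_0$, cases (1) and (2), and this is the genuinely new point, since $\Gamma_{D_0}=C_0$ has genus $g(Y)>0$ and the hyperelliptic local model does not control the global geometry of $C_0$. Here I would argue directly that the number of components of $\varphi^{-1}(C_0)$ is still governed by Lemma \ref{Lemma:Number-of-ireducible-components-after-normalization}(2) — two exactly when $\textbf{s}_0$ is \"ubereven — because that criterion is equivalent to the trivialization at the generic point of $C_0$ being a square (Lemma \ref{Lemma:Trivialization-of-the-covering-using-vanishing-order}(2)), a purely generic condition insensitive to the genus. The depth-$\tfrac12$ twins then contribute nodes by the same local computation as in case (5), joining the single component to itself when $\textbf{s}_0$ is non-\"ubereven and the two components to each other when $\textbf{s}_0$ is \"ubereven, which is precisely the dichotomy (1)--(2). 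The main obstacle throughout is the bookkeeping that transports the local double-cover picture to the global incidence of branches at the half-integer twins; once Lemma \ref{Lemma:Number-of-ireducible-components-after-normalization} fixes the component count over each $\Gamma_D$, this incidence is forced and the verification becomes routine.
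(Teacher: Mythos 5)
Your proposal is correct and follows essentially the same route as the paper: it assembles the statement from Lemma \ref{Lemma:Number-of-ireducible-components-after-normalization} (component counts via \"ubereven-ness), Lemma \ref{Lemma:Vanishing-order-mod-2-on-Y^{disc}} together with Lemma \ref{Lemma:Trivialization-of-the-covering-using-vanishing-order} (multiplicities and reducedness), Lemma \ref{Lemma: Singularity types of X^disc} (nodes at half-integer twins and transversality), the reduction to \cite[Proposition 5.12]{DDMM} via Lemmas \ref{Lemma:Locally-analytic-structure-of-critical-points} and \ref{Lemma:Critical-point-can-be-viewed-as-points-in-hyperelliptic-curve}, and the projection-formula computation of $C.C=-1$ in case (6). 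The only cosmetic difference is that the paper identifies the components in cases (5)--(7) as $\mathbb{P}^1_{k_F}$ by a direct Riemann--Hurwitz genus computation rather than through the hyperelliptic comparison, which changes nothing of substance.
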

    \begin{proof}
        We split the proof of this proposition to the properties of the special fiber of $\mathcal{X}^{disc}_s$. Statements about the singularities of $\mathcal{X}^{disc}_s$ follows from Lemma \ref{Lemma: Singularity types of X^disc}. Statements about the number of components contained in the pre-image of an irreducible component of $\mathcal{Y}^{disc}_s$ follows from Lemma \ref{Lemma:Number-of-ireducible-components-after-normalization}. Statements about the multiplicities of irreducible components follows from Lemma \ref{Lemma:Trivialization-of-the-covering-using-vanishing-order}, Lemma \ref{Lemma:Vanishing-order-mod-2-on-Y^{disc}} and also part (1) of Lemma \ref{Lemma:Number-of-ireducible-components-after-normalization}. Let $D$ be a disc of any of the type (5)-(7). Then, $\varphi^{-1}(\Gamma_D)\cap \mathcal{X}^{disc}_s$ is smooth. Moreover, applying Riemann-Hurwitz (\cite[7.4.16]{Liu-Alg-Geom}) gives that irrecucible component contained in $\varphi^{-1}(\Gamma_D)$ has genus zero. Since we are assuming $k_F$ is algebraically closed, these components are isomorphic to $\mathbb{P}^1_{k_F}.$\par
        Let $D\in \mathcal{D}_{X/Y}$ be a disc of type $(6)$ and define $C:=\varphi^{-1}(\Gamma_D)\cap \mathcal{X}^{disc}_s$. It remains to show that $C.C=-1.$ Let $\Gamma_{D_1},\Gamma_{D_2}$ be the two irreducible components of $\mathcal{Y}^{disc}_s$ intersecting $\Gamma_D.$ As explained in the proof of Lemma \ref{Lemma: Singularity types of X^disc}, for $i=1,2$, $C_i:=\varphi^{-1}(\Gamma_{D_i})\cap \mathcal{X}^{disc}_s$ consists of a single irreducible component. Furthermore, we have $C_i.C=1$. Now since $C.\mathcal{X}_s=0$ ( \cite[9.1.21]{Liu-Alg-Geom}), and since $C$ has multiplicity 2, we get that $C.C=-1.$  
    \end{proof}
    We end this section by determining the reduction type of $X$ using the the data of the cluster picture $\Sigma_{X/Y}.$ \begin{proposition}\label{Proposition:Reduction-type}
        Assume that $\varphi\colon X\rightarrow Y$ is a degree 2 Galois cover of smooth curves satisfying the conditions given in Assumption \ref{Assumption:Main assumption}. Let $\Sigma_{X/Y}$ be the cluster picture of this covering with respect to $\mathcal{Y}^{min}$. Then,
        \begin{enumerate}
            \item $X$ is semi-stable if and only if $\Sigma_{X/Y}$ satisfies the semi-stability criterion given in Theorem \ref{Theorem: Semi-stability-criterion-for 2:1-covers}.
            \item $X$ has good reduction if and only if $F(B)=F$, $v_\varphi$ is even and $\Sigma_{X/Y}$ consists of a single cluster of depth zero.
            \item $X$ has potentially tame semi-stable reduction if and only if $[F(B):F]$ is coprime to $char$ $k_F.$
            \item  $X$ has potentially tame good reduction over $F$ if and only if $\Sigma_{X/Y}$ consists of a single cluster of depth zero. If $v_{\varphi}$ is odd, then $X$ has good reducion over a degree two extension $L/F.$
        \end{enumerate}
    \end{proposition}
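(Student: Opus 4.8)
The plan is to deduce all four statements from the explicit description of $\mathcal{X}^{min}_s$ obtained above together with a base-change analysis of the invariants of Definition \ref{Definition:invariant v}. Statement (1) requires no work: it is exactly Theorem \ref{Theorem: Semi-stability-criterion-for 2:1-covers}, already proved, so I would simply record it.

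For (2) I would characterise good reduction as the condition that $\mathcal{X}^{min}_s$ be a single smooth irreducible curve. In the forward direction, if $\Sigma_{X/Y}$ possessed a proper cluster, then Construction \ref{Construction:Y^{disc}} would force at least one blow-up and hence a second vertical component, and Proposition \ref{Proposition:Description-of-minimal-regular-model} shows that after contracting exceptional curves $\mathcal{X}^{min}_s$ then carries either a component of positive genus or a node; in either case it is not a single smooth curve. Thus good reduction forces $\Sigma_{X/Y}=\{\textbf{s}_0\}$. Because $F=F^{ur}$ has algebraically closed residue field, any branch point defined over a nontrivial extension of $F$ has an inertia-conjugate reducing to the same point of $C_0$, producing a proper cluster; the absence of proper clusters therefore gives $F(B)=F$. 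Finally Lemma \ref{Lemma:Trivialization-of-the-covering-using-vanishing-order}(1) identifies reducedness of $\varphi^{-1}(C_0)$ with $v_\varphi$ even. Conversely, under the three hypotheses $\mathcal{Y}^{min}$ already separates the branch locus into distinct smooth $k_F$-points, $\varphi^{-1}(C_0)$ is reduced, and since the children of $\textbf{s}_0$ are singletons it is not \"ubereven, so Lemma \ref{Lemma:Number-of-ireducible-components-after-normalization}(2) makes it irreducible; Riemann--Hurwitz forces genus zero behaviour and smoothness, giving good reduction.

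For (3) I would record that, since $F=F^{ur}$ has algebraically closed residue field, every finite $L'/F$ is totally ramified, so $L'/F$ is tame precisely when $p:=\mathrm{char}\,k_F$ does not divide $[L':F]$. For ($\Leftarrow$): if $p\nmid[F(B):F]$ then $F(B)/F$ is tame, the branch points become rational over $F(B)$, and the depths $d_\textbf{s}$, which lie in $\tfrac{1}{e}\mathbb{Z}$ with $e=[F(B):F]$, become integral; the remaining conditions of Theorem \ref{Theorem: Semi-stability-criterion-for 2:1-covers} concern only the parity of $v_\varphi$ and of the $v_\textbf{s}$ and are achieved after a single quadratic extension, which is tame as $p\neq 2$. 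The total extension has degree dividing $2[F(B):F]$, prime to $p$, so $X$ is potentially tamely semistable. For ($\Rightarrow$): if $X$ is semistable over a tame $L'/F$, then condition (2) of Theorem \ref{Theorem: Semi-stability-criterion-for 2:1-covers} applied over $L'$ gives $[L'(B):L']\leq 2$, hence $L'(B)/L'$ is tame; as $L'/F$ is tame the tower $L'(B)/F$ is tame, and therefore so is its subextension $F(B)/F$, i.e.\ $p\nmid[F(B):F]$. The bookkeeping of how $d_\textbf{s}$, $v_\textbf{s}$ and $v_\varphi$ rescale under ramified base change is the step I expect to demand the most care.

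Statement (4) would combine (2) with this base-change analysis. The pairwise valuations $v_{\overline F}(r-r')$ of branch points are intrinsic to $B\subset Y(\overline F)$, so the property $\Sigma_{X/Y}=\{\textbf{s}_0\}$ is unchanged by extending $F$ (depths merely rescale, positive staying positive). Hence by (2) applied over a tame extension $L/F$, the curve $X$ has potentially good reduction exactly when, after rationalising the branch locus and correcting the parity of $v_\varphi$, the only cluster is $\textbf{s}_0$, which forces $\Sigma_{X/Y}=\{\textbf{s}_0\}$ already over $F$. When in addition $F(B)=F$ but $v_\varphi$ is odd, a ramified quadratic extension $L/F$ (tame since $p\neq 2$) rescales $v_\varphi$ to $2v_\varphi$, now even, while preserving the single-cluster structure, so (2) over $L$ yields good reduction over a degree-two extension.
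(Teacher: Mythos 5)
Your proposal is correct and follows essentially the same route as the paper's much terser proof: part (1) is the Theorem itself, part (2) is read off from the description of $\mathcal{X}^{min}_s$ together with $g(\mathcal{Y}^{min}_s)>0$, part (3) comes down to the fact that the minimal semi-stabilizing extension has degree $e$, $2e$ (or $e/2$) with $e=[F(B):F]$ and $\mathrm{char}\,k_F$ odd, and part (4) reduces to part (2) plus a ramified quadratic extension doubling $v_\varphi$. Your version simply fills in more of the base-change bookkeeping that the paper leaves implicit.
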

    \begin{proof}
        The first statement is given in Theorem \ref{Theorem: Semi-stability-criterion-for 2:1-covers}. The second statement follows from the description of the special fiber of the minimal regular model of $X$ provided in Proposition \ref{Proposition:Description-of-minimal-regular-model} and the fact that $g(\mathcal{Y}^{min}_s)>0.$ \par
        Let $L$ be a minimal extension of $F$ where $X_L$ is semi-stable. Let $e:=[F(B):F]$ and $\ell:=[L:F].$ By Theorem \ref{Theorem: Semi-stability-criterion-for 2:1-covers}, we observe that $\ell\in \{2e,e\}$ if $e$ is odd, and $\ell\in \{2e,e,\frac{e}{2}\}$ otherwise. In particular, as $char$ $k_F$ is coprime to 2, we have $(char k_F, \ell)=(char$ $k_F, e).$ On the other hand, since we are assuming $k_F$ is an algebraically closed field, $X$ has potentially tame semi-stable reduction if and only if $(char$ $k_F,\ell)=1.$
        \par Assume that $L/F$ is a minimal extension with the property that $X_L$ has good reduction. By part (1), the cluster picture of the cover $\varphi$ over $L$ has a single cluster of depth zero. This implies that the cluster picture associated to $\varphi$ over $F$ also has a single cluster of depth zero. By Theorem \ref{Theorem: Semi-stability-criterion-for 2:1-covers}, if $X$ is not semi-stable, then $v_\varphi$ is odd. For a degree two extension $L/F$, we see that $v_\varphi$ is even over $L$, therefore it has good reduction.
    \end{proof}
    \section{Galois action and the computation of the normalized volume of Hitchin fibers}\label{Section: Galois action}
    Assume that $Y/F$ is a smooth curve, such that $g(Y)>0$ and $Y$ admits a proper smooth model over $F$. Fix a line bundle $\mathcal{L}\in Pic(\mathcal{Y}^{min})$, and let 
    \begin{align*}
                    h:\mathcal{M}_{\mathcal{Y}}^\mathcal{L}&(2,d)\longrightarrow \mathcal{A}_{\mathcal{Y}}^\mathcal{L}(2,d)\\
                &(E,\theta)\longmapsto (Tr(\theta),det(\theta))
    \end{align*}
    denote the Hitchin fibration over the moduli space of rank 2, degree d $\mathcal{L}$-twisted Higgs bundles. For a generic choice of $\alpha,$ spectral correspondence gives an isomorphism 
    \begin{align*}
        h^{-1}(\alpha)\cong Jac(\mathcal{Y}_{\alpha})
    \end{align*}
    where $\mathcal{Y}_{\alpha}\subset Tot(\mathcal{L})$ is a degree two cover of $\mathcal{Y}$ (see Remark \ref{Remark: Spectral correspondence}).
    Define the semi-stable locus $\mathcal{A}_{\mathcal{Y}}^\mathcal{L}(2,d)_{st}\subset \mathcal{A}_{\mathcal{Y}}^\mathcal{L}(2,d)$, to be  the open set consisting of points $\alpha \in \mathcal{A}_{\mathcal{Y}}^\mathcal{L}(2,d)$, where $Y_{\alpha}:=\mathcal{Y}_{\alpha}\times_{Spec(\mathcal{O}_F)} Spec(F)$ admits a semi-stable model over $F$. In this section, we give an algorithm for computing the normalized $p$-$adic$ volume of Hitchin fibers over the semi-stable locus $ \mathcal{A}_{\mathcal{Y}}^\mathcal{L}(2,d)_{st}.$ To do this, we first view $Y_\alpha$ as a degree two Galois cover of $Y$, we denote this cover by $\varphi\colon Y_{\alpha}\rightarrow Y$. The cluster picture, associated to this cover, determines the dual graph of the minimal regular model of $Y_{\alpha}.$ After describing the action of $Frob\in G_F$ on the dual graph $Gr(Y_{\alpha})$ (see Definition \ref{Definition: Dual-Graph}), using the equality given in Equation \ref{Equation: Computation of Tamagawa number}, we can compute Tamagawa number $\vert \upphi(Jac(Y_\alpha)(k_F)\vert$. By Lemma \ref{Lemma: Normalized-volume}, this computation gives the normalized p-adic volume of $h^{-1}(\alpha).$ The missing ingredient in above process, is the description of the Frobenius action on the dual graph $Gr(Y_{\alpha})$, which we explain in this section.
    Note that to construct the cluster picture, we need to consider the base change of a given Galois cover to a choice of a maximally unramified extension (see Remark \ref{Remark: Dual-graph using maximally unramified}). \par Suppose we are given a degree two Galois cover of smooth curves $\varphi\colon X\rightarrow Y$ satisfying the conditions given in Theorem \ref{Theorem: Semi-stability-criterion-for 2:1-covers}. For simplifying the notations, we define $Z:=X_{F^{ur}}$ and $W:=Y_{F^{ur}}.$ By an abuse of notation, we denote the base change of the covering to $F^{ur}$ by $\varphi\colon Z\rightarrow W.$ The action of $Gal(F^{ur}/F)$ on $W$, induces an action on the cluster picture of the covering $\varphi\colon Z\rightarrow W$ by permutation action on the horizontal divisors. This action extends to an action on the collection of admissible discs $\mathcal{D}_{Z/W}$. Moreover, by the universal property of the minimal regular models, the action of $Gal(F^{ur}/F)$ on $W$ extends to an action on $\mathcal{W}^{min}.$ We claim that the action $Gal(F^{ur}/F)$ on $\mathcal{W}^{min}$ extends to an action on $\mathcal{W}^{disc}$. To see this, note that the model $\mathcal{W}^{disc}$ is constructed from $\mathcal{W}^{min}$, by a sequence of blowups centered at Galois orbits of smooth closed points. In particular, the action of $Gal(F^{ur}/F)$ on $\mathcal{W}^{min}$, extends to an action on $\mathcal{W}^{disc}.$ Similarly, $Gal(F^{ur}/F)$ acts on $\mathcal{Z}^{min}$, and this action extends to an action on $\mathcal{Z}^{disc}.$\par
    Let $\varphi\colon \mathcal{Z}^{disc}\rightarrow \mathcal{W}^{disc}$ denote the normalization map. Let $\imath \in Aut(\mathcal{Z}^{disc})$ denote the involution automorphism of $\mathcal{X}^{disc}$, given by the normalization map. Note that the action of $Gal(F^{un}/F)$, commutes with the involution automorphism.\par
    On the other hand, by Remark \ref{Remark: Disk-Component correspondence}, there is a one to one correspondence between discs in $\mathcal{D}_{Z/W}$ and irreducible components of $\mathcal{W}^{disc}_s$. Furthermore, this correspondence is $Gal(F^{ur}/F)$ equivariant. Hence, the action of $Gal(F^{ur}/F)$ on the cluster picture of the covering $\varphi$, determines the induced action of $Gal(F^{ur}/F)$ on the irreducible components of $\mathcal{W}^{disc}_s.$ Therefore, by the discussion above, the action of $Gal(F^{ur}/F)$ on the cluster picture $\Sigma_{Z/W}$, determines the Galois action on involution orbits of the irreducible components of $\mathcal{Z}^{disc}_s$. \par
    Given a disc $D\in \mathcal{D}_{Z/W},$ let $\Gamma_D$ denotes the corresponding irreducible component in in $\mathcal{W}^{disc}_s$. We denote the irreducible components of $\mathcal{Z}^{disc}_s$, contained in $\varphi^{-1}(\Gamma_D)$ by $\Gamma_D^{\prime \pm}$. If there is one component contained in $\varphi^{-1}(\Gamma_D)$, then $\Gamma_D^{\prime +}=\Gamma_D^{\prime -}$. 
    Let 
        \begin{align*}         \mathcal{W}^{disc}\cong\mathcal{W}_n\xrightarrow{\pi_{n}}\mathcal{W}_{n-1}\xrightarrow{\pi_{n-1}}\dots \mathcal{W}_1\xrightarrow{\pi_1}\mathcal{W}_0=\mathcal{W}^{min},
    \end{align*}
    be a decomposition of the birational map $\pi\colon \mathcal{W}^{disc}\rightarrow \mathcal{W}^{min}$ into a sequence of blowups centered at smooth closed points. By Remark \ref{Remark: Disk-Component correspondence}, each blowup map corresponds to a disc $D\in \mathcal{D}_{Z/W}.$ By Lemma \ref{Lemma: Extenstion-of-the-line-bundle-to-the-blowup-map}, we can find collection of line bundles $\mathcal{L}_i$ and sections $s_i\in \mathcal{L}^{\otimes 2}$, satisfying:
    \begin{itemize}
        \item  For each $i\leq n$, let $\varphi_i\colon \mathcal{Z}_i:=(\mathcal{W}_i,K(Z))\rightarrow \mathcal{W}_i$ be the normalization map, then we have $\mathcal{Z}_i\cong S(\mathcal{W}_i,\mathcal{L}_i,s_i)$. 
        \item For any $i \leq n-1$, $\mathcal{L}_{i+1}=\pi_{i+1}^*\mathcal{L}_i$, and $s_{i+1}=\pi_{i+1}^*{s_i}.$ 
    \end{itemize}
    Assume that $\pi_i\colon\mathcal{W}_i\rightarrow \mathcal{W}_{i-1}$ be the blowup corresponding the a defining disc $D(\textbf{s})$ where $\textbf{s}$ is even. Assume that $\pi_i\colon \mathcal{W}_i\rightarrow \mathcal{W}_{i-1}$, is the blowup centered at $w\in \Gamma(P(D)).$ Let $\hat{t}_w\in \hat{\mathcal{O}}_{\mathcal{W}_{i-1},w}$ be the local trivialization of the cover $\varphi_i\colon \mathcal{Z}_i:=N(\mathcal{W}_i,K(W))\rightarrow \mathcal{W}_i$ at  the point $w$. Since $\textbf{s}$ is even, we have that $\hat{t}_w$ is square modulo the maximal ideal of $\hat{\mathcal{O}}_{\mathcal{W}_{i-1},w}$.  We fix $\theta_s\in k(w)=\overline{k_F}$ to be a choice of a square of $\hat{t}_w$. Note that this induces a function
    \begin{align*}
        \theta\colon\{\mbox{\textbf{s}} \in \Sigma_{Z/W}\mid  \mbox{ \textbf{s} is even}&\}\longrightarrow \overline{k}_F,\\
        &\textbf{s}\longmapsto \theta_\textbf{s}.
    \end{align*}
    Now for any even cluster $\textbf{s}\in \Sigma_{Z/W}$, define  
    \begin{align*}
        \epsilon_s\colon Gal(F^{ur}/&F)\longrightarrow \{+1,-1\}\\
        &\sigma\longmapsto \frac{\sigma(\theta_{\textbf{s}})}{\theta_{\sigma(\textbf{s})}}.
    \end{align*}
    Furthermore, for any principal cluster $\textbf{s}\in \Sigma_{Z/W},$ where $\textbf{s}$ in not even, we set $\epsilon_\textbf{s}(\sigma)=1$ for all $\sigma\in Gal(F^{ur}/F).$\par
    Note that for any \"ubereven cluster,  $\theta_\textbf{s}$ determines a choice of an irreducible component in the pre-image of $\varphi^{-1}(\Gamma_{\textbf{s}})$. As we discussed in the beginning of this section, the action of $Gal(F^{ur}/F)$ on the cluster picture $\Sigma_{Z/W}$ determines the action of $Gal(F^{ur}/F)$ on the involution orbits of $\mathcal{Z}^{disc}.$ Given an element of the Galois group $\sigma \in Gal(F^{ur}/F)$, the function $\epsilon_\textbf{-}(\sigma)$ determines whether $\sigma$ permutes the components inside the involution orbits. Now the following theorem, determines the structure of the dual graph $Gr(Z)$, and furthermore describes the action of $Gal(F^{ur}/F)$ on the dual graph. The description of the action of $Gal(F^{ur}/F)$ on $Gr(Z)$ is similar to the description given in \cite[Theorem 8.5]{DDMM}.
    \begin{theorem}\label{Theorem: structure of the dual graph and Galois action}
        Assume that $\varphi\colon X\rightarrow Y$ is a degree two Galois cover of smooth curves over $F$, with $Y$ admitting a smooth proper model and $X$ admitting a semi-stable model. Let $Z=X_{F^{ur}}$ and $W=Y_{F^{ur}}$. Then, the dual graph of $Z$ is structured as follow. Corresponding to each principal cluster in $\textbf{s}\in \Sigma_{Z/W}$ there are one or two vertices in $Gr(Z).$ There are two vertices if $\textbf{s}$ is \"ubereven. We denote this vertices by $v_{\textbf{s}}^{\pm}$, and we set $v_{\textbf{s}}^+=v_{\textbf{s}^{\prime}}^-$ if $\textbf{s}$ is not \"ubereven. These vertices are linked by chain of edges described below.
        \begin{table}[h]
        \begin{tabular}{|l|l|l|l|l|}
            \hline
            name                      & from                        & To                 & length                                    & conditions                                                                   \\ \hline
            $L_{\textbf{s}^{\prime}}$ & $v_{\textbf{s}^{\prime}}$   & $v_\textbf{s}$     & $\frac{1}{2}\delta_{\textbf{s}^{\prime}}$ & $\textbf{s}^{\prime}<\textbf{s}$, both principal, $\textbf{s}^{\prime}$ odd  \\ \hline
            $L_{\textbf{s}^{\prime}}$ & $v_{\textbf{s}^{\prime}}^+$ & $v_\textbf{s}^+$   & $\delta_{\textbf{s}^{\prime}}$            & $\textbf{s}^{\prime}<\textbf{s}$, both principal, $\textbf{s}^{\prime}$ even \\ \hline
            $L_{\textbf{s}^{\prime}}$ & $v_{\textbf{s}^{\prime}}^-$ & $v_\textbf{s}^-$   & $\delta_{\textbf{s}^{\prime}}$            & $\textbf{s}^{\prime}<\textbf{s}$, both principal, $\textbf{s}^{\prime}$ even \\ \hline
            $L_\textbf{t}$            & $v_\textbf{s}^{-}$          & $v_\textbf{s}^{+}$ & $2\delta_\textbf{t}$                      & $\textbf{s}$ principal, $\textbf{t}<\textbf{s}$ twin                         \\ \hline
        \end{tabular}
        \end{table}
        Furthermore, given $\sigma\in Gal(F^{ur}/F)$, $\sigma$ acts on $Gr(Z)$ as follow.
    \begin{enumerate}
        \item $\sigma(v_{\textbf{s}^{\prime}}^{\pm})=v_{\sigma(\textbf{s})}^{\pm\epsilon_\textbf{s}(\sigma)}$,
        \item $\sigma(L_{\textbf{s}^{\pm}})=L_{\sigma(\textbf{s})}^{\pm\epsilon_\textbf{s}(\sigma)}$
        \item For $\textbf{t}$ twin, $\sigma(L_\textbf{t})=\epsilon_{\textbf{t}}(\sigma)L_{\sigma(\textbf{t})},$ where $-L_{\textbf{t}}$ indicates the negative orientation of this chain of edges.
    \end{enumerate}
    \end{theorem}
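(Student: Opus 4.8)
The plan is to read off both the combinatorial shape of $Gr(Z)$ and the Galois action directly from the explicit model $\mathcal{Z}^{disc}$ already analyzed in Proposition \ref{Proposition:Description-of-minimal-regular-model}, passing to the minimal model by contraction and then transporting the local picture at each critical point to the hyperelliptic situation treated in \cite[Theorem 8.5]{DDMM}.

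First I would establish the vertex and edge structure. Proposition \ref{Proposition:Description-of-minimal-regular-model} lists every irreducible component of $(\mathcal{Z}^{disc})_s$ together with its multiplicity and self-intersection, and Lemma \ref{Lemma: Singularity types of X^disc} guarantees that after contracting the type-(6) components --- exactly the reduced $\mathbb{P}^1$'s of self-intersection $-1$ lying over discs $D$ with $v_D$ odd --- the result is the minimal regular model $\mathcal{Z}^{min}$ (here $g(Y)>0$ is what makes $\mathcal{Z}^{min}$ well defined and forces these to be the only exceptional curves). The surviving components over a principal cluster $\textbf{s}$ are, by Lemma \ref{Lemma:Number-of-ireducible-components-after-normalization}, a single curve unless $\textbf{s}$ is \"ubereven, in which case there are two; these give the vertices $v_\textbf{s}$, respectively $v_\textbf{s}^{\pm}$. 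The chains of $\mathbb{P}^1$'s over the intermediate integral discs between a principal child $\textbf{s}^{\prime}$ and its principal parent $\textbf{s}$ become, after contraction, the edge-chains $L_{\textbf{s}^{\prime}}$; their lengths are computed from the relative depths $\delta$, and the dichotomy $\tfrac{1}{2}\delta_{\textbf{s}^{\prime}}$ versus $\delta_{\textbf{s}^{\prime}}$ reflects whether the chain is ramified ($\textbf{s}^{\prime}$ odd) or splits into two parallel chains ($\textbf{s}^{\prime}$ even). The loops $L_\textbf{t}$ of length $2\delta_\textbf{t}$ joining $v_\textbf{s}^{-}$ to $v_\textbf{s}^{+}$ come from the twin children of depth $d_\textbf{s}+\tfrac{1}{2}$, whose preimage meets a single component in two points as in Proposition \ref{Proposition:Description-of-minimal-regular-model}. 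Since these length computations are local, Lemma \ref{Lemma:Critical-point-can-be-viewed-as-points-in-hyperelliptic-curve} together with \cite[Proposition 5.12]{DDMM} reduces them verbatim to the hyperelliptic computation.

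Next I would describe the Galois action. The group $Gal(F^{ur}/F)$ permutes the horizontal divisors $B(L)$, hence acts on $\Sigma_{Z/W}$ and on $\mathcal{D}_{Z/W}$; by the $Gal$-equivariance of the disc--component correspondence (Remark \ref{Remark: Disk-Component correspondence}) this already pins down the action on the components $\Gamma_D$ of $\mathcal{W}^{disc}_s$, and therefore on the involution-orbits of components of $\mathcal{Z}^{disc}_s$, giving $\sigma(v_\textbf{s})=v_{\sigma(\textbf{s})}$ up to the two-fold ambiguity over \"ubereven discs. The entire content is then to resolve that ambiguity, i.e.\ to decide for each \"ubereven $\textbf{s}$ whether $\sigma$ fixes or swaps $\Gamma_D^{\prime +}$ and $\Gamma_D^{\prime -}$. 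This is exactly what the square root $\theta_\textbf{s}$ of the local trivialization $\hat{t}_w$ records: the choice of $\theta_\textbf{s}$ labels one of the two sheets, so comparing $\sigma(\theta_\textbf{s})$ with the chosen square root $\theta_{\sigma(\textbf{s})}$ over $\sigma(\textbf{s})$ produces the sign $\epsilon_\textbf{s}(\sigma)=\sigma(\theta_\textbf{s})/\theta_{\sigma(\textbf{s})}$, yielding formula (1). Formula (2) is then functoriality of the edge-chains under this action, and formula (3) follows because swapping $v_\textbf{s}^{+}$ with $v_\textbf{s}^{-}$ reverses the orientation of the loop $L_\textbf{t}$, so the orientation is negated exactly when $\epsilon_\textbf{t}(\sigma)=-1$.

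The main obstacle is the sign bookkeeping in the Galois action at \"ubereven discs and twins: one must verify that $\theta_\textbf{s}$ genuinely labels a single component $\Gamma_D^{\prime +}$ in a way compatible with the normalization, that $\epsilon_\textbf{s}(\sigma)$ is independent of the auxiliary choices, and that the local identifications assemble into a globally consistent action on $Gr(Z)$. Here the transport to hyperelliptic curves via Lemma \ref{Lemma:Locally-analytic-structure-of-critical-points} and Lemma \ref{Lemma:Critical-point-can-be-viewed-as-points-in-hyperelliptic-curve} is essential: it lets me import the local statement from \cite[Theorem 8.5]{DDMM} at each critical point, after which the global assembly is governed by the tree $T_{Z/W}$ and the equivariance recorded above.
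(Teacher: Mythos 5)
Your proposal is correct and follows essentially the same route as the paper: the paper's own proof simply cites Proposition \ref{Proposition:Description-of-minimal-regular-model} for the structure of $(\mathcal{Z}^{disc})_s$ and defers the Galois-action bookkeeping to the analogous argument in \cite[Theorem 8.5]{DDMM}, which is exactly what you do, with the local transport via Lemma \ref{Lemma:Critical-point-can-be-viewed-as-points-in-hyperelliptic-curve} and the sign function $\epsilon_{\textbf{s}}$ resolving the sheet ambiguity over \"ubereven clusters. Your write-up is in fact more detailed than the paper's, which states only that the result ``is similar to the proof of \cite[Theorem 8.5]{DDMM} and it follows from Proposition \ref{Proposition:Description-of-minimal-regular-model}.''
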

    \begin{proof}
        The proof of this theorem is similar to the proof of \cite[Theorem 8.5]{DDMM} and it follows from Propostion \ref{Proposition:Description-of-minimal-regular-model}.
    \end{proof}

\end{document}